\newcommand{\R}[1]{\mathbb{R}^{#1}}
\newcommand{\cP}{\mathcal P}
\newcommand{\cR}{\mathcal R}
\newcommand{\ds}{\de s}
\newcommand{\dsigma}{\de\sigma}
\newcommand{\rc}{\mathrm c}
\newcommand{\de}{\mathrm d}
\newcommand{\fin}{\mathrm{fin}}
\newcommand{\init}{\mathrm{in}}
\newcommand{\rig}{\mathrm{right}}
\newcommand{\sh}{\mathrm{sh}}
\newcommand{\cl}[1]{\overline{#1}}
\newcommand{\eps}{\varepsilon}
\newcommand{\norm}[1]{\left\lvert#1\right\rvert}
\newcommand{\norma}[1]{\left\lVert#1\right\rVert}
\newcommand{\scp}[2]{\left\langle#1,#2\right\rangle}
\newcommand{\Cl}[2]{C^{#1}_{#2}}
\newcommand{\Hp}[2]{H^{#1}_{#2}}
\newcommand{\Lp}[2]{L^{#1}_{#2}}
\newcommand{\spt}{\operatorname{spt}}
\newcommand{\sign}{\operatorname{sign}}
\DeclareMathOperator*{\supess}{ess\,sup}
\renewcommand{\geq}{\geqslant}
\renewcommand{\leq}{\leqslant}
\newcommand{\wto}{\rightharpoonup}
\newcommand{\be}{\begin{equation}}
\newcommand{\ee}{\end{equation}}
\renewcommand{\vec}[2]{\left(\begin{array}{c}
                    #1 \\
                    #2
                    \end{array}\right)}
\newcommand{\mat}[4]{\left[\begin{array}{cc}
                    #1 & #2 \\
                    #3 & #4
                    \end{array}\right]}
\mathchardef\emptyset="001F
\numberwithin{equation}{section}
\newtheorem{defin}{Definition}[section]
\newtheorem{theorem}[defin]{Theorem}
\newtheorem{lemma}[defin]{Lemma}
\newtheorem{proposition}[defin]{Proposition}
\theoremstyle{definition} {\newtheorem{remark}[defin]{Remark}}
\title[One-dimensional swimmers: dynamics, controllability, and optimal controls]{One-dimensional swimmers in viscous fluids:\\ dynamics, controllability, and existence\\ of optimal controls}%
\author{Gianni Dal Maso}
\address{SISSA\\ Via Bonomea, 265\\ I--34136 Trieste, Italy}
\email[G.~Dal Maso]{dalmaso@sissa.it}
\author{Antonio DeSimone}
\address{SISSA\\ Via Bonomea, 265\\ I--34136 Trieste, Italy}
\email[A.~DeSimone]{desimone@sissa.it}
\author{Marco Morandotti}
\address{Departamento de Matem\'atica \\ Instituto Superior T\'ecnico \\ Av.\@ Rovisco Pais, 1 \\ 1049-001 Lisboa, Portugal}
\email[M.~Morandotti]{marco.morandotti@tecnico.ulisboa.pt}
\date{19 April 2014}
\begin{document}

\begin{abstract}
In this paper we study a mathematical model of  one-dimensional swimmers performing a planar motion while fully immersed in a viscous fluid. 
The swimmers are assumed to be of small size, and all inertial effects are neglected.
Hydrodynamic interactions are treated in a simplified way, using the local drag approximation of resistive force theory. We prove existence and uniqueness of the solution of the equations of motion driven by shape changes of the swimmer.
Moreover, we prove a controllability result showing that given any pair of initial and final states, there exists a history of shape changes such that the resulting motion takes the swimmer from the initial to the final state. 
We give a constructive proof, based on the composition of elementary maneuvers (straightening and its inverse, rotation, translation), each of which represents the solution of an interesting motion planning problem.
Finally, we prove the existence of solutions for the optimal control problem of finding, among the histories of shape changes taking the swimmer from an initial to a final state, the one of minimal energetic cost.
\end{abstract}
\maketitle%
{\small
\keywords{\noindent {\bf Keywords:} {motion in viscous fluids, fluid-solid interaction, micro-swimmers, resistive force theory, controllability, optimal control.}
}

\bigskip
\subjclass{\noindent {\bf {2010}
Mathematics Subject Classification:}
{%Primary  
Primary 76Z10;	%Biopropulsion in water and in air
 %Secondary
Secondary 74F10,  	%Fluid-solid interactions (including aero- and hydro-elasticity, porosity, etc.)
 49J21,  	%Optimal control problems involving relations other than differential equations (sotto Calculus of variations and optimal control; optimization  - existence theories)
 93B05.  	%Controllability
 }}
}\bigskip
\bigskip

\tableofcontents%

\section{Introduction}

\par In this paper we study the self-propelled planar motions of a one-dimensional swimmer in an infinite viscous three-dimensional fluid. 
We are interested in the swimming strategies of small organisms that achieve self-propulsion by propagating bending waves along their slender bodies (such as, for instance, sperm cells and \emph{Caenorhabditis elegans}). At these length scales, viscosity dominates over inertia: accordingly, we ignore all inertial effects in our analysis.

The study of the self-propulsion strategies of microscopic living organisms is attracting increasing attention, starting from  seminal works by 
Taylor \cite{Taylor1951}, 
Lighthill \cite{Lighthill1952}, 
Purcell \cite{Purcell77}, and 
Childress \cite{Childress1981}. We refer the reader to the recent review \cite{LP09} for a comprehensive list of references. Among the recent mathematical contributions we quote \cite{KoillerEhlersMontgomery1996, Galdi1999, SanMartinTakahashiTucsnak2007,Bressan2008,ADSL2008,ChambrionMunnier2011,AHMDS2012}. 
Many of these papers approach swimming problems within the framework of control theory, and this is exploited in 
\cite{ADSH2011,ADSL2009} for the numerical computation of energetically optimal strokes.
While the connection between swimming and control theory is very natural, only recently has this point of view started to emerge and become widely appreciated, see \cite{DSHAL12} and the other chapters in the same volume.

\par When inertial forces are neglected, and external forces such as gravity are not present (neutrally buoyant swimmers), 
the equations of motion for a swimmer become the statements that the total viscous force and torque exerted by the surrounding fluid vanish. 
In order to take advantage of the simplifications deriving from the special one-dimensional geometry of our swimmers, 
we adopt here the local drag approximation of   \emph{Resistive Force Theory}, first introduced in \cite{GH1955}, then also used in \cite{PK74}, and further discussed in \cite{JohnsonBrokaw1979}.
It is a classical and popular theory widely spread among biological fluid dynamicists, 
which has recently been proved to be accurate and robust in the study of the motion of one-dimensional bodies in the length scales and regimes we are interested in, 
as it is shown, e.g., in \cite{FRKHJ2010}.
According to resistive force theory,  the external fluid exerts on the swimmer a viscous force per unit length which, at each point of the swimmer, 
is proportional to the local tangential and normal velocities \emph{at that point}, 
through positive resistance coefficients denoted by $C_\tau$ and $C_\nu$, respectively.

\par For every $t$ in the time interval $[0,T]$, let $s\mapsto\chi(s,t)$ be the parametrization of the swimmer position 
with respect to an absolute external reference frame (\emph{lab frame}), where $s\in[0,L]$ is the arc length parameter. 
It is possible to factorize this function as $\chi(s,t)=r(t)\circ\xi(s,t)$, 
where $r(t)$ is a time dependent rigid motion and $s\mapsto\xi(s,t)$ describes the shape of the swimmer at time $t$ 
with respect to a reference system moving with the swimmer (\emph{body frame}).

\par We suppose that the shape function $\xi$ is given. 
The first problem we address in this paper is to determine the rigid motion $t\mapsto r(t)$
that results from a prescribed time history of shape changes $t\mapsto\xi(s,t)$.  
This is obtained by imposing that $\chi=r\circ\xi$ satisfies the equations of motion (the resultant of viscous forces and torques generated by the interaction between the swimmer and the fluid vanish for every $t$) and solving the resulting force and torque balance for $r$ in terms of the given $\xi$.

\par Our main result on this first problem is that, if $\xi$ satisfies suitable regularity conditions which are listed in the hypotheses of Theorem \ref{0001}, 
then the rigid motion $r(t)$ can be determined as the unique solution of  a system of ordinary differential equations in the independent variable $t$. 
Therefore, for every initial condition $r_0$, there exists a unique $r(t)$ such that the resulting function $\chi(s,t)=r(t)\circ\xi(s,t)$ satisfies the force and torque balance.
In other words, Theorem \ref{0001} states that looking for a motion that satisfies the force and torque balance is equivalent to assigning the shape function and solving the equations of motion.

\par The second problem we address in this paper is that of controllability. Given a time interval $[0,T]$ and 
arbitrary initial and final states of the swimmer described by
the arc length parametrizations $s\mapsto\chi_\init(s)$ and $s\mapsto\chi_\fin(s)$, can we find a self-propelled motion $\chi(s,t)$ in the lab frame such that $\chi(s,0)=\chi_\init(s)$ and $\chi(s,T)=\chi_\fin(s)$\,?
By a self-propelled motion we mean one such that the equations of motion are satisfied which, in the case of self-propulsion, reduce to the vanishing of the total viscous force and torque. 
The answer is affirmative and is contained in Theorem \ref{300}. 
Our proof is constructive. 
Indeed, we exhibit an explicit procedure to transfer $\chi_\init$ onto $\chi_\fin$ based on the composition of elementary maneuvers: straightening of a curved configuration and the corresponding inverse maneuver (i.e., how to map a straight segment onto a given curved configuration), rotation of a straight segment around its barycenter, translation of a segment along its axis. 
Solving the motion planning problem for these elementary maneuvers is interesting in its own right, independently of the general controllability result, and this is done in Section \ref{controllability}.

\par More in detail, given two configurations $\chi_\init$ in $\chi_\fin$, we show how  to straighten them in a segment-like configuration, say $\Sigma_\init$ and $\Sigma_\fin$, respectively, thanks to Theorem \ref{0001}.
Then we show how to transfer $\Sigma_\init$ into $\Sigma_\fin$, by explicitly constructing a way to make a rectilinear swimmer to translate (without rotating) along its axis, see Section \ref{translation}, and a way to make it rotate (without translating) about its barycenter,  see Section \ref{rotation}.
These constructions use suitable bending wave forms that propagate along the body of the swimmer.

\par It is interesting to notice, and this will be clear in Section \ref{controllability}, 
that a very convenient way to describe such transformations is by using the angle that the tangent of the swimmer makes with the positive horizontal axis. 
This angle is given as a function of the time $t$ and of the arc length parameter $s$.
This agrees with the traditional approach of prescribing the curvature function, 
since the latter can be recovered by differentiating the angle with respect to~$s$ (see Remark \ref{curv}). 
This classical approach is motivated by the fact that the swimmers we are interested in accomplish the shape changes required for force and torque balance by relative sliding of filaments along their ``spine'', hence inducing local curvature changes.

\par The last problem we address is the existence of an energetically optimal swimming strategy. 
Here again we rely crucially on the simplification yielded by Resistive Force Theory since obtaining a similar result when the fluid-swimmer interaction is modeled by the Stokes equations is much more involved.
In Theorem \ref{010} we prove that, under suitable conditions, there exists a self-propelled motion $\chi(s,t)$ minimizing the power expended. 
The key hypothesis is a  sort of non-interpenetra\-tion condition for the enlarged body obtained by thickening the curve describing the swimmer to a tube of constant thickness. 
This condition rules out self-intersections of the swimmer and yields an a-priori bound on its curvature.

\section{Mathematical statement of the problem}

\par In this section we describe the mathematical setting for studying the swimming problem by adapting  to our specific case of a one-dimensional swimmer with a local fluid-swimmer interaction the framework introduced and described in \cite{DMDSM11,M11}.
\par Throughout the paper we fix $L>0$ to be the length of the swimmer and $T>0$ so that $[0,T]$ is the time interval in which the motion occurs. We study planar motions in three dimensions, and therefore the position of each material point of the swimmer will be described by a function $\chi\colon[0,L]{\times}[0,T]\to\R2$, where $s\in[0,L]$ is the arc length parameter; this request means that for every $t$ the map $s\mapsto\chi(s,t)$ is Lipschitz continuous from $[0,L]$ to $\R2$ and $\norm{\chi'(s,t)}\equiv1$, where $\chi':=\partial\chi/\partial s$. As for the derivative with respect to $t$, $\dot\chi:=\partial\chi/\partial t$ is intended in the distributional sense as the object that makes the following equality hold true
$$\int_0^L\!\!\int_0^T \dot\chi(s,t)\varphi(s,t)\,\de s\de t=-\int_0^L\!\!\int_0^T \chi(s,t)\frac{\partial\varphi(s,t)}{\partial t}\,\de s\de t\,,$$
for every $\varphi\in\Cl\infty\rc((0,L){\times}(0,T))$.

\par We now introduce the local expressions for the line densities $f(s,t)$ and $m(s,t)$ of viscous force and torque, as dictated by resistive force theory. Since $f(s,t)$ lies in the plane of the motion, $m(s,t)$ is orthogonal to it and is identified with a scalar. They are given by
\begin{equation}\label{104}
\begin{split}
f(s,t)=&\;-[C_\tau\, \dot\chi_\tau(s,t)\,\chi'(s,t)+C_\nu\, \dot\chi_\nu(s,t)\,J\chi'(s,t)]=-K_\chi(s,t)\dot\chi(s,t)
, \\
m(s,t) =&\;\scp{f(s,t)}{J\chi(s,t)}=-\scp{C_\tau\, \dot\chi_\tau(s,t)\,\chi'(s,t)+C_\nu\, \dot\chi_\nu(s,t)\, J\chi'(s,t)}{J\chi(s,t)}\\
=&-\scp{K_\chi(s,t)\dot\chi(s,t)}{J\chi(s,t)}.
\end{split}
\end{equation}
Here, $C_\tau$ and $C_\nu$ are positive constants, $\dot\chi_\tau$ and $\dot\chi_\nu$ are the tangential and normal components of the velocity $\dot\chi$, i.e., $\dot\chi_\tau(s,t)=\langle\dot\chi(s,t),$ $\chi'(s,t)\rangle$ and $\dot\chi_\nu(s,t)=\scp{\dot\chi(s,t)}{J\chi'(s,t)}$, while $J=\mat0{-1}10$ is the counter-clockwise rotation matrix of angle $\pi/2$ and
\begin{equation}\label{0000}
K_\chi(s,t):=C_\tau\chi'(s,t)\otimes\chi'(s,t)+C_\nu(J\chi'(s,t))\otimes(J\chi'(s,t)),
\end{equation}
where for any two vectors $a,b\in\R2$ the matrix $a\otimes b$ is defined by $(a\otimes b)_{ij}=a_ib_j$\,.
The force and torque balance can be written as
\begin{subequations}\label{105}
\begin{eqnarray}
0=F(t) &\!\!\!\!:=&\!\!\!\! \int_0^L f(s,t)\,\de s=-\int_0^L K_\chi(s,t)\dot\chi(s,t)\,\de s, \label{105a}\\
0=M(t) &\!\!\!\!:=&\!\!\!\! \int_0^L m(s,t)\,\de s=-\int_0^L \scp{K_\chi(s,t)\dot\chi(s,t)}{J\chi(s,t)}\de s, \label{105b}
\end{eqnarray}
\end{subequations}
for a.e. $t\in[0,T]$.

\begin{remark}\label{rem}
An important remark on the structure of the viscous force and torque is in order, leading to a rate independence property. 
Let $\varphi$ be a $\Cl1{}$ strictly increasing function with $\varphi'(t)>0$ for every $t\in[0,T]$. 
Then a rescaling in time by $\varphi$ has no consequences on the equations of motion. 
Indeed, we prove that if $\chi(s,t)$ satisfies the force and torque balance \eqref{105}, then also $(\chi\circ\varphi)(s,t):=\chi(s,\varphi(t))$ does. 
Let us rewrite the force \eqref{105a} as $F_\chi(t)=-\int_0^L K_\chi(s,t)\dot\chi(s,t)\,\de s$. 
Then we have
\begin{equation*}
\begin{split}
F_{\chi\circ\varphi}(t)= & -\int_0^L K_{\chi\circ\varphi}(s,\varphi(t))\dot\chi(s,\varphi(t))\dot\varphi(t)\,\de s\\
=& -\dot\varphi(t)\int_0^L K_\chi(s,\varphi(t))\dot\chi(s,\varphi(t))\,\de s=\dot\varphi(t)F_\chi(\varphi(t))=0,
\end{split}
\end{equation*}
since $F_\chi(\varphi(t))=0$ for a.e.\ $t\in[0,T]$. The same can be obtained for the torque $M_{\chi\circ\varphi}(t)$.

\par This rate independece character is at the root of the celebrated Scallop Theorem, see \cite{Purcell77}.
\end{remark}

\par We conclude this section by introducing a function space $X$ containing our state functions, as well as the shape functions:
\begin{equation}\label{308}
X:=\{\chi\colon [0,L]{\times}[0,T]\to\R2:  \chi\in\Lp\infty{}(0,T;\Hp2{}(0,L)),\,
\dot\chi\in\Lp2{}(0,T;\Lp2{}(0,L))\},
\end{equation}
endowed with the norm
\begin{equation}\label{310}
\norma\chi_X:=\supess_{0\leq t\leq T}\norma{\chi(\cdot,t)}_{\Hp2{}(0,L)}+\left(\int_0^T \lVert\dot\chi(\cdot,t)\rVert_{\Lp2{}(0,L)}^2\,\de t\right)^{1/2},
\end{equation}
which makes it a Banach space. It follows from the definition that
\begin{equation}\label{3101}
X\subset \Hp1{}(0,T;\Lp2{}(0,L)) \text{ with continuous embedding}.
\end{equation}
Since every function $\chi$ in $\Hp1{}(0,T;\Lp2{}(0,L))$ can be modified on a negligible subset of $[0,T]$ so that $t\mapsto \chi(\cdot,t)$ is strongly continuous from $[0,T]$ into $\Lp2{}(0,L)$, we shall always refer to this modified function when we consider the properties of $\chi(\cdot,t)$ for some $t\in[0,T]$. With this convention we have
\begin{equation}\label{980}
X\subset \Cl0{}([0,T];\Lp2{}(0,L)) \text{ with continuous embedding}.
\end{equation}
The following proposition shows the main properties of the space $X$.
\begin{proposition}
Let $\chi\in X$. Then for every $t\in[0,T]$ we have $\chi(\cdot,t)\in\Hp2{}(0,L)$ and
\begin{equation}\label{833}
\norma{\chi(\cdot,t)}_{\Hp2{}(0,L)}\leq\norma\chi_X.
\end{equation}
Moreover, the function $t\mapsto\chi(\cdot,t)$ is continuous with respect to the weak topology of $\Hp2{}(0,L)$. Finally,
\begin{eqnarray}
\label{3103}
&\chi\in \Cl0{}([0,T];\Cl1{}([0,L])),
\\
\label{834}
&\norma\chi_{\Cl0{}([0,T];\Cl1{}([0,L]))}\leq C\norma\chi_X\,,
\end{eqnarray}where the constant $C$ is independent of $\chi$.
\end{proposition}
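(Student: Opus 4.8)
The plan is to exploit the defining embeddings \eqref{3101} and \eqref{980} together with a weak-compactness argument to establish the pointwise-in-time regularity, and then to upgrade this to continuity in the appropriate topologies via standard Sobolev embedding in the space variable $s$. First I would note that \eqref{833} is essentially a restatement of the fact that the essential supremum in the definition \eqref{310} of $\norma{\cdot}_X$ is attained, after modification on a null set, at \emph{every} $t$: since $\chi\in\Lp\infty{}(0,T;\Hp2{}(0,L))$, there is a null set $N\subset[0,T]$ such that $\norma{\chi(\cdot,t)}_{\Hp2{}(0,L)}\leq\norma\chi_X$ for all $t\notin N$; for $t\in N$ one picks a sequence $t_k\to t$ with $t_k\notin N$ and uses that $\chi(\cdot,t_k)$ is bounded in $\Hp2{}(0,L)$, hence has a weakly convergent subsequence, whose limit must coincide with $\chi(\cdot,t)$ because $\chi(\cdot,t_k)\to\chi(\cdot,t)$ strongly in $\Lp2{}(0,L)$ by \eqref{980}. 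Weak lower semicontinuity of the $\Hp2{}$-norm then gives \eqref{833} for every $t$.

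Next, for the weak continuity of $t\mapsto\chi(\cdot,t)$ in $\Hp2{}(0,L)$: fix $t_k\to t$ in $[0,T]$. By \eqref{833} the sequence $\chi(\cdot,t_k)$ is bounded in $\Hp2{}(0,L)$, so every subsequence has a further subsequence converging weakly in $\Hp2{}(0,L)$; by \eqref{980} the limit is forced to be $\chi(\cdot,t)$. Since the limit is independent of the subsequence, the whole sequence converges weakly to $\chi(\cdot,t)$, which is the claim. For \eqref{3103} and \eqref{834}, I would invoke the one-dimensional Sobolev embedding $\Hp2{}(0,L)\hookrightarrow\Cl1{}([0,L])$, which is compact; combined with \eqref{833} this gives the uniform bound $\norma{\chi(\cdot,t)}_{\Cl1{}([0,L])}\leq C\norma\chi_X$ for every $t$, hence the pointwise estimate in \eqref{834} up to taking the supremum over $t$. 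Continuity of $t\mapsto\chi(\cdot,t)$ into $\Cl1{}([0,L])$ then follows from the weak $\Hp2{}$-continuity just proved plus the \emph{compactness} of the embedding: if $t_k\to t$, the weak $\Hp2{}$-limit is $\chi(\cdot,t)$, and compactness promotes this to strong convergence in $\Cl1{}([0,L])$ (again via the subsequence-of-subsequence argument, since the strong limit along any subsequence must agree with the weak $\Hp2{}$-limit).

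The main obstacle, such as it is, lies in the first step: one must be careful that the "good" representative of $\chi$ — already fixed by the convention following \eqref{980} so that $t\mapsto\chi(\cdot,t)$ is $\Lp2{}$-continuous — is the one for which \eqref{833} holds at every $t$, rather than merely almost every $t$. The point is that these two representatives coincide, because an $\Lp2{}$-continuous function is determined by its values on any dense set, in particular on the complement of the null set where the $\Lp\infty{}(0,T;\Hp2{})$ bound is valid. Once this identification is made, the rest is a routine deployment of reflexivity of $\Hp2{}$, weak lower semicontinuity of norms, and compactness of $\Hp2{}(0,L)\hookrightarrow\Cl1{}([0,L])$ in one space dimension; no new estimate is required beyond what is already encoded in the definition of $X$.
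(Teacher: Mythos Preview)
Your proposal is correct and follows essentially the same approach as the paper: approximate an arbitrary $t$ by a sequence $t_k$ outside the null set where the $\Lp\infty{}(0,T;\Hp2{})$ bound fails, use the $\Lp2{}$-continuity from \eqref{980} to identify the weak limit, invoke weak lower semicontinuity of the $\Hp2{}$-norm for \eqref{833}, and then upgrade weak $\Hp2{}$-continuity to strong $\Cl1{}$-continuity via the compact embedding $\Hp2{}(0,L)\hookrightarrow\Cl1{}([0,L])$. Your treatment is slightly more explicit about the subsequence-of-subsequence argument and the representative issue, but the logical skeleton is the same.
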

\begin{proof}
To prove the first claim, let us fix $t\in[0,T]$ and let $N$ be a zero measure set up to which the essential supremum in \eqref{310} is actually a supremum. Consider a sequence $t_n\notin N$ converging to $t$, so that $\norma{\chi(\cdot,t_n)}_{\Hp2{}(0,L)}\leq\norma\chi_X$\,.
Since $\chi(\cdot,t_n)\to\chi(\cdot,t)$ in $L^2([0,L])$ by \eqref{980}, we have that $\chi(\cdot,t)\in\Hp2{}(0,L)$. Moreover, since the $\Hp2{}$ norm is lower-semicontinuous, we have also
$$
\norma{\chi(\cdot,t)}_{\Hp2{}(0,L)}\leq\liminf_{n\to+\infty}\norma{\chi(\cdot,t_n)}_{\Hp2{}(0,L)}\leq\norma\chi_X,
$$
which proves \eqref{833}. Thanks to this inequality, the strong continuity of the function $t\mapsto\chi(\cdot,t)$ in $\Lp2{}([0,L])$ implies the weak continuity in $\Hp2{}(0,L)$. Then the compact embedding of $H^2(0,L)$ into $C^1([0,L])$ implies that the function $t\mapsto\chi(\cdot,t)$ is continuous with respect to the strong topology of $C^1([0,L])$, which gives \eqref{3103}.
Finally, \eqref{834} follows from \eqref{833} and from the continuous embedding of $\Hp2{}(0,L)$ into $\Cl1{}([0,L])$.
\end{proof}

Note that, by \eqref{3103}, for every $\chi\in X$ we have
\begin{equation}\label{8360}
\chi,\,\chi'\in \Cl0{}([0,L]{\times}[0,T]).
\end{equation}
We are interested only in functions $\chi\in X$ such that $s\mapsto\chi(s,t)$ is the arc length parametrization of a curve; this leads to the following definition
\begin{equation}\label{836}
X_1:=\{\chi\in X:\norm{\chi'}=1\text{ in $[0,L]{\times}[0,T]$}\}.
\end{equation}

\section{Equations of motion}
\par In this section we derive the equations of motion for the swimmer. 
It is convenient to factorize the function $\chi\in X_1$ as the composition of a time dependent rigid motion $r$, which represents the change of location, with a function  $\xi\in X_1$, which represents the change of shape.
We write
\begin{equation}\label{1001}
\chi(s,t)=x(t)+R(t)\xi(s,t),
\end{equation}
where $x(t)\in \R2$ is the translation vector and $R(t)\in \R{2{\times}2}$ is the rotation corresponding to the rigid motion $r(t)$. 

\par If we assume that 
$\int_0^L \xi(s,t)\,\de s=0$ for every $t\in[0.T]$, then $x(t)$ coincides with the barycenter of the curve $\chi(\cdot,t)$, which describes the swimmer at time $t$ with respect to the absolute reference system, while
the function $\xi(\cdot,t)$ will be regarded as the deformation seen by an observer moving with barycenter of the swimmer. 

\begin{proposition}\label{752}
Let $\xi\in X_1$ and let $x\colon [0,T]\to\R2$ and $R\colon [0,T]\to\R{2{\times}2}$ be functions such that $R(t)$ is a rotation for every $t\in[0,T]$. Then the following properties are equivalent:
\begin{itemize}
\item[(i)] the function $\chi$ defined by \eqref{1001} belongs to $X_1$\,;
\item[(ii)] the functions $x$ and $R$ belong to $\Hp1{}(0,T)$.
\end{itemize}
\end{proposition}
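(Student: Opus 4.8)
The plan is to prove the equivalence by exploiting the decomposition $\chi(s,t)=x(t)+R(t)\xi(s,t)$ and tracking how the regularity in $t$ of $x$ and $R$ interacts with the regularity of $\xi\in X_1$. Since $\xi\in X_1\subset X$, Proposition~\ref{833}'s predecessor gives $\xi\in\Cl0{}([0,T];\Cl1{}([0,L]))$ with $\supess_t\norma{\xi(\cdot,t)}_{\Hp2{}(0,L)}<\infty$ and $\dot\xi\in\Lp2{}(0,T;\Lp2{}(0,L))$; moreover $\norm{\xi'}\equiv1$. The key computational identity, valid at least in the distributional sense in $t$, is
\begin{equation}\label{prop-derivative}
\dot\chi(s,t)=\dot x(t)+\dot R(t)\xi(s,t)+R(t)\dot\xi(s,t),
\end{equation}
and since $R(t)$ is a rotation, differentiating $R(t)R(t)^{\mathsf T}=\Id$ shows $\dot R(t)=\dot\theta(t)JR(t)$ for a scalar angular-velocity function $\dot\theta$, so the regularity of $R$ in $\Hp1{}(0,T)$ is equivalent to $\theta\in\Hp1{}(0,T)$, i.e.\ to $\dot\theta\in\Lp2{}(0,T)$. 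Note also that differentiating $\chi'=R\xi'$ in $s$ and using $\norm{\chi'}=\norm{\xi'}=1$ automatically is consistent, so the constraint in $X_1$ imposes nothing extra on $R$.

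For the implication (ii)$\Rightarrow$(i): assuming $x,R\in\Hp1{}(0,T)$, I first observe that $\chi(\cdot,t)\in\Hp2{}(0,L)$ for a.e.\ $t$ since $R(t)$ is a bounded (orthogonal) matrix and $\xi(\cdot,t)\in\Hp2{}(0,L)$; the essential supremum bound $\supess_t\norma{\chi(\cdot,t)}_{\Hp2{}(0,L)}<\infty$ follows because $\norm{R(t)}=1$, $\norm{x(t)}$ is bounded (from $\Hp1{}(0,T)\hookrightarrow\Cl0{}([0,T])$), and a constant function lies in $\Hp2{}(0,L)$. Then I use \eqref{prop-derivative}: the term $R\dot\xi$ is in $\Lp2{}(0,T;\Lp2{}(0,L))$ because $\norm{R}=1$ and $\dot\xi\in\Lp2{}(0,T;\Lp2{}(0,L))$; the term $\dot x(t)$ is in $\Lp2{}(0,T;\Lp2{}(0,L))$ since $\dot x\in\Lp2{}(0,T)$ and the spatial domain has finite measure; and $\dot R(t)\xi(s,t)=\dot\theta(t)JR(t)\xi(s,t)$ is in $\Lp2{}(0,T;\Lp2{}(0,L))$ because $\dot\theta\in\Lp2{}(0,T)$ while $\xi$ is bounded uniformly in $\Lp2{}(0,L)$ (indeed in $\Lp\infty{}$) in $t$ via the $\Cl0{}([0,T];\Cl1{}([0,L]))$ embedding. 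Hence $\dot\chi\in\Lp2{}(0,T;\Lp2{}(0,L))$, so $\chi\in X$, and the arc-length constraint $\norm{\chi'}=\norm{R\xi'}=\norm{\xi'}=1$ gives $\chi\in X_1$.

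For the converse (i)$\Rightarrow$(ii): given $\chi\in X_1$, I want to extract $x$ and $R$ from $\chi$ and $\xi$ and deduce their $\Hp1{}$ regularity. Integrating \eqref{1001} in $s$ over $[0,L]$ and using $\int_0^L\xi(s,t)\,\de s=0$ (the normalization introduced after \eqref{1001}) yields $x(t)=\frac1L\int_0^L\chi(s,t)\,\de s$; since $\chi\in X\subset\Hp1{}(0,T;\Lp2{}(0,L))$, integrating in $s$ is a bounded linear map into $\Hp1{}(0,T;\R2)=\Hp1{}(0,T)$, so $x\in\Hp1{}(0,T)$. For $R$, I use $\chi'(s,t)-\chi'(0,t)$-type differences, or more directly $\chi'(s,t)=R(t)\xi'(s,t)$; picking any fixed $s_0$ (or averaging in $s$ against a suitable test profile) for which $\xi'(s_0,\cdot)$ and, say, $\xi'(s_1,\cdot)$ span $\R2$ at each $t$ — which one can arrange on subintervals since $\xi'$ is continuous and never zero — expresses the entries of $R(t)$ as solutions of a linear system whose coefficients and right-hand sides are the traces $\chi'(s_i,\cdot)$, $\xi'(s_i,\cdot)$. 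The main obstacle, and the step requiring care, is precisely this: the pointwise-in-$s$ traces $\chi'(s_i,\cdot)$ are only known to be continuous in $t$ a priori, not $\Hp1{}$ in $t$, so one cannot naively differentiate; instead I would argue that $R(t)=\dot\theta$-style reconstruction via the angle $\theta(t)$ — defined so that $R(t)\xi'(s,t)=\chi'(s,t)$ forces $\theta(t)$ to be the difference of the (continuous, locally-lifted) angles of $\chi'(s,\cdot)$ and $\xi'(s,\cdot)$ — and then show $\dot\theta\in\Lp2{}$ by testing the distributional identity \eqref{prop-derivative} against smooth functions and using that $\dot\chi,\dot\xi\in\Lp2{}$ together with the already-established $\dot x\in\Lp2{}$, so that $\dot\theta(t)JR(t)\xi(s,t)=\dot\chi(s,t)-\dot x(t)-R(t)\dot\xi(s,t)$ exhibits $\dot\theta$ (after pairing with $\xi$ itself, whose $\Lp2{}(0,L)$-norm is bounded below away from $0$ by the arc-length and zero-average conditions) as an $\Lp2{}(0,T)$ function. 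Once $\dot\theta\in\Lp2{}(0,T)$, we get $\theta\in\Hp1{}(0,T)$ and hence $R\in\Hp1{}(0,T)$, completing the proof.
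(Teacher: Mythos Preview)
Your treatment of (ii)$\Rightarrow$(i) is correct and essentially what the paper does (the paper dismisses this direction in one line).

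The direction (i)$\Rightarrow$(ii), however, has two genuine gaps.

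First, the normalization $\int_0^L\xi(s,t)\,\de s=0$ is \emph{not} a hypothesis of the proposition; it is mentioned in the paper only as a possible interpretive choice. You invoke it to get $x(t)=\tfrac1L\int_0^L\chi(s,t)\,\de s\in\Hp1{}(0,T)$ before touching $R$. This can be repaired (replace $\xi$ by $\xi-\overline\xi$ and $x$ by $x+R\overline\xi$, noting $\overline\xi\in\Hp1{}(0,T)$), but as written the step is unjustified, and without it your order of argument---$x$ first, then $R$---collapses: from $\overline\chi=x+R\overline\xi$ you cannot isolate $x$ until you know $R\in\Hp1{}$.

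Second, and more seriously, your extraction of $\dot\theta$ is circular. You write the identity $\dot\theta(t)JR(t)\xi(s,t)=\dot\chi(s,t)-\dot x(t)-R(t)\dot\xi(s,t)$ and then read off $\dot\theta\in\Lp2{}$; but this identity is the Leibniz rule $(R\xi)^{\boldsymbol\cdot}=\dot R\,\xi+R\,\dot\xi$, which you are not entitled to use until you know $R$ is (weakly) differentiable---precisely what you are trying to prove. A priori $R$ is only known to be continuous, and the distributional derivative of a continuous function cannot in general be multiplied by the merely $\Hp1{}$-in-$t$ function $\xi$. (Your earlier idea of choosing $s_0,s_1$ with $\xi'(s_0,\cdot),\xi'(s_1,\cdot)$ spanning $\R2$ also fails: for a straight segment $\xi'(s,t)\equiv e_1$ and no such pair exists.)

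The paper avoids both issues by reversing the order: it proves $R\in\Hp1{}$ first, via a purely \emph{algebraic} formula for its entries. Writing $\chi-\overline\chi=R(\xi-\overline\xi)$ and normalizing to unit vectors $\chi^*,\xi^*$ on a neighborhood $U\times V$ where $\xi-\overline\xi\neq0$, one has
\[
R(t)=\mat{\scp{\chi^*}{\xi^*}}{-\scp{\chi^*}{J\xi^*}}{\scp{\chi^*}{J\xi^*}}{\scp{\chi^*}{\xi^*}},
\]
and since $\chi^*,\xi^*\in\Hp1{}(V;\Lp2{}(U))\cap\Cl0{}(\overline V;\Cl1{}(\overline U))$, these scalar entries lie in $\Hp1{}(V)$; a covering argument gives $R\in\Hp1{}(0,T)$, and then $x=\overline\chi-R\overline\xi\in\Hp1{}(0,T)$ follows. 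Your approach can be rescued in the same spirit: rather than differentiating, take the $\Lp2{}(0,L)$ inner product of $R\xi=\chi-x$ (or of $R(\xi-\overline\xi)=\chi-\overline\chi$, avoiding the normalization issue) with $\xi-\overline\xi$ and with $J(\xi-\overline\xi)$ to obtain $\cos\theta(t)$ and $\sin\theta(t)$ as ratios of $\Hp1{}(0,T)$ functions with denominator $\|\xi(\cdot,t)-\overline\xi(t)\|_{\Lp2{}}^2$ bounded away from zero. That would close the argument without the circularity.
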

\begin{proof}
(i)$\Rightarrow$(ii).
For every $t\in[0,T]$ we define
$$
\overline\xi(t):=\frac1L\int_0^L \xi(s,t)\,\de s \qquad\text{and}\qquad
\overline\chi(t):=\frac1L\int_0^L \chi(s,t)\,\de s.
$$
Since $\xi,\,\chi\in X_1\subset \Hp1{}(0,T;\Lp2{}(0,L))$ we have that $\overline\xi,\,\overline\chi \in\Hp1{}(0,T)$.
By averaging \eqref{1001} with respect to $s$ we obtain
\begin{equation}\label{837}
\overline\chi(t)=x(t)+ R(t)\overline\xi(t).
\end{equation}
Subtracting this equation from \eqref{1001} we obtain
\begin{equation}\label{839}
\chi(s,t)-\overline\chi(t)=R(t)\big(\xi(s,t)-\overline\xi(t)\big).
\end{equation}

\par Let us fix $t_0\in[0,T]$. Since $|\xi'(s,t_0)|=1$ for every $s\in [0,L]$, there exists $s_0\in [0,L]$ such that $\xi(s_0,t_0)-\overline\xi(t_0)\neq 0$.
By the continuity of $\xi$ there exist $\eps>0$, an open neighborhood $U$ of $s_0\in[0,L]$, and an open neighborhood $V$ of $t_0$ in $[0,T]$ such that $\norm{\chi(s,t)-\overline\chi(t)}=\norm{\xi(s,t)-\overline\xi(t)}\geq\eps$ for every $s\in U$ and every $t\in V$, where the equality follows from \eqref{839}. Let $\chi^*(s,t):=\big(\chi(s,t)-\overline\chi(t)\big)/\norm{\chi(s,t)-\overline\chi(t)}$ and $\xi^*(s,t):=\big(\xi(s,t)-\overline\xi(t)\big)/\norm{\xi(s,t)-\overline\xi(t)}$. By \eqref{839} we have
\begin{equation*}
\chi^*(t,s)=R(t)\xi^*(t,s)
\end{equation*}
for every $s\in U$ and every $t\in V$. By elementary Linear Algebra we have
\begin{equation}\label{841}
R(t)=\mat{\scp{\chi^*(s,t)}{\xi^*(s,t)}}{-\scp{\chi^*(s,t)}{J\xi^*(s,t)}}{\scp{\chi^*(s,t)}{J\xi^*(s,t)}}{\scp{\chi^*(s,t)}{\xi^*(s,t)}}.
\end{equation}
By \eqref{3101} and \eqref{3103} the functions $\chi^*$ and $\xi^*$ belong to $\Hp1{}(V;\Lp2{}(U))\cap \Cl0{}(\overline V;\Cl1{}(U))$, so that the entries of the matrix in \eqref{841} belong to $\Hp1{}(V;\Lp2{}(U))$. Since the matrix does not depend on $s$, we obtain $R\in\Hp1{}(V)$. The conclusion $R\in\Hp1{}(0,T)$ follows now from a covering argument.

Since $\overline \chi$, $R$, and $\overline\xi$ belong to $\Hp1{}(0,T)$, we deduce from \eqref{837} that $x\in \Hp1{}(0,T)$.

(ii)$\Rightarrow$(i). This implication follows easily from \eqref{308} and \eqref{1001}.
\end{proof}

\begin{remark}\label{curv}
The purpose of the function $\xi(\cdot,t)$ is to describe the shape of the swimmer as a function of time. For each $t$ we can choose the most convenient reference system. Of course, different choices are compensated by different rigid motions in \eqref{1001}.

In many cases it is convenient to describe the shape of the swimmer by means of the (oriented) curvature $\kappa(s,t)$ of the curve $\xi(\cdot,t)$ at $s$. This is because both in living organisms and in technological devices shape changes are usually obtained by controlling the mutual distance of several pairs of points. Prescribing the curvature can be interpreted as the infinitesimal version of this control, whose description is easier from the mathematical point of view.

If $\chi\in X_1$ and $\xi\in X_1$ are linked by \eqref{1001}, then clearly their curvatures are the same. 
Given $\xi\in X_1$, let 
$\vartheta(s,t)$ be the oriented angle between the $x_1$-axis and the oriented tangent to the curve $\xi(\cdot,t)$ at $s$. It is well known that $\kappa(s,t)= \scp{\xi''(s,t)}{J\xi'(s,t)}=\vartheta'(s,t)$, so we can easily get $\kappa$ from $\xi$ by differentiation and $\vartheta$ from $\kappa$ by integration. In particular, if we assume $\xi'(0,t)=e_1$, we have $\vartheta(0,t)=0$, hence 
$$
\vartheta(s,t)=\int_0^s \kappa(\sigma,t)\,\de\sigma.
$$
Then the definition of $\vartheta(s,t)$ gives that $\xi'(s,t)=(\cos\vartheta(s,t),\sin\vartheta(s,t))$, so that, if $\xi(0,t)=0$, we have
$$
\xi(s,t)=\int_0^s (\cos\vartheta(\sigma,t),\sin\vartheta(\sigma,t))\,\de\sigma.
$$
This shows that the descriptions of the shape given by $\xi(s,t)$ and $\kappa(s,t)$ are equivalent.
\end{remark}

\par By the change of reference \eqref{1001}, it is possible to rephrase the force and torque balance \eqref{105} and eventually obtain ordinary differential equations governing the time evolution of $x$ and $R$. Those will be the equations of motion of the swimmer. We can write
\begin{equation*}
R(t)=\mat{\cos\theta(t)}{-\sin\theta(t)}{\sin\theta(t)}{\cos\theta(t)},
\end{equation*}
where $\theta(t)$ is the angle of rotation. We assume that $\xi$ and $\chi$ belong to $X_1$. Thanks to Proposition \ref{752}, we can differentiate \eqref{1001} with respect to time. Plugging all the terms in \eqref{105} and noticing that $K_\chi(s,t)=R(t)K_\xi(s,t)R^\top(t)$, we obtain
\begin{equation}\label{1002}
\!\!\!\!\vec{F(t)}{M_x(t)}\!=\!\mat{R(t)}001\left\{-\mat{A(t)}{b(t)}{b^\top(t)}{c(t)}\!\mat{R^\top(t)}001\!\vec{\dot x(t)}{\dot\theta(t)}+ \vec{F^\sh(t)}{M^\sh(t)}\right\},
\end{equation}
where $M_x(t):=M(t)-\scp{F(t)}{Jx(t)}$ and $\cR(t)=\mat{A(t)}{b(t)}{b^\top(t)}{c(t)}$ is the grand resistance matrix of \cite{HappelBrenner1983}, whose entries are given by
\begin{subequations}\label{709a}
\begin{equation}\label{709}
A(t):=\!\int_0^L\! K_\xi(s,t)\,\de s, \quad b(t):=\!\int_0^L\! K_\xi(s,t)J\xi(s,t)\,\de s,
\end{equation}
\begin{equation}\label{709b}
c(t):=\!\int_0^L\! \scp{J\xi(s,t)}{K_\xi(s,t)J\xi(s,t)}\,\de s.
\end{equation}
\end{subequations}

\par It is easy to see that the functions $A$, $b$, and $c$ are ultimately determined by the shape function $\xi$ alone. The terms
\begin{equation}\label{006}
F^\sh(t):=-\int_0^L K_\xi(s,t)\dot\xi(s,t)\,\de s,\qquad M^\sh(t):=-\int_0^L \langle J\xi(s,t),K_\xi(s,t)\dot\xi(s,t)\rangle\,\de s,
\end{equation}
are the contributions to the force and torque due to the shape deformation of the swimmer, and they depend linearly on the time derivative $\dot\xi$.

\par Enforcing the force and torque balance \eqref{105} is equivalent to setting \eqref{1002} equal to zero and solving for $\dot x$ and $\dot\theta$, which eventually leads to the equations
\begin{equation}\label{1027}
\left\{\begin{array}{rl}
\dot x(t) &\!\!\!\! = R(t)v(t),\\ [2pt]
\dot\theta(t) &\!\!\!\! = \omega(t),
\end{array}\right.
\end{equation}
where
\begin{equation}\label{1028}
v(t):=\bar A(t)F^\sh(t)+\bar b(t)M^\sh(t),\qquad \omega(t):=\bar b^\top(t)F^\sh(t)+\bar c(t)M^\sh(t),
\end{equation}
and $\bar A(t)$, $\bar b(t)$, and $\bar c(t)$ are the block elements of the inverse matrix $\cR^{-1}(t)$. The structure of this system of ordinary differential equations is the same as that previously obtained in \cite{ADSL2009,DMDSM11}. The following result, analogous to \cite[Theorem 6.4]{DMDSM11}, holds
\begin{theorem} \label{0001}
Let $\xi\in X_1\,$, $x_0\in\R2$, and $\theta_0\in\R{}$. Then the equations of motion \eqref{1027}, with initial conditions $x(0)=x_0$ and $\theta(0)=\theta_0$, have a unique absolutely continuous solution $t\mapsto(x(t),\theta(t))$ defined in $[0,T]$ with values in $\R2{\times}\R{}$. This solution actually belongs to $\Hp1{}(0,T)$. In other words, there exists a unique rigid motion $t\mapsto r(t)$, such that the deformation function defined by \eqref{1001} belongs to $X_1$, satisfies the equations of motion \eqref{105}, and the initial conditions $x(0)=x_0$ and $R(0)=R_{\theta_0}\,$, the rotation of angle $\theta_0$.
\end{theorem}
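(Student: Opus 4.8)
The plan is to apply the Carathéodory existence and uniqueness theorem for ordinary differential equations to the system \eqref{1027}, once we establish that the right-hand side has the required regularity. The system has a special structure: the equation for $\theta$ is $\dot\theta(t)=\omega(t)$ with $\omega$ depending only on $\xi$ (through $F^\sh,M^\sh$ and the resistance matrix), and the equation for $x$ is $\dot x(t)=R(t)v(t)$ where again $v$ depends only on $\xi$ and $R(t)=R_{\theta(t)}$ depends on the unknown $\theta$ in a smooth (analytic) way. So the key point is that \eqref{1027} is really a decoupled system: first solve $\theta(t)=\theta_0+\int_0^t\omega(\sigma)\,\de\sigma$ by direct integration, then solve for $x$ by direct integration $x(t)=x_0+\int_0^t R_{\theta(\sigma)}v(\sigma)\,\de\sigma$. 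For this to make sense I only need $\omega,v\in L^1(0,T)$; in fact I will show they are in $L^2(0,T)$, which gives $x,\theta\in H^1(0,T)$ and hence absolutely continuous, and uniqueness is automatic since the solution is given by an explicit formula.

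First I would record the regularity of the data. By \eqref{3103} and \eqref{8360}, for $\xi\in X_1$ the maps $\xi$ and $\xi'$ are continuous on $[0,L]\times[0,T]$, and $|\xi'|\equiv1$; hence $K_\xi(s,t)$ defined by \eqref{0000} is continuous and bounded on $[0,L]\times[0,T]$, with $C_\tau I\leq K_\xi\leq C_\nu I$ (or the reverse ordering) in the sense of quadratic forms — in particular $K_\xi$ is uniformly positive definite. It follows that $A,b,c$ in \eqref{709a}, being integrals over $s\in[0,L]$ of continuous bounded functions of $t$, are continuous on $[0,T]$, and the grand resistance matrix $\cR(t)$ is symmetric and uniformly positive definite on $[0,T]$: indeed for $(\eta,\mu)\in\R2\times\R{}$ one computes $\langle\cR(t)(\eta,\mu),(\eta,\mu)\rangle=\int_0^L\langle K_\xi(s,t)(\eta+\mu J\xi(s,t)),\eta+\mu J\xi(s,t)\rangle\,\de s$, which is bounded below by a positive constant times $|\eta|^2+\mu^2$ using the uniform positivity of $K_\xi$ together with the fact that $s\mapsto\xi'(s,t)$ spans $\R2$ (so the vectors $\eta+\mu J\xi(s,t)$ cannot all vanish or be parallel to a fixed direction uniformly). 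Consequently $\cR(t)^{-1}$ exists for every $t$, is bounded uniformly in $t$, and depends continuously on $t$; thus $\bar A,\bar b,\bar c$ are continuous and bounded on $[0,T]$.

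Next I would treat the shape-dependent forcing. Since $\dot\xi\in L^2(0,T;L^2(0,L))$ by the definition \eqref{308} of $X$, and $K_\xi$ is bounded and $J\xi$ is bounded (as $\xi\in\Cl0{}([0,T];\Cl1{}([0,L]))$), the integrands in \eqref{006} are products of an $L^\infty$ function and an $L^2$-in-$(s,t)$ function; integrating in $s$ over the bounded interval $[0,L]$ and applying Cauchy--Schwarz shows $F^\sh,M^\sh\in L^2(0,T)$. Combining with the boundedness of $\bar A,\bar b,\bar c$, the functions $v,\omega$ in \eqref{1028} belong to $L^2(0,T)\subset L^1(0,T)$. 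Then $\theta(t):=\theta_0+\int_0^t\omega$ is the unique absolutely continuous solution of $\dot\theta=\omega$, $\theta(0)=\theta_0$, and it lies in $H^1(0,T)$; the matrix-valued map $t\mapsto R_{\theta(t)}$ is then continuous (indeed $H^1$, since $\theta\in H^1$ and the entries are Lipschitz functions of $\theta$ composed with $\theta$), so $t\mapsto R_{\theta(t)}v(t)$ is in $L^2(0,T)$, and $x(t):=x_0+\int_0^t R_{\theta(\sigma)}v(\sigma)\,\de\sigma$ is the unique absolutely continuous solution in $H^1(0,T)$ of the $x$-equation. This proves existence, uniqueness, and the $H^1$ regularity of $(x,\theta)$. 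The final ``in other words'' sentence then follows by Proposition \ref{752}: since $x,R\in H^1(0,T)$, the function $\chi$ defined by \eqref{1001} belongs to $X_1$, and by construction it satisfies the force and torque balance \eqref{105}, since setting the right-hand side of \eqref{1002} to zero is, by the uniform invertibility of $\cR(t)$ and of the block-diagonal rotation factor, equivalent to \eqref{1027}.

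The main obstacle, and the only genuinely non-routine step, is the uniform positive-definiteness (hence uniform invertibility) of the grand resistance matrix $\cR(t)$ on the whole interval $[0,T]$. The pointwise-in-$t$ invertibility is a standard rigidity computation, but to get $\cR^{-1}$ bounded in $t$ — which is what is needed for $v,\omega\in L^2$ — one must show the smallest eigenvalue of $\cR(t)$ is bounded away from zero uniformly; this uses the compactness of $[0,T]$ together with the continuity of $t\mapsto\cR(t)$ established from $\xi\in\Cl0{}([0,T];\Cl1{}([0,L]))$, plus the geometric fact that the unit-speed condition $|\xi'(\cdot,t)|\equiv1$ prevents the quadratic form above from degenerating for any fixed $t$. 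Everything else — the $L^2$ bounds on the forcing, the passage from \eqref{1002} to \eqref{1027}, and the explicit integration — is bookkeeping.
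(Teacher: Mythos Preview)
Your proposal is correct and follows essentially the same route as the paper: exploit the decoupled (triangular) structure of \eqref{1027}, integrate the $\theta$-equation first using that $\omega\in L^2(0,T)$ (from continuity of $\bar b,\bar c$ and $F^\sh,M^\sh\in L^2$), then feed $R_{\theta(\cdot)}$ into the $x$-equation and integrate again, finally invoking Proposition \ref{752} for the ``in other words'' statement. The paper's proof is terser and simply asserts that $\cR(t)^{-1}$ has continuous entries; your treatment of the uniform positive-definiteness of $\cR(t)$ (via the quadratic-form identity and compactness of $[0,T]$) fills in a detail the paper leaves implicit, but this is elaboration rather than a different approach.
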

\begin{proof}
The result easily follows from the classical theory of ordinary differential equations, see, e.g., \cite{Hale1980}. Indeed, the coefficients $\bar b^\top$ and $\bar c$ are continuous function of $t$, since they come from the inversion of the grand resistance matrix $\cR$, whose entries are continuous in $t$. On the contrary, $F^\sh$ and $M^\sh$ are only $\Lp2{}$ functions of time. This is enough to integrate the second equation in \eqref{1027}. By plugging the solution for $\theta$ into the first equation and by an analogous argument on the coefficients $\bar A$ and $\bar b$, also the equation for $x$ has a unique solution with prescribed initial data.

\par The last statement follows esily from Proposition \ref{752}.
\end{proof}

\par Some notes on the matrix $K$ and on the coefficients $C_\tau$ and $C_\nu$ are in order. First, we assume that 
\begin{equation}\label{716}
0<C_\tau<C_\nu\,,
\end{equation}
secondly, we notice that the matrix $K_\chi$ (and therefore $K_\xi$) is symmetric and positive definite, and defines a scalar product in the space $X_1$\,. Indeed, by introducing the power expended during the motion
\begin{equation}\label{007}
\begin{split}
\cP(\chi):= & \int_0^T\!\!\int_0^L \scp{-f(s,t)}{\dot\chi(s,t)}\,\de s\de t=\int_0^T\!\!\int_0^L \scp{K_\chi(s,t)\dot\chi(s,t)}{\dot\chi(s,t)}\,\de s\de t\\
= & \int_0^T\!\!\int_0^L [C_\tau\dot\chi^2_\tau(s,t)+C_\nu\dot\chi^2_\nu(s,t)]\,\de s\de t,
\end{split}
\end{equation}
we find that
\begin{equation*}
C_\tau\norma{\dot\chi}_{\Lp2{}(0,T;\Lp2{}(0,L))}^2\leq\cP(\chi)\leq C_\nu\norma{\dot\chi}_{\Lp2{}(0,T;\Lp2{}(0,L))}^2\,.
\end{equation*}
Moreover, it follows from \eqref{0000} and \eqref{8360} that the matrices $K_\xi$ and $K_\chi$ are continuous in $(s,t)$.

\par Finally, the strict inequality assumption $C_\tau<C_\nu$ cannot be weakened. Indeed, if we had $C_\tau=C_\nu$\,, then $K_\chi(s,t)$ would be a multiple of the identity matrix and therefore, from \eqref{105a}, we would have
$$0=F(t)=-C_\tau\int_0^L \dot\chi(s,t)\,\de s=-C_\tau\frac{\de}{\de t}\left(\int_0^L \chi(s,t)\,\de s\right),$$
which is expressing that the barycenter does not move as time evolves.

\subsection{The shape function}
We introduce now an important assumption on the shape function $\xi$, called \emph{two disks condition}, which rules out  self-intersections of the swimmer. This hypothesis will be crucial in the proof of the existence of an optimal swimming strategy. The idea underlying this condition is that two distinct points of the swimmer cannot become too close to each other during the motion.

\begin{defin}\label{755}
We say that $\xi\in\Hp2{}(0,L;\R2)$ with $\norm{\xi'(s)}=1$ for every $s\in[0,L]$ satisfies the \emph{two disks condition} with radius $\rho>0$ if the following conditions are satisfied (see Fig.~\ref{f1}):
\begin{itemize}
\item[(a)] for every $s\in[0,L]$ there exist open disks $B^1(s),B^2(s)$ of radius $\rho>0$ such that $B^1(s)\cap B^2(s)=\emptyset$, $\xi(s)\in\cl B{}^1(s)\cap\cl B{}^2(s)$, and $\xi(\sigma)\notin B^1(s)\cup B^2(s)$ for every $\sigma\in[0,L]$;
\item[(b)] there exist open half disks $B^-,B^+$ of radius $2\rho$ centered at $\xi(0)$ and $\xi(L)$, respectively, with diameters normal to $\xi'(0)$ and $\xi'(L)$, respectively, such that $\xi(\sigma)\notin B^-\cup B^+$ for every $\sigma\in[0,L]$.
\end{itemize}
\end{defin}
\par Since $\xi$ is of class $\Cl1{}$ and $\norm{\xi'(s)}=1$, the disks considered in condition (a) are uniquely determined by $\xi(s)$ and $\xi'(s)$. Indeed, they are the disks with centers $\xi(s)\pm\rho J\xi'(s)$ and radius $\rho$. In the sequel we will always assume that 
\begin{equation}\label{758}
B^1(s)=B_\rho(\xi(s)+\rho J\xi'(s)),\qquad B^2(s)=B_\rho(\xi(s)-\rho J\xi'(s)).
\end{equation}

\par The following proposition proves an important consequence of the two disks condition.
\begin{proposition}\label{756}
Let $\xi\in\Hp2{}(0,L;\R2)$ with $\norm{\xi'(s)}=1$ for every $s\in[0,L]$. Assume that $\xi$ satisfies the two disks condition for some radius $\rho>0$. Then $\xi$ is injective on $[0,L]$.
\end{proposition}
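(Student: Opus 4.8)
The plan is to argue by contradiction: assuming $\xi$ is not injective, I extract a \emph{minimal self‑intersecting loop}, show that the two disks condition makes it extremely rigid, and exhibit a point of the curve inside one of the forbidden disks.

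\textbf{Step 1: a curvature bound and a minimal loop.} First I read off from condition~(a) the bound $|\kappa(s)|\le 1/\rho$ for a.e.\ $s$, where $\kappa=\vartheta'$ is the curvature and $\vartheta$ the tangent angle of Remark~\ref{curv} ($\xi'=(\cos\vartheta,\sin\vartheta)$): at a Lebesgue point of $\xi''$ one has $\xi(s+h)-\xi(s)=h\xi'(s)+\tfrac{h^2}{2}\kappa(s)J\xi'(s)+o(h^2)$, so $|\xi(s+h)-\xi(s)\mp\rho J\xi'(s)|^2=\rho^2+h^2\bigl(1\mp\rho\kappa(s)\bigr)+o(h^2)$, and $\xi(s+h)\notin B^1(s)\cup B^2(s)$ forces $1\mp\rho\kappa(s)\ge0$ for both signs. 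Hence $|\vartheta(\sigma)-\vartheta(s)|\le|\sigma-s|/\rho$, so on any interval of length $<\pi\rho/2$ we have $\langle\xi'(\sigma),\xi'(s)\rangle=\cos(\vartheta(\sigma)-\vartheta(s))>0$ and therefore $|\xi(\sigma)-\xi(s)|\ge\int_s^\sigma\langle\xi'(u),\xi'(s)\rangle\,\de u>0$; thus $\xi$ is injective on every such subinterval. Consequently $\ell_0:=\inf\{\sigma_2-\sigma_1:\sigma_1<\sigma_2,\ \xi(\sigma_1)=\xi(\sigma_2)\}\ge\pi\rho/2>0$, and a compactness argument (the set of such pairs is nonempty by non‑injectivity and $\{\xi(\sigma_1)=\xi(\sigma_2)\}$ is closed) shows the infimum is attained at some $(\sigma_1,\sigma_2)$. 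By minimality $\xi$ is injective on $[\sigma_1,\sigma_2)$, so $\gamma:=\xi|_{[\sigma_1,\sigma_2]}$ is a simple loop based at $p:=\xi(\sigma_1)=\xi(\sigma_2)$.

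\textbf{Step 2: rigidity from the disks.} A first‑order expansion near $\sigma_2$ together with $\xi(\sigma_2+h)\notin B^1(\sigma_1),B^2(\sigma_1)$, i.e.\ $|\xi(\sigma_2+h)-p\mp\rho J\xi'(\sigma_1)|\ge\rho$, gives after squaring and dividing by $h$ (letting $h\to0$, one sign suffices since both disks are used) $\langle\xi'(\sigma_2),J\xi'(\sigma_1)\rangle\le0$ from one disk and $\ge0$ from the other; hence $\xi'(\sigma_2)=\pm\xi'(\sigma_1)$. Next, let $q:=\xi(\sigma^*)$ be a point of $\gamma$ farthest from $p$; since $|\xi'|\equiv1$ we have $R:=|q-p|>0$ and $\sigma^*\in(\sigma_1,\sigma_2)$, so $\tfrac{\de}{\de s}|\xi(s)-p|^2$ vanishes at $\sigma^*$ and $q-p$ is parallel to $J\xi'(\sigma^*)$. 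The two disks at $q$ then have centres $q\pm\rho J\xi'(\sigma^*)$, lying on the line through $p$ and $q$ at distances $|R-\rho|$ and $R+\rho$ from $p$; since $p\in\xi([0,L])$ avoids both open disks, $|R-\rho|\ge\rho$, hence $R\ge2\rho$. So any minimal loop must travel at least $2\rho$ from its base and return. (One also records that by condition~(a) the segment $[\xi(s)-\rho J\xi'(s),\xi(s)+\rho J\xi'(s)]$ meets $\xi([0,L])$ only at $\xi(s)$, its other points lying in $B^1(s)\cup B^2(s)$.)

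\textbf{Step 3: the contradiction.} It remains to exclude such a loop, and this is where condition~(b) enters. When $p=\xi(0)$ or $p=\xi(L)$, the half‑disk(s) of radius $2\rho$ centred at $p$ are avoided by $\gamma$, which is incompatible with the curvature bound together with Step~2 (the loop must reach distance $\ge2\rho$ from $p$ and come back, hence enter one of these half‑disks). When the loop is interior ($0<\sigma_1<\sigma_2<L$) one instead argues directly from condition~(a), bringing in the two nontrivial arcs of $\xi$ issuing from $p$ and the disks $B^1(s),B^2(s)$ running along $\gamma$. I expect this last step to be the main obstacle: turning the qualitative picture — a simple loop of curvature $\le1/\rho$ that reaches distance $\ge2\rho$ from its base while being shielded on both sides by radius‑$\rho$ disks and, at the base, by radius‑$2\rho$ half‑disks — into a rigorous contradiction, most plausibly via the Jordan curve theorem applied to the region enclosed by $\gamma$ together with a careful placement of the forbidden disks (and their centres) relative to that region.
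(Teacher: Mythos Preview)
Your Step~3 is not a proof but an acknowledgment of a gap, and the gap is real. In the interior case ($0<\sigma_1<\sigma_2<L$) you propose to argue ``directly from condition~(a)'', but condition~(a) alone does \emph{not} force injectivity: a circle of radius $r\ge\rho$ traced twice on $[0,4\pi r]$ satisfies~(a) at every point (the inner and outer tangent disks of radius $\rho$ are never entered) yet is badly non-injective. So any completion of your argument must bring condition~(b) into the interior case as well, and nothing in your Steps~1--2 does that. Your observations in Steps~1--2 (the curvature bound, local injectivity on intervals of length $<\pi\rho/2$, the tangent matching $\xi'(\sigma_2)=\pm\xi'(\sigma_1)$, and the nice $R\ge2\rho$ bound) are correct, but they do not by themselves pin down the loop enough to force a contradiction.

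The paper's route is quite different and bypasses all the loop geometry. From condition~(a) it extracts a \emph{local graph property}: near any point $\xi(s_1)$ the union of the two tangent disks $B^1(s_1)\cup B^2(s_1)$ covers a full rectangle minus the graph of a $C^1$ function $g$ over the tangent line, so \emph{every} arc of $\xi$ passing through that rectangle must lie on this graph. Hence if $\xi(s_1)=\xi(s_2)$ with $\xi'(s_1)=\xi'(s_2)$, the two arcs through that point are forced to coincide as arc-length parametrizations of the same graph, giving $\xi(s_1+s)=\xi(s_2+s)$ for small $s$. This local periodicity then propagates by a continuation argument until one hits the endpoint $L$: there is $\sigma_0=L-(s_2-s_1)<L$ with $\xi(\sigma_0)=\xi(L)$ and $\xi'(\sigma_0)=\xi'(L)$. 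Now condition~(b) finishes the job: the half-disk $B^+$ of radius $2\rho$ at $\xi(L)$ has its diameter perpendicular to $\xi'(L)=\xi'(\sigma_0)$, so the curve immediately after $\sigma_0$ (or immediately before $L$, depending on which half is $B^+$) enters $B^+$, contradicting~(b). The case $\xi'(s_1)=-\xi'(s_2)$ is handled analogously (one gets $\xi(s_1+s)=\xi(s_2-s)$, which either reaches an endpoint or forces $\xi'=-\xi'$ at the midpoint). The point is that the paper never analyses the loop as a geometric object; it uses~(a) to rigidify the \emph{parametrization} and then transports the self-intersection to the boundary where~(b) applies.
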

\begin{proof}
Assume by contradiction that there exist $s_1<s_2$ such that $\xi(s_1)=\xi(s_2)$. It is easy to see that the two disks condition implies that $\xi'(s_1)=\pm\xi'(s_2)$. Assume that $\xi'(s_1)=\xi'(s_2)$\,, the other case being analogous. Since these derivatives have norm 1, by changing the coordinate system we may assume that $\xi'(s_1)=\xi'(s_2)=e_1$\,, the first vector of the canonical basis. We denote the coordinates of $\xi$ by $\xi_1$ and $\xi_2$ and we set $\alpha:=\xi_1(s_1)=\xi_1(s_2)$ and $\beta:=\xi_2(s_1)=\xi_2(s_2)$. By the Local Inversion Theorem, there exist $\eps>0$, $\delta>0$, and a $\Cl1{}$ function $g:(\alpha-\eps,\alpha+\eps)\to\R{}$ such that for every $s\in(s_1-\delta,s_1+\delta)\cap[0,L]$ we have $\norm{\xi_1(s)-\alpha}<\eps$ and $\xi_2(s)=g(\xi_1(s))$. Let
\begin{equation*}
E^1:=\bigcup_{\norm{s-s_1}<\delta} B^1(s),\qquad E^2:=\bigcup_{\norm{s-s_1}<\delta} B^2(s).
\end{equation*}
\par By \eqref{758} it is easy to see that there exists an open rectangle $R$ centered at $(\alpha,\beta)$ such that $E^1\cap R=\{(a,b)\in R: b>g(a)\}$ and $E^2\cap R=\{(a,b)\in R: b<g(a)\}$. By condition (a) of Definition \ref{755} $\xi(s)\in \{(a,b)\in R: b=g(a)\}$ for every $s\in[0,L]$ such that $\xi(s,t)\in R$. Therefore, $s\mapsto\xi(s)$ is locally an arc length parametrization of the graph of $g$. Since $\xi(s_1)=\xi(s_2)=(\alpha,\beta)$ and $\xi'(s_1)=\xi'(s_2)=e_1$\,, there exists $\eta>0$ such that $\xi(s_1+s)=\xi(s_2+s)$ for $\norm s<\eta$, provided $0\leq s_1+s<s_2+s\leq L$. This implies that $\xi(\sigma-(s_2-s_1))=\xi(\sigma)$ for every $\sigma$ in a neighborhood of $s_2$ in $[0,L]$ such that $\sigma-(s_2-s_1)\in[0,L]$. By taking the supremum $\sigma_0$ over $\sigma$, we obtain that $\xi(\sigma_0)=\xi(L)$ for $\sigma_0:=L-(s_2-s_1)\in[0,L)$. If $\sigma_0>0$, we deduce also that $\xi'(\sigma_0)=\xi'(L)$. The same equality holds when $\sigma_0=0$, because in this case $\sigma_0=s_1=0$ and $s_2=L$, so that 
the equality 
follows from the assumption $\xi'(s_1)=\xi'(s_2)$.

\par Let $B^+$ be the half disk considered in condition (b) of Definition \ref{755}. By the previous equalities, $\xi(\sigma_0)$ is the center of $B^+$ and $\xi'(\sigma_0)$ points towards the interior. It follows that $\xi(\sigma)\in B^+$ for some $\sigma>\sigma_0$\,, and this contradicts condition (b).
\end{proof}

\begin{figure}[!htbp]
\centering
\includegraphics[scale=1]{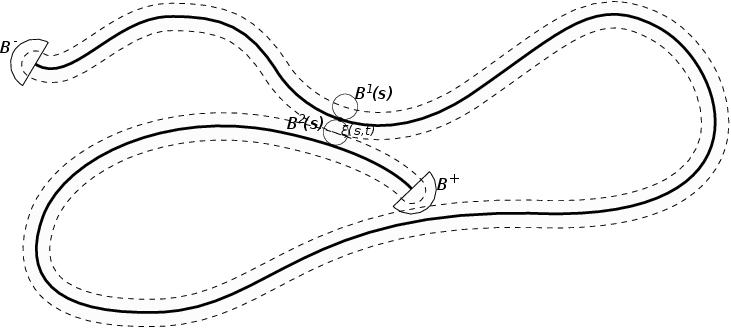}
\caption{Two disks condition: $B^1(s)$ and $B^2(s)$ are the open disks of radius $\rho$, $B^-$ and $B^+$ are the open half disks of radius $2\rho$. The dashed line represents the image of the map $h$ defined in \eqref{1025}.}
\label{f1}
\end{figure}

\par Given $\rho>0$ we introduce $C_{L,\rho}:=([0,L]{\times}\{0\})+B_\rho(0)$, the cigar-like set obtained by enlarging $[0,L]{\times}\{0\}$. 
Define now a map $h\colon [0,L]{\times}(-\rho,\rho)\to\R2$ by
\begin{equation}\label{1025}
h(s,y):=\xi(s)+yJ\xi'(s).
\end{equation}
This map is extended to a continuous map $h\colon C_{L,\rho}\to\R2$ by defining it as an isometry mapping $C_{L,\rho}\cap\{s<0\}$ into the half disk $\frac12B^-$ homothetic to the half disk $B^-$ considered in condition (b) of Definition \ref{755}, with the same center and half the radius; the definition in $C_{L,\rho}\cap\{s>L\}$ is similar and uses the half disk $\frac12B^+$ (see Fig.~\ref{f1}). The following proposition improves Proposition \ref{756} and provides an equivalent formulation of Definition \ref{755}.

\begin{proposition}\label{757}
Let $\xi\in\Hp2{}(0,L;\R2)$ with $\norm{\xi'(s)}=1$ for every $s\in[0,L]$, and let $\rho>0$. The map $h$ is injective if and only if $\xi$ satisfies the two disks condition with radius $\rho$.
\end{proposition}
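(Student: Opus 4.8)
The plan is to prove the two implications separately; the geometric heart is the direction ``two disks condition $\Rightarrow$ $h$ injective'', while the converse is an application of invariance of domain. Throughout I use that $\xi\in\Hp2{}(0,L;\R2)\subset\Cl1{}([0,L];\R2)$, so that the function $t\mapsto\norm{\xi(t)-c}^2$ is of class $\Cl1{}$ for every $c\in\R2$. The map $h$ is continuous on $C_{L,\rho}$ by construction, and $C_{L,\rho}$ is open in $\R2$; hence, if $h$ is injective, then by invariance of domain $h$ is an open map, so it is a homeomorphism of $C_{L,\rho}$ onto an open set $\Omega\subset\R2$.

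\emph{Two disks condition $\Rightarrow$ $h$ injective.} Suppose $h(s_1,y_1)=h(s_2,y_2)=:p$ with $(s_1,y_1)\neq(s_2,y_2)$, and assume first that both points lie in the body $[0,L]{\times}(-\rho,\rho)$. By \eqref{758}, if $y_i>0$ the point $p=\xi(s_i)+y_iJ\xi'(s_i)$ lies on the segment from $\xi(s_i)$ to the centre $\xi(s_i)+\rho J\xi'(s_i)$ of $B^1(s_i)$, at distance $y_i<\rho$ from $\xi(s_i)$; hence $B_{y_i}(p)$ is internally tangent to $B^1(s_i)$ at $\xi(s_i)$ and is therefore contained in $B^1(s_i)$, which by Definition \ref{755}(a) contains no point of $\xi([0,L])$ (for $y_i<0$ one uses $B^2(s_i)$, and for $y_i=0$ one has $p=\xi(s_i)$). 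Consequently $\norm{\xi(\sigma)-p}\geq\norm{y_i}$ for every $\sigma\in[0,L]$, with equality at $\sigma=s_i$, so that $\norm{y_1}=\norm{y_2}=:d=\dist{p}{\xi([0,L])}$. If $d=0$, then $\xi(s_1)=p=\xi(s_2)$, contradicting the injectivity of $\xi$ (Proposition \ref{756}) since $s_1\neq s_2$. If $d>0$, then $\xi(s_2)\in\partial B_d(p)$; since $d<\rho$, the internal tangency gives $\cl{B_d(p)}\setminus\{\xi(s_1)\}\subset B^1(s_1)$ (or $B^2(s_1)$), and as $\xi(s_2)\neq\xi(s_1)$ this yields $\xi(s_2)\in B^1(s_1)\cup B^2(s_1)$, again contradicting Definition \ref{755}(a). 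Therefore $s_1=s_2$ and then $y_1=y_2$. The configurations in which at least one preimage lies in a cap follow the same scheme: using condition (b) and condition (a) at $s=0$ or $s=L$, one first checks that the corresponding endpoint ($\xi(0)$ or $\xi(L)$) realizes $\dist{p}{\xi([0,L])}$, and then plays it off against the other preimage exactly as above; two preimages in the same cap coincide because $h$ is an isometry there, and two preimages lying in the two distinct caps are excluded by a short computation which again exploits that a cap point has radial coordinate $<\rho$.

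\emph{$h$ injective $\Rightarrow$ two disks condition.} I argue by contraposition. Suppose condition (a) failed, say $\xi(\sigma_0)\in B^1(s_0)$, and set $c_0:=\xi(s_0)+\rho J\xi'(s_0)$, so that $\dist{c_0}{\xi([0,L])}\leq\norm{\xi(\sigma_0)-c_0}<\rho$. Let $t_0$ be a minimum point of $t\mapsto\norm{\xi(t)-c_0}$ on $[0,L]$. The first-order optimality condition at $t_0$ — two-sided if $t_0\in(0,L)$, one-sided if $t_0\in\{0,L\}$ — together with $\norm{\xi(t_0)-c_0}<\rho$ shows in every case that $c_0\in\Omega$: either $\xi(t_0)-c_0$ is orthogonal to $\xi'(t_0)$ and then $c_0=h(t_0,\mu)$ for some $\norm{\mu}<\rho$, or $c_0$ lies in the $h$-image of the adjacent cap. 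On the other hand $h(s_0,y)\to c_0$ as $y\to\rho^-$, while the points $(s_0,y)\in C_{L,\rho}$ converge to $(s_0,\rho)\notin C_{L,\rho}$; since $h$ is a homeomorphism of $C_{L,\rho}$ onto $\Omega$, this forces $(s_0,y)=h^{-1}\!\bigl(h(s_0,y)\bigr)\to h^{-1}(c_0)\in C_{L,\rho}$, a contradiction. If condition (b) failed, say $\xi(\sigma_0)\in B^-$, one argues identically, taking now for $c_0$ the point of $\partial B_\rho(\xi(0))$ in the direction of $\xi(\sigma_0)$: one checks $\norm{\xi(\sigma_0)-c_0}<\rho$, so the distance-minimization argument again gives $c_0\in\Omega$, while $c_0$ is simultaneously the limit of $h$ along a sequence in $C_{L,\rho}$ converging to a point of $\partial C_{L,\rho}$ that $h$ sends onto the curved boundary of $\frac12 B^-$; the contradiction follows as before.

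The main obstacle is not a single deep step but the case bookkeeping. In the first implication one must verify, in the cap configurations, that the endpoint $\xi(0)$ or $\xi(L)$ really is the nearest point of $\xi([0,L])$ to $p$ (combining conditions (a) at $s=0,L$ and (b) with the geometry of the caps), and one must dispose of the ``two distinct caps'' case separately. In the second implication the one-sided first-order conditions at $s\in\{0,L\}$ have to be handled with care, and one must identify precisely which portion of $\partial C_{L,\rho}$ a given sequence converges to. The geometric core — the internal-tangency observation and the ``$c_0$ is an interior value yet a boundary limit'' device — is short, and everything else, though routine, must be carried out attentively.
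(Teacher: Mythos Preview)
Your proof is correct and follows a genuinely different route from the paper's.

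\textbf{Forward direction.} The paper proceeds by an elaborate case analysis: for two body points with $y_1,y_2\neq 0$ it proves that the two normal segments $S_i=\{\xi(s_i)+yJ\xi'(s_i):0<(\sign y_i)y<\rho\}$ are disjoint via a planar-geometry argument about intersecting circles of equal radius, and the mixed body/cap and cap/cap cases are handled separately using an auxiliary lemma (Lemma~\ref{r12}) bounding the region $D^-$. Your argument is more unified: once you observe that $B_{|y_i|}(p)$ is internally tangent to one of the two disks $B^j(s_i)$ at $\xi(s_i)$, you get immediately that each $\xi(s_i)$ realises $\dist{p}{\xi([0,L])}$, and the single inclusion $\overline{B_d(p)}\setminus\{\xi(s_1)\}\subset B^j(s_1)$ yields the contradiction. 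The cap cases indeed follow the same scheme (the key inclusion $\overline{B_d(p)}\setminus\{\xi(0)\}\subset B^-\cup B^1(0)\cup B^2(0)$ for $p\in\frac12B^-$ is a short computation), so your sketch can be completed without new ideas; the paper's approach trades this conceptual economy for more explicit, self-contained planar geometry.

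\textbf{Reverse direction.} The paper first derives the curvature bound $|\kappa|\le 1/\rho$ from the fact that nearby normal lines cannot meet within distance $\rho$ of the curve, then, assuming some $\xi(\sigma_0)\in B^1(s_0)$, picks $r<\rho$ with $\xi(\sigma_0)\in B_r(\xi(s_0)+rJ\xi'(s_0))$ and exhibits \emph{two explicit preimages} of that centre under $h$, contradicting injectivity directly. You instead invoke invariance of domain to make $h^{-1}$ continuous, show the centre $c_0$ lies in $\Omega=h(C_{L,\rho})$ via distance-minimisation, and then obtain a contradiction because $c_0$ is simultaneously a limit of $h$ along a sequence escaping $C_{L,\rho}$. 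Your device is slicker and avoids the preliminary curvature estimate, at the cost of importing a nontrivial topological theorem; the paper stays entirely within elementary differential geometry.

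One small point: in the body--body case you should dispose of $s_1=s_2$ explicitly before invoking $\xi(s_1)\neq\xi(s_2)$ (it is trivial, since $h(s,\cdot)$ is affine and injective for fixed $s$), so that the appeal to Proposition~\ref{756} is clean.
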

\begin{proof}
Let us assume that the two disks condition holds and let us consider two points $(s_1,y_1)\neq(s_2,y_2)$ in $C_{L,\rho}$\,. If $0\leq s_1=s_2\leq L$, then it must be $\norm{y_1-y_2}>0$, and therefore $|h(s_1,y_1)-h(s_1,y_2)|=\norm{y_1-y_2}>0$. If $-\rho<s_1=s_2<0$, then $h(s_1,y_1)\neq h(s_1,y_2)$ since $h$ is an isometry on $C_{L,\rho}\cap\{s<0\}$. The same conclusion holds if $L<s_1=s_2<L+\rho$.

\par Assume now that $0\leq s_1<s_2\leq L$. If $y_1=y_2=0$, then $h(s_1,y_1)=\xi(s_1)\neq\xi(s_2)=h(s_2,y_2)$, where the inequality follows from the fact that the curve $\xi(\cdot,t)$ is injective by Proposition \ref{756}. If $y_1\neq0=y_2$\,, then $h(s_1,y_1)$ belongs to one of the disks $B^1(s_1)\,, B^2(s_1)$ introduced in condition (a) of Definition \ref{755}. It follows that $h(s_2,y_2)=\xi(s_2)\notin B^1(s_1)\cup B^2(s_1)$\,, hence $h(s_1,y_1)\neq h(s_2,y_2)$. The same conclusion holds if $y_1=0\neq y_2$\,.

\par Let us consider now the case $0\leq s_1<s_2\leq L$ and $y_1\neq0\neq y_2$\,. Define $S_i:=\{\xi(s_i)+yJ\xi'(s_i):0<(\sign y_i)y<\rho\}$ for $i=1,2$. Let us prove that
\begin{equation}\label{760}
S_1\cap S_2=\emptyset.
\end{equation}
Let $p_i:=\xi(s_i)$ and let $D_i$ be the open disk with center $c_i:=\xi(s_i)+\sign(y_i)\rho J\xi'(s_i)$ and radius $\rho$. Note that $c_i$ and $p_i$ are the endpoints of $S_i$ and that $p_1\neq p_2$ since the map $s\mapsto\xi(s)$ is injective by Proposition \ref{756}. If $c_1=c_2$\,, then $S_1\cap S_2=\emptyset$ because $S_1, S_2$ are radii of the same circle with different endpoints.

\par We consider now the case $c_1\neq c_2$\,. Since $D_1$ is one of the disks $B^1(s_1), B^2(s_1)$, by condition (a) of Definition \ref{755}, we have that $p_1=\xi(s_1)\in\partial D_1$ and $p_2=\xi(s_2)\notin D_1$\,. Similarly, we prove that $p_2\in\partial D_2$ and $p_1\notin D_2$\,. Therefore, $p_1\in\partial D_1\setminus D_2$ and $p_2\in\partial D_2\setminus D_1$\,.

\par Assume by contradiction that $S_1$ and $S_2$ meet at some point $p$, which must belong to $D_1\cap D_2$\,. Let $z$ be the intersection between $\partial D_2$ and the half-line stemming from $c_2$ and containing $c_1$; under our assumptions, we have $z\in D_1$\,. Since $p\in D_1\cap D_2$ and $p_1\in\partial D_1\setminus D_2$\,, there exists a unique point $q\in\partial D_2\cap D_1$ on the segment joining $p$ and $p_1$\,. Now, the half-line through $p$ stemming from $c_2$ meets $\partial D_2$ on the smallest arc $\Gamma$ with endpoints $q$ and $z$. Since $q,z\in D_1$ and the disks have the same radius, we have $\Gamma\subset D_1$ (see Fig.~\ref{f9}). The previous argument shows that $p_2\in\Gamma$, which contradicts the condition $p_2\in\partial D_2\setminus D_1$\,. This concludes the proof of the equality $S_1\cap S_2=\emptyset$ in the case $0\leq s_1<s_2\leq L$ and $y_1\neq0\neq y_2$\,, and implies that $h(s_1,y_1)\neq h(s_2,y_2)$.
\begin{figure}[h]
\centering
\includegraphics[scale=.8]{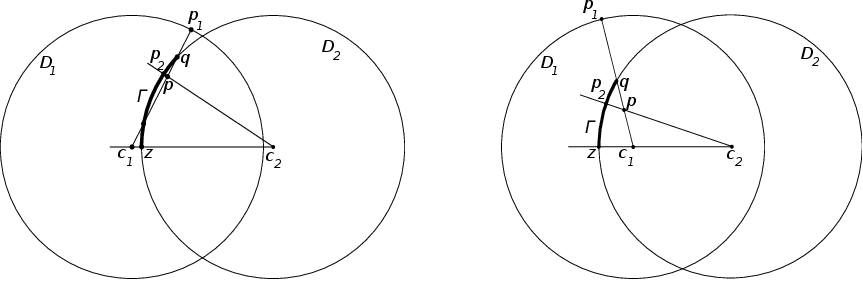}
\caption{Injectivity for $0\leq s_1<s_2\leq L$ and $y_1\neq0\neq y_2$ in the case $c_1\neq c_2$\,: Two possible situations contradicting \eqref{760}.}
\label{f9}
\end{figure}
\par We consider now the case $0\leq s_1< L<s_2<L+\rho$. Assume by contradiction that $h(s_1,y_1)=h(s_2,y_2)=:p$. Observe that $p\in\frac12B^+$. Denote $p_1:=\xi(s_1)$ and $S:=\{\xi(L)+yJ\xi'(L): -\rho<y<\rho\}$. By \eqref{760} for $s_1$ and $L$ the segment with endpoints $p,p_1$ does not intersect $S$. On the other hand $\norm{p-p_1}=\norm{y_1}<\rho$. By elementary geometric arguments we find that the set $Q$ of points which can be connected to a point of $\frac12B^+$ by a segment disjoint from $S$ and of length less than $\rho$ is contained in the union $B^+\cup B^1(L)\cup B^2(L)$ (See Fig.~\ref{f10}). Therefore $p_1=\xi(s_1)\in Q$ and this violates either condition (a) or condition (b) in Definition \ref{755}. 

\par In the case $s_1=L<s_2<L+\rho$, we have $h(s_1,y_1)\in \partial(\frac12 B^+)$, while $h(s_2,y_2)\in\frac12 B^+$, so that $h(s_1,y_1)\neq h(s_2,y_2)$. The cases $-\rho<s_1<0< s_2\leq L$ and $-\rho<s_1<0=s_2$ are analogous.
\begin{figure}[h]
\centering
\includegraphics[scale=1]{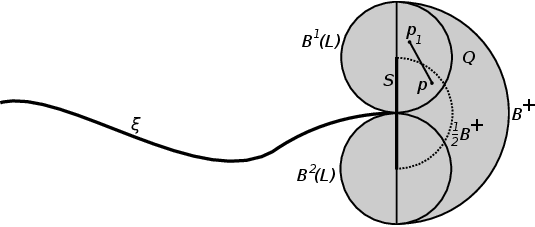}
\caption{Injectivity for $0\leq s_1\leq L<s_2<L+\rho$. The shaded region represents the set $Q$; the thick segment is the set $S$.}
\label{f10}
\end{figure}
\par The last case to consider is when $-\rho<s_1<0$ and $L<s_2<L+\rho$. 
Assume, by contradiction, that $h(s_1,y_1)=h(s_2,y_2)$. 
Since the radius of curvature of $\xi$ is always less than $\rho$, one can prove (see Lemma \ref{r12} below and Fig.\@ \ref{f34}) that
\begin{equation}\label{k10}
D^-\subset B^-\cup\bigcup_{0\le s\le \frac\pi2\rho}\big(B^1(s)\cup\{\xi(s)\}\cup B^2(s)\big)\,,
\end{equation}
where $D^-:=\{x\in \R2: d(x,\frac12 B^-){<\,}\rho\}$ and $B^1(s),\, B^2(s)$ are the open disks defined in \eqref{758}. 
\begin{figure}[!hbt]
\centering
\includegraphics[scale=.3]{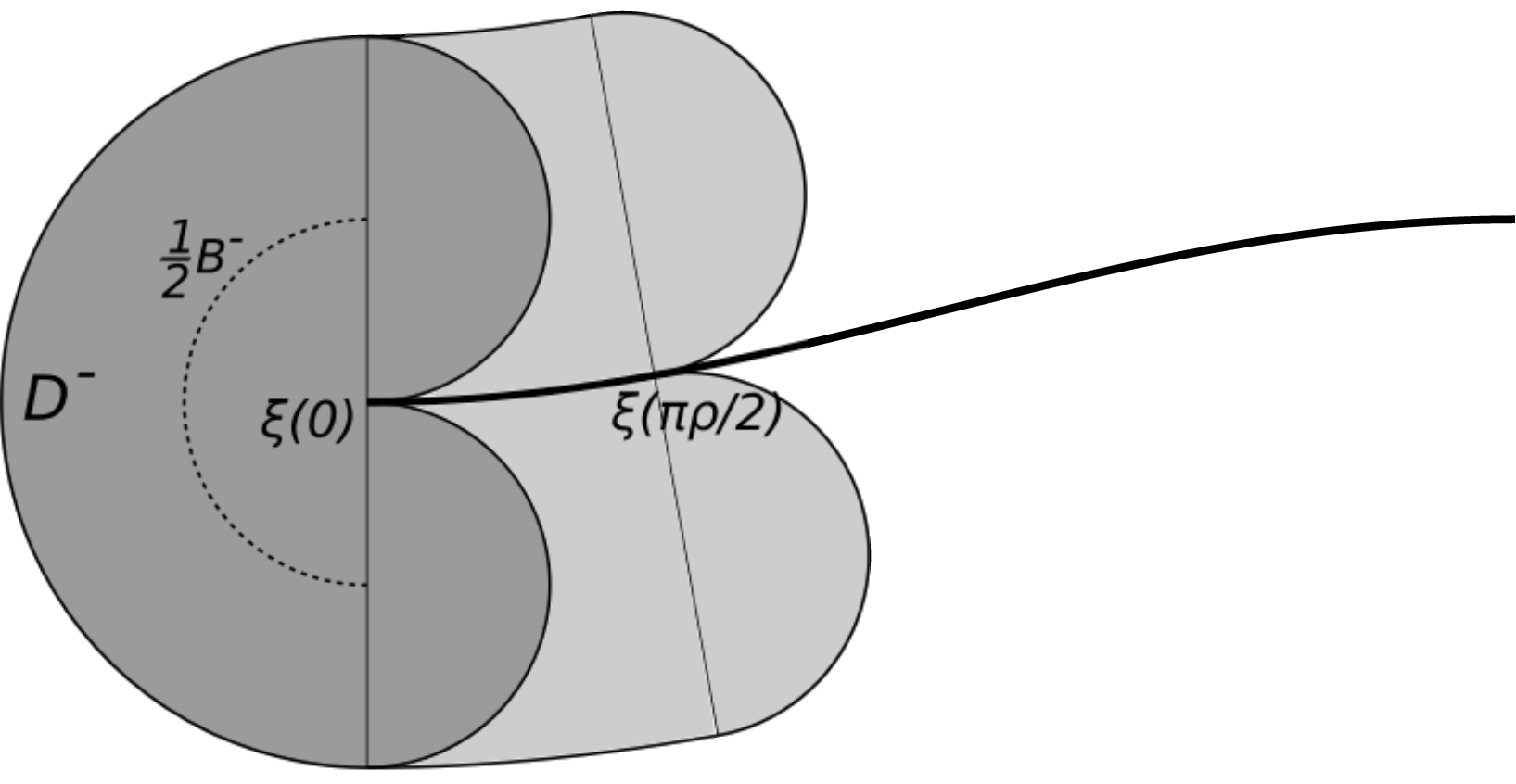}
\caption{The region $D^-$ of \eqref{k10}.}
\label{f34}
\end{figure}

\par Since  $h(s_1,y_1)\in\frac12B^-$, $h(s_1,y_1)=h(s_2,y_2)$, and $|h(s_2,y_2)-\xi(L)|<\rho$, we deduce that $\xi(L)\in D^-$. By \eqref{k10} either $\xi(L)\in B^-$ or there exists $s\in[0,\frac\pi2]$ such that $\xi(L)\in B^1(s)\cup\{\xi(s)\}\cup B^2(s)$. This contradicts Definition \ref{755} or Proposition \ref{756}, and concludes the proof in this case.
\medskip
\par Let us now assume that $h$ is injective, and consider $s\in[0,L]$. 
Let $B^1(s),\, B^2(s)$ the open disks defined in \eqref{758}. It is clear that $B^1(s)\cap B^2(s)=\emptyset$ and $\cl B{}^1(s)\cap\cl B{}^2(s)=\{\xi(s)\}$.
Denote by $N(s):=\{\xi(s)+yJ\xi'(s),y\in\R{}\}$ the normal line through the point $\xi(s)$, and let $\widetilde\xi(s):=\xi(s)+\frac1{\kappa(s)}J\xi'(s)$ be the evolute of $\xi$, i.e., the curve that contains the centers of the osculating circles to $\xi$.
Since $\xi$ is a curve of class $H^2$, the curvature is well defined almost everywhere; let $s_1\in(0,L)$ be a point at which the curvature $\kappa(s_1)$ is defined, and let $s_2\in(0,L)$ be another point.
By the injectivity of $h$, the normal lines $N(s_1)$ and $N(s_2)$ cannot meet at a distance less than $\rho$ from the curve $\xi$, and their intersection tend to $\widetilde\xi(s_1)$ as $s_2\to s_1$ (see \cite[Ex.\@7, page 23]{DoCarmo}).
This implies that $\kappa(s_1)\le 1/\rho$. We conclude that $\kappa(s)\le 1/\rho$ for a.e.\ $s\in[0,L]$.
Let $I_\rho(s):=\{\sigma\in[0,L]:|\sigma-s|<\rho\}\cap[0,L]$ be a neighborhood of $s$ of radius $\rho$ in $[0,L]$.
By standard results in Differential Geometry, $\xi(\sigma)\notin B^1(s)\cup B^2(s)$ for every $\sigma\in I_\rho(s)$. 
If now $\sigma\in[0,L]\setminus I_\rho(s)$, then again $\xi(\sigma)\notin B^1(s)\cup B^2(s)$. Indeed, assume by contradiction that
there exists $\sigma\in[0,L]\setminus I_\rho(s)$ such that $\xi(\sigma)\in B^1(s)$. Then there exists $r\in(0,\rho)$ such that
$\xi(\sigma)\in B_r(\xi(s)+rJ\xi'(s))$.
Then, let $y_1:=\min_{\sigma\in[0,L]}\{|\xi(s)+rJ\xi'(s)-\xi(\sigma)|\}<r$, and let $s_1\in[0,L]\setminus I_\rho(s)$ be the point where the minimum is attained. If $s_1\in(0,L)$, then
$$\text{either}\qquad\xi(s)+r J\xi'(s)=\xi(s_1)+y_1J\xi'(s_1)\qquad\text{or}\qquad\xi(s)+r J\xi'(s)=\xi(s_1)-y_1J\xi'(s_1).$$
This violates the injectivity of $h$. 
The cases $s_1 = 0$ and $s_1=L$ lead to a similar contradiction, taking into account the definition of $h$ near the endpoints of the segment. 
This proves that condition (a) in Definition \ref{755} holds.

\par To prove that condition (b) holds,  we assume, by contradiction, that there exists $s_0\in[0,L]$ such that $\xi(s_0)\in B^-$.
We first observe that, for $s\in[0,\pi\rho]$, the point $\xi(s)$ lies on the opposite side of $B^-$ with respect to its diameter $\{\xi(0)+yJ\xi'(0):-\rho<y<\rho\}$, so that 
$\xi(s)\notin B^-$ for $s\in[0,\pi\rho]$. Then $s_0$ belongs be the closed set $\Sigma$ of points $s\in[\pi\rho,L]$ such that $\xi(s)$ lies in the closure of 
$B^-$. Let $s_1$ be the minimum point of 
$$
\min_{s\in\Sigma} |\xi(s)-\xi(0)|.
$$
Since $\xi(s)\notin B^1(0)\cup B^2(0)$ for every $s\in [0,L]$, the point $\xi(s_1)$ does not belong to the diameter $\{\xi(0)+yJ\xi'(0):-\rho<y<\rho\}$. Therefore $\xi(s_1)\in B^-$ and $s_1$ belongs to the interior of $\Sigma$. This implies that $\xi(s_1)-\xi(0)$ is orthogonal to $\xi'(s_1)$, hence $\xi(s_1)-\xi(0)=yJ\xi'(s_1)$ for some $y\in (-2\rho,2\rho)$. Then $h(s_1,-\frac y2)=\xi(s_1)-\frac y2J\xi'(s_1)\in \frac12 B^-$. This violates the injectivity of $h$ and concludes the proof of the proposition.
\end{proof}
\begin{lemma}\label{r12}
Let $\rho>0$ and let $\xi\in\Hp2{}(0,\frac\pi2\rho;\R2)$ with $\norm{\xi'(s)}=1$ for every $s\in[0,\frac\pi2\rho]$ and $\norm{\xi''(s)}\le 1/\rho$  for a.e.\ $s\in[0,\frac\pi2\rho]$. Then \eqref{k10} holds.
\end{lemma}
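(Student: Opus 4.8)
The plan is to reduce \eqref{k10} to a purely planar covering statement and then carry out the covering using the bound on the turning of $\xi$.

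\emph{Normalization and description of $D^-$.} Since \eqref{k10} is invariant under rigid motions, I would place the origin at $\xi(0)$ and take $\xi'(0)=e_1$. Then $\tfrac12B^-$ is the open half disk $\{x_1<0\}\cap B_\rho(0)$, its closure $K:=\{x_1\le0\}\cap\overline{B_\rho(0)}$ is convex, so $D^-=\{x:\dist xK<\rho\}=K+B_\rho(0)$. From this one reads off that $D^-\cap\{x_1\le0\}=B^-$ (if $x=p+v$ with $p\in K$, $|v|<\rho$, and $x_1\le0$, then $|x|\le|p|+|v|<2\rho$), while $D^-\cap\{x_1\ge0\}$ is the lens $\mathcal C:=\{(x_1,x_2):0\le x_1<\rho,\ |x_2|<\rho+\sqrt{\rho^2-x_1^2}\}$. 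Hence \eqref{k10} is equivalent to $\mathcal C\subset\bigcup_{0\le s\le\frac\pi2\rho}\big(B^1(s)\cup\{\xi(s)\}\cup B^2(s)\big)$.

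\emph{Consequences of the curvature bound, and first reductions.} Writing $\xi'(s)=(\cos\vartheta(s),\sin\vartheta(s))$ with $\vartheta(0)=0$, the hypothesis on $\xi''$ gives $|\vartheta'|=|\kappa|\le1/\rho$ a.e., hence $|\vartheta(s)|\le s/\rho\le\pi/2$ on $[0,\frac\pi2\rho]$; therefore $\xi_1$ is nondecreasing, $\xi_1(s)\ge\rho\sin(s/\rho)$ (so $\xi_1(\frac\pi2\rho)\ge\rho$), and $|\xi_2(s)|\le\rho(1-\cos(s/\rho))$. Now the easy part of $\mathcal C$: if $q\in\mathcal C$ with $q_2\ge\rho$, then $q_2-\rho<\sqrt{\rho^2-q_1^2}$ forces $q_1^2+(q_2-\rho)^2<\rho^2$, i.e.\ $q\in B_\rho((0,\rho))=B^1(0)$; symmetrically $q_2\le-\rho$ gives $q\in B^2(0)$; and $q_1=0$ gives directly $q\in B^1(0)\cup\{\xi(0)\}\cup B^2(0)$. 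Using also the reflection $x_2\mapsto-x_2$ (which preserves the hypotheses and swaps $B^1\leftrightarrow B^2$), it remains to cover the points $q$ with $0<q_1<\rho$ and $0\le q_2<\rho$; moreover we may assume $q\notin B^1(0)$ (otherwise we are done), which since $q_2<\rho$ forces $q_2<q_1$.

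\emph{The covering.} Fix such a $q$ and let $s_0\in[0,\frac\pi2\rho]$ minimize $s\mapsto|q-\xi(s)|$. If $s_0$ is an interior point, or if $s_0=\frac\pi2\rho$ and $\langle q-\xi(s_0),\xi'(s_0)\rangle=0$, then $q-\xi(s_0)=t\,J\xi'(s_0)$ with $|t|=|q-\xi(s_0)|\le|q-\xi(0)|=|q|<2\rho$; since $|q-(\xi(s_0)\pm\rho J\xi'(s_0))|=|\,|t|-\rho\,|<\rho$ unless $t=0$, this puts $q$ in $B^1(s_0)$, in $\{\xi(s_0)\}$, or in $B^2(s_0)$ (this part uses no curvature bound). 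The value $s_0=0$ is impossible here, since the first‑order condition at the left endpoint would give $q_1=\langle q-\xi(0),\xi'(0)\rangle\le0$. This leaves the genuinely hard case: the minimum is attained only at the tip $s_0=\frac\pi2\rho$ with $\langle q-\xi(s_0),\xi'(s_0)\rangle>0$ — so $q$ lies ``beyond'' the tip in the tangent direction while $q_1<\rho\le\xi_1(s_0)$, which forces the curve to have turned by a definite amount and $q$ to lie on the concave side of the final tangent. Here I would \emph{not} use the disks at $s_0$, but run a continuity argument on the family $s\mapsto B^1(s)$ (resp.\ $B^2(s)$): its center $c^1(s)=\xi(s)+\rho J\xi'(s)$ satisfies $(c^1)'(s)=(1-\rho\kappa(s))\,\xi'(s)$, a \emph{non‑negative} multiple of $\xi'(s)$, so $c^1$ moves monotonically along the tangent directions of $\xi$; starting from $c^1(0)=(0,\rho)$, for which $|q-c^1(0)|\ge\rho$, one shows — using the curvature bound and the explicit shape of $\mathcal C$ — that $s\mapsto|q-c^1(s)|$ falls below $\rho$ for some $s\le\frac\pi2\rho$. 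The estimates are organized as a comparison with the two extremal circular arcs $\vartheta(s)=\pm s/\rho$ of Fig.~\ref{f34}: for $\vartheta(s)=s/\rho$ the disk $B^1(s)$ is the fixed osculating disk $B_\rho((0,\rho))$ and the centers $c^2(s)$ sweep a quarter of the circle of radius $2\rho$ about $(0,\rho)$, and one checks directly that these, together with $B^-$, contain $D^-$; any admissible $\xi$ lies ``between'' the two extremal arcs and the covering persists.

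\emph{Main obstacle.} The reductions and the interior/perpendicular subcases are soft; the real work is this last case. Points of $\mathcal C$ near its right edge $x_1\to\rho^-$ that lie beyond the tip of $\xi$ must be captured by a rolling disk $B^1(s)$ or $B^2(s)$ with $s$ \emph{not} the nearest‑point parameter, and locating such an $s$ — equivalently, showing the monotone family of centers $c^i(s)=\xi(s)\mp\rho J\xi'(s)$ passes within $\rho$ of $q$ while $s$ still ranges over $[0,\frac\pi2\rho]$ — is where the bound $\norm{\xi''}\le1/\rho$ and the precise geometry of Fig.~\ref{f34} are used essentially.
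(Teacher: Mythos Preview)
Your normalization, the explicit description of $D^-$, the easy reductions, and the nearest-point argument for interior minimizers are all correct and match what the paper does for its region $C^-$. The gap is exactly where you locate it: the ``tip'' case, when the minimum of $|q-\xi(\cdot)|$ over $[0,\tfrac\pi2\rho]$ is attained only at the right endpoint with $\langle q-\xi(\tfrac\pi2\rho),\xi'(\tfrac\pi2\rho)\rangle>0$. What you offer there is a sketch, not a proof. Knowing that $(c^1)'=(1-\rho\kappa)\xi'$ is a nonnegative multiple of $\xi'$ does not by itself imply that $|q-c^1(s)|$ drops below $\rho$ for some $s\le\tfrac\pi2\rho$, and the assertion that ``the covering persists'' for a general $\xi$ lying between the two extremal arcs is precisely the statement to be proved, not an argument for it. I do not see how to close this without a further geometric idea.

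The paper avoids your tip case entirely by choosing a different distinguished parameter. Instead of the endpoint $\tfrac\pi2\rho$, it takes $s_0$ to be the unique value at which $\xi$ crosses the vertical segment $S$ joining $(\rho,-\rho)$ to $(\rho,\rho)$ (existence uses your inequality $\xi_1(s)\ge\rho\sin(s/\rho)$). The substantive step you are missing is a short contradiction showing that the single tangent disk $B^1(s_0)$ contains the corner $(\rho,\rho)$: if not, $\partial B^1(s_0)$ meets $S$ in two points, so its center is the apex of an isosceles triangle with base on $S$ and legs of length $\rho$; elementary geometry forces this apex into the astroid $(-\rho,\rho)^2\setminus\bigcup B_\rho((\pm\rho,\pm\rho))$, hence within distance $\rho$ of the origin, so $\xi(0)=0\in B^1(s_0)$ --- contradicting the curvature bound, which prevents $\xi$ from entering its own tangent disk of radius $\rho$. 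Once $(\rho,\rho)\in B^1(s_0)$, that single disk already covers the whole part $C^+$ of the curvilinear triangle weakly above the normal line $N(s_0)$. For the remaining part $C^-$ the nearest-point argument on $[0,s_0]$ works without obstruction, because every $x\in C^-$ satisfies $\langle x-\xi(s_0),\xi'(s_0)\rangle<0$ by construction, forcing the minimizer into the open interval $(0,s_0)$. In short, the paper's decomposition along $N(s_0)$ is engineered so that your endpoint case never arises, and the astroid argument is the missing ingredient.
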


\begin{proof} Let $\xi_1(s)$ and $\xi_2(s)$ be the coordinates of $\xi(s)$ and let $\vartheta(s)$ be the oriented angle between $e_1:=(1,0)$ and $\xi'(s)$.
It is well known that $|\vartheta'(s)|= \norm{\xi''(s)}\le 1/\rho$. Assume, for simplicity, that $\xi(0)=0$ and $\xi'(0)=e_1$.
By standard results in Differential Geometry the curve $\xi$ does not intersect the open disks $B_\rho(\pm\rho\, e_2)$, where $e_2:=(0,1)$. By integration we obtain
$$
\xi_1(s)=\int_0^s \cos\vartheta(\sigma)\,\de \sigma\,.
$$
Let us prove that
\begin{equation}\label{k15}
\rho \sin ( s/\rho) \le \xi_1(s)
\end{equation}
for every $s\in[0,\frac\pi2]$. Since the inequality is true for $s=0$, it is enough to show that  $\cos(s/\rho)\le \xi_1^\prime(s)=\cos\vartheta(s)=
\cos|\vartheta(s)|$ for every $s\in[0,\frac\pi2]$. Since $\xi'(s)=(\cos\vartheta(s), \sin\vartheta(s))$, we have $|\vartheta'(s)|\le 1/\rho$, hence $|\vartheta(s)|\le s/\rho$, so that $\cos(s/\rho)\le \cos|\vartheta(s)|$ by the monotonicity of $\cos$ in $[0,\frac\pi2]$. This concludes the proof of 
\eqref{k15}.  

Let $S$ be the segment with endpoints $(\rho,\pm \rho)$, which belong to the circles $\partial B_\rho(\pm\rho\, e_2)$. Inequality \eqref{k15} implies that  the curve $\xi$ intersects the segment $S$.
Since the bound on the curvature implies that $\xi$ cannot have a vertical tangent, except when $\xi$ is contained in $\partial B_\rho(\pm\rho\, e_2)$, the intersection point is unique.
Let $s_0\in [0,\rho\frac\pi2]$ be the value of the arc length parameter of this intersection point and let  $N(s_0):=\{\xi(s_0)+yJ\xi'(s_0),y\in\R{}\}$ be the corresponding normal line to $\xi$.

If $\xi(s_0)=(\rho,\pm\rho)$, then the bound on the curvature implies that  $\xi$ is contained in $\partial B_\rho(\pm\rho\, e_2)$ and the statement of the lemma is easily checked.
So we may assume that $\xi(s_0)\neq (\rho,\pm\rho)$. We may also assume that $\xi_2'(s_0)\ge 0$. If not, we just reverse the orientation of the $x_2$-axis.

Let $B^1(s_0)$ be the tangent disk to $\xi$ at $\xi(s_0)$ defined by $B^1(s_0):=B_\rho(\xi(s_0)+\rho J\xi'(s_0))$. We claim that
\begin{equation}\label{k16}
(\rho, \rho )\in B^1(s_0)\,.
\end{equation}
If $\xi_2'(s_0)= 0$, then $(\rho, \rho )\in N(s_0)$ and its distance from $\xi(s_0)$ is less than $\mathrm{length}(S)=2\rho$, which implies \eqref{k16}.
If $\xi_2'(s_0)\neq 0$, we argue by contradiction. Assume that \eqref{k16} is not satisfied. 
Then $\partial B^1(s_0)$ intersects $S$ in $\xi(s_0)$ and in another point $\tilde\xi$ between $\xi(s_0)$ and $(\rho, \rho )$.
Therefore the center $c_1$ of the disk $B^1(s_0)$ is the vertex of an isosceles triangle with basis contained in $S$ and equal sides of length $\rho$.
Elementary geometric arguments show that this vertex must belong to the astroid obtained by removing the four disks $B_\rho((\pm \rho,\pm \rho))$ from the square $(-\rho,\rho)\times(-\rho,\rho)$  (see Fig.~\ref{f33}).
\begin{figure}[htb]
\centering
\includegraphics[scale=.25]{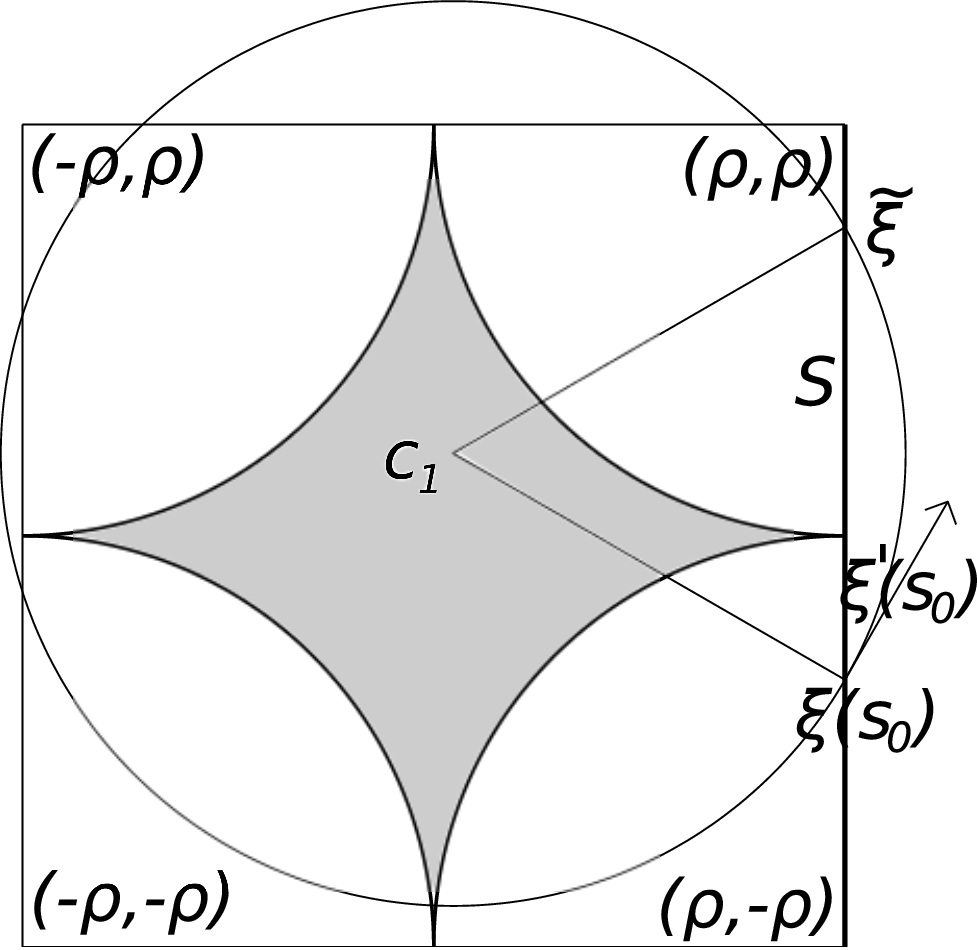}
\caption{The absurd situation in the proof by contradiction. The circle is $B^1(s_0)$, while the curve is not shown.}
\label{f33}
\end{figure}
Therefore the distance from the origin of the center of $B^1(s_0)$ is less than $\rho$. This implies that $0\in B^1(s_0)$.
On the other hand, since $\xi$ is tangent to this disk at $\xi(s_0)$, the bound on the curvature implies that $\xi(s)\notin B^1(s_0)$ for every $s\in[0,\frac\pi2]$.
This contradicts the assumption $\xi(0)=0$ and concludes the proof of \eqref{k16}.

Let $C$ be the curvilinear triangle obtained by removing the disks $B_\rho(\pm\rho\, e_2)$ from the rectangle $(0,\rho)\times(-\rho,\rho)$.
Let $C^+$ and $C^-$ be the parts of $C$ weakly above and strictly below $N(s_0)$ and let $p$ be the intersection of $N(s_0)$ and $\partial B_\rho(\rho\, e_2)$ contained in the closure of $C$.
Since the distance between $p$ and $\xi(s_0)$ is less than $2\rho$, we deduce that $p\in B^1(s_0)$. 
Since $(\rho,\rho)\in B^1(s_0)$ and $\xi(s_0)\in \partial B^1(s_0)$, we obtain that $C^+$ is contained in $B^1(s_0)$.

Let us prove that
\begin{equation*}
C^-\subset \bigcup_{0\le s\le s_0}\big(B^1(s)\cup\{\xi(s)\}\cup B^2(s)\big)
\end{equation*}
Let us fix $x\in C^-$ and let $s_1$ be a minimizer of
$$
\min_{0\le s\le s_0}|x-\xi(s)|.
$$
Since $\scp{x}{\xi'(0)}>0$ and $\scp{x-\xi(s_0)}{\xi'(s_0)}<0$, we have $s_1\in (0,s_0)$, hence the orthogonality condition
$$
\scp{x-\xi(s_1)}{\xi'(s_1)}=0.
$$
Since $|x-\xi(s_1)|<|x|<2\rho$, the point $x$ belongs to  $B^1(s_1)\cup\{\xi(s_1)\}\cup B^2(s_1)$, which concludes the proof.
\end{proof}

We now prove a result stating that a bound on the angle $\vartheta$ formed by the tangent with the first axis implies the non self-intersection of the swimmer.
\begin{lemma}\label{110}
Let $\vartheta
\in\Cl1{}([0,L])$, let $\kappa_0:=\max\left\{|\vartheta'(s)|:s\in[0,L]\right\}$, and let
\begin{equation*}
\xi(s)=\int_0^s \vec{\cos\vartheta(\sigma)}{\sin\vartheta(\sigma)}\de \sigma.
\end{equation*}
Assume $\norm{\vartheta
(s)}<\pi/4$ for every $s\in[0,L]$. Then, $\xi$ satisfies the two disks condition with radius $\rho$, for every $0<\rho\leq 1/\kappa_0$.
\end{lemma}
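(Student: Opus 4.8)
The plan is to verify, for an arbitrary fixed $\rho>0$ with $\rho\le1/\kappa_0$ (the case $\kappa_0=0$, in which $\xi$ is a segment, being trivial), the two conditions (a) and (b) of Definition~\ref{755}. First some preliminaries I will use throughout. Since $\xi'(s)=(\cos\vartheta(s),\sin\vartheta(s))$ we have $\xi''(s)=\vartheta'(s)J\xi'(s)$, so the curvature of $\xi$ is $\kappa(s)=\scp{\xi''(s)}{J\xi'(s)}=\vartheta'(s)$ and $\norm{\kappa(s)}\le\kappa_0\le1/\rho$, i.e. the radius of curvature is everywhere $\ge\rho$. Moreover $\norm{\vartheta(s)}<\pi/4$ gives $\xi_1'(s)=\cos\vartheta(s)>1/\sqrt2>0$, so $s\mapsto\xi_1(s)$ is a strictly increasing bijection of $[0,L]$ onto an interval, $\xi$ is the graph of a $\Cl1{}$ function with slope bounded by $1$, and in particular $\xi$ is injective.

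\emph{Condition (b)} is the easy part. I would take $B^-$ to be the open half of $B_{2\rho}(\xi(0))$ contained in $\{x\in\R2:\scp{x-\xi(0)}{\xi'(0)}<0\}$ (its diameter then lies along the normal at $s=0$), and symmetrically $B^+$ the open half of $B_{2\rho}(\xi(L))$ in $\{x\in\R2:\scp{x-\xi(L)}{\xi'(L)}>0\}$. For $0<\sigma\le L$,
\[
\scp{\xi(\sigma)-\xi(0)}{\xi'(0)}=\int_0^\sigma\cos\bigl(\vartheta(\tau)-\vartheta(0)\bigr)\,\de\tau>0,
\]
since $\norm{\vartheta(\tau)-\vartheta(0)}\le\norm{\vartheta(\tau)}+\norm{\vartheta(0)}<\pi/2$; hence $\xi(\sigma)\notin B^-$, and $\xi(0)\notin B^-$ is obvious. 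The symmetric computation gives $\xi(\sigma)\notin B^+$ for $0\le\sigma<L$. This settles (b).

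\emph{Condition (a).} Fix $s$; by \eqref{758} the disks are $B^i(s)=B_\rho(c_i)$ with $c_1,c_2:=\xi(s)\pm\rho J\xi'(s)$, which are disjoint open $\rho$-disks with $\xi(s)\in\partial B^1(s)\cap\partial B^2(s)$. The content is $\xi(\sigma)\notin B^1(s)\cup B^2(s)$ for all $\sigma$; I argue for $B^1(s)$. Set $G(\sigma):=\norm{\xi(\sigma)-c_1}^2-\rho^2$, so $G(s)=0$, $G'(s)=2\scp{\xi(s)-c_1}{\xi'(s)}=0$ (the radius is normal to the curve), and $G''(\sigma)=2+2\kappa(\sigma)\scp{\xi(\sigma)-c_1}{J\xi'(\sigma)}$. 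Where $G(\sigma)<0$, i.e. $\norm{\xi(\sigma)-c_1}<\rho$, one gets $\norm{\kappa(\sigma)\scp{\xi(\sigma)-c_1}{J\xi'(\sigma)}}\le\norm{\kappa(\sigma)}\norm{\xi(\sigma)-c_1}<1$, so $G''(\sigma)>0$: $G$ is strictly convex on any interval where it is negative. With $G(s)=G'(s)=0$ this already excludes $\{G<0\}$ on any subinterval of $[0,L]$ having $s$ as an endpoint — the \emph{local} non-intersection. For the \emph{global} part I would use the graph structure: $B^1(s)$ has $x_1$-extent in an interval $I$ of length $2\rho$, so — since $\xi_1$ is strictly increasing with $\xi_1'>1/\sqrt2$ — the set $\{\sigma:\xi_1(\sigma)\in I\}$ is one subinterval $[\sigma_-,\sigma_+]$ of length $<2\sqrt2\,\rho$, containing $s$, with $G(\sigma_\pm)\ge0$ and $\xi(\sigma)\notin\cl{B^1(s)}$ for $\sigma\notin[\sigma_-,\sigma_+]$; as the total turning of $\xi$ on $[\sigma_-,\sigma_+]$ is $\le(\sigma_+-\sigma_-)/\rho<\pi$, the curve is strictly monotone there in a fixed direction, and this with the strict convexity of $G$ on $\{G<0\}$ and $G(s)=G'(s)=0$ forces $G\ge0$ on $[\sigma_-,\sigma_+]$. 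The same works for $B^2(s)$, so (a) holds, and with (b) the lemma follows.

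\emph{Main obstacle.} The only non-routine step is the global part of (a): preventing $\xi$ from re-entering a tangent disk after it has left it. The convexity of $G$ disposes of the first excursion but not of subsequent ones, and it is exactly here that the hypothesis $\norm\vartheta<\pi/4$ is indispensable, forcing (through the graph structure) the arc length of a hypothetical return to be so short that the curvature bound precludes it. An equivalent, possibly cleaner, organization is to prove instead that the map $h$ of \eqref{1025} is injective and then quote Proposition~\ref{757}: a coincidence $h(s_1,y_1)=h(s_2,y_2)=p$ with $s_1<s_2$ and $\norm{y_i}<\rho$ makes $\Psi(\sigma):=\scp{\xi(\sigma)-p}{\xi'(\sigma)}$ vanish at $s_1$ and $s_2$ while $\Psi'(\sigma)=1+\vartheta'(\sigma)\scp{\xi(\sigma)-p}{J\xi'(\sigma)}$, and one must show that $s_2-s_1$ is small enough — again via $\xi_1'>1/\sqrt2$ and $\norm{p-\xi(s_i)}<\rho$ — that the curve stays close enough to $p$ between $s_1$ and $s_2$ for $\Psi'$ to be positive there, contradicting $\Psi(s_1)=\Psi(s_2)=0$. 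Carrying out this last estimate carefully is the heart of the proof.
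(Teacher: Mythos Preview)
Your approach is the same as the paper's: both deduce condition~(a) from the fact that $\xi$ is a graph over the $x_1$-axis with curvature bounded by $1/\rho$, and then construct the half disks for~(b). You in fact give considerably more detail than the paper, whose proof of~(a) is a one-line appeal to ``standard results in Differential Geometry'' and whose treatment of~(b) is the single word ``straightforward''. Your argument for~(b) and your local argument for~(a) via $G(\sigma)=\norm{\xi(\sigma)-c_1}^2-\rho^2$ are both correct.

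The gap you flag as the ``main obstacle'' --- ruling out a later re-entry of $\xi$ into the tangent disk --- is exactly the step the paper does not prove either. However, neither of your two sketches closes it as written: the bounds ``$\sigma_+-\sigma_-<2\sqrt2\,\rho$ with total turning $<\pi$'' do not by themselves force $G\ge0$ on $[\sigma_-,\sigma_+]$ (convexity of $G$ on $\{G<0\}$ only rules out the component adjacent to $s$), and in the $h$-injectivity route the crude estimate $\norm{\xi(\sigma)-p}<(2\sqrt2+1)\rho$ is far too weak to make $\Psi'>0$.

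There is a much cleaner way to finish, in the spirit of inequality~\eqref{k15} in Lemma~\ref{r12}. Rotate so that $\xi'(s)=e_1$ and $\xi(s)=0$; since $\norm{\vartheta}<\pi/4$, the new tangent angle $\psi=\vartheta-\vartheta(s)$ satisfies $\norm{\psi}<\pi/2$, so the curve is \emph{still a graph} in the rotated frame and $X(\sigma)=\int_s^\sigma\cos\psi$ is strictly increasing. For $\sigma\in[s,s+\tfrac{\pi}{2}\rho]$ one has $\norm{\psi(\tau)}\le(\tau-s)/\rho$, hence $\cos\psi(\tau)\ge\cos\!\big((\tau-s)/\rho\big)$ and $\sin\psi(\tau)\le\sin\!\big((\tau-s)/\rho\big)$, which integrate to
\[
X(\sigma)\ \ge\ \rho\sin\!\tfrac{\sigma-s}{\rho},\qquad
Y(\sigma)\ \le\ \rho\Big(1-\cos\!\tfrac{\sigma-s}{\rho}\Big).
\]
Since $c_1=(0,\rho)$, these give $G(\sigma)=X^2+(\rho-Y)^2-\rho^2\ge\rho^2\sin^2+\rho^2\cos^2-\rho^2=0$ on $[s,s+\tfrac{\pi}{2}\rho]$. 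For $\sigma>s+\tfrac{\pi}{2}\rho$ (if any), monotonicity of $X$ gives $X(\sigma)\ge X(s+\tfrac{\pi}{2}\rho)\ge\rho$, so again $G(\sigma)\ge0$. The cases $\sigma<s$ and the disk $B^2(s)$ are symmetric. This replaces both of your sketches and completes~(a) without further estimates.
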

\begin{proof}
Notice, in the first place, that $|\xi'(s)|=1$ for every $s\in[0,L]$, so that $\xi$ is a regular curve parametrized by arc length. 
The condition $|\vartheta(s)|<\pi/4$ for all $s\in[0,L]$ implies that $\xi$ is a graph with respect to the $x_1$-axis. 
Given $\rho$, with $0<\rho\leq 1/\kappa_0$, we define the open disks $B^1(s):=B_{\rho}(\xi(s)+\rho J\xi'(s))$ and $B^2(s):=B_{\rho}(\xi(s)-\rho J\xi'(s))$, as in \eqref{758}. 
Since $\xi$ is a graph and its curvature is bounded by $\kappa_0$, the disks
 $B^1(s)$ and $B^2(s)$ satisfy condition (a) in Definition \ref{755}. 
Finally, the construction of $B^-$ and $B^+$ as in condition (b) of Definition \ref{755} is straightforward.
\end{proof}
The preceding lemma will be useful in Section \ref{controllability} to check that the deformations we construct to prove the controllability of the swimmer are admissible.

\section{Controllability}\label{controllability}
\par In this Section we show that the swimmer is controllable, i.e., it is possible to prescribe a self-propelled motion that takes it from a given initial state $\chi_\init$
to a given final state $\chi_\fin$\,.
More precisely, we prove the following theorem.
\begin{theorem}\label{300}
Let $\rho>0$ and let $\chi_\init$\,, $\chi_\fin\in H^2(0,L;\R2)$, with  $|\chi_\init'(s)|=|\chi_\fin'(s)|=1$ for every $s\in[0,L]$. 
Assume that $\chi_\init$ and $\chi_\fin$ satisfy the two disks condition with radius $\rho$ (see Definition \ref{755}).
Then, there exists $\chi \in X_1$\,, 
satisfying the force and torque balance \eqref{105}, such that $\chi(s,0)=\chi_\init(s)$ and $\chi(s,T)=\chi_\fin(s)$ for every $s\in[0,L]$, 
and such that for every $t\in[0,T]$ the curve $\chi(\cdot,t)$ satisfies the two disks condition with radius $\rho$.
\end{theorem}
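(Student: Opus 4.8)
The plan is to prove the theorem constructively, by factoring the transfer from $\chi_\init$ to $\chi_\fin$ into a chain of elementary maneuvers, each of which is a motion $\chi\in X_1$ satisfying the force and torque balance \eqref{105}, and then concatenating them (after a suitable reparametrization in time, which is legitimate by the rate-independence property of Remark \ref{rem}). The elementary maneuvers are: (1) straightening the initial curved configuration $\chi_\init$ into a straight segment $\Sigma_\init$; (2) rotating a straight segment about its barycenter so as to align it with the axis of the target segment $\Sigma_\fin$; (3) translating a straight segment along its own axis; (4) the inverse of a straightening, i.e.\ bending $\Sigma_\fin$ back into $\chi_\fin$. Steps (1) and (4) are produced by prescribing a shape history $\xi(\cdot,t)$ that interpolates (in $X_1$) between the given shape and the straight segment — e.g.\ by letting the angle function $\vartheta(s,t)$ be a convex combination, in $t$, of the angle function of $\chi_\init$ (resp.\ $\chi_\fin$) and the zero function — and then invoking Theorem \ref{0001} to obtain the accompanying rigid motion $r(t)$ so that $\chi=r\circ\xi$ solves \eqref{105}. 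Steps (2) and (3) are the motion-planning problems solved explicitly in Sections \ref{rotation} and \ref{translation}, using travelling bending waves.

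First I would set up the concatenation mechanism: given finitely many motions $\chi_1,\dots,\chi_n\in X_1$, each defined on $[0,T]$, each satisfying \eqref{105}, and with $\chi_k(\cdot,T)=\chi_{k+1}(\cdot,0)$ for all $k$, one rescales each $\chi_k$ in time to live on a subinterval $[t_{k-1},t_k]$ of a partition of $[0,T]$ and glues them. The glued map is in $X_1$: on each subinterval it is in $X_1$, the arc-length constraint $|\chi'|=1$ is preserved, and at the junction points the match of traces together with $\chi\in H^1(0,T;L^2(0,L))$ (see \eqref{3101}) guarantees the required time regularity; the force/torque balance holds a.e.\ because it holds a.e.\ on each piece and the partition points form a null set. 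This reduces the theorem to producing the four maneuvers above with matching endpoints.

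Next I would carry out the two straightening steps. Write $\chi_\init(s)=\chi_\init(0)+\int_0^s(\cos\vartheta_\init,\sin\vartheta_\init)\,\de\sigma$ with $\vartheta_\init\in H^1(0,L)$ (possible since $|\chi_\init'|=1$ and $\chi_\init\in H^2$), and similarly for $\chi_\fin$. Define the shape history by letting the angle function be $\vartheta(s,t)=(1-\lambda(t))\vartheta_\init(s)$ for a smooth $\lambda\colon[0,T]\to[0,1]$ with $\lambda(0)=0$, $\lambda(T)=1$, and set $\xi(s,t):=-\overline{\cdot}+\int_0^s(\cos\vartheta(\sigma,t),\sin\vartheta(\sigma,t))\,\de\sigma$ (centered so $\int_0^L\xi\,\de s=0$); then $\xi\in X_1$, $\xi(\cdot,0)$ has the shape of $\chi_\init$, and $\xi(\cdot,T)$ is a straight segment. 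Theorem \ref{0001}, with initial datum chosen so that $r(0)\circ\xi(\cdot,0)=\chi_\init$, yields a motion $\chi^{(1)}=r\circ\xi\in X_1$ solving \eqref{105} with $\chi^{(1)}(\cdot,0)=\chi_\init$ and $\chi^{(1)}(\cdot,T)$ a straight segment $\Sigma_\init$ in some position. The fourth maneuver is the time-reverse of the analogous construction for $\chi_\fin$. To verify the two disks condition along these homotopies one shrinks $\rho$ if necessary and appeals to the bound on the curvature: as $\lambda$ runs from $0$ to $1$, $|\vartheta(s,t)|\le|\vartheta_\init(s)|$ and $|\vartheta'(s,t)|\le|\vartheta_\init'(s)|$, so one can invoke Lemma \ref{110} (after a preliminary rotation bringing the tangent angles into $(-\pi/4,\pi/4)$, on short enough arc-length blocks, then patching) — this is where I expect most of the technical care to be needed.

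The remaining bookkeeping is to match the intermediate segments: the straight configuration $\Sigma_\init$ produced by step (1) sits at some barycenter and some orientation; the rotation maneuver of Section \ref{rotation} turns it about its barycenter to the orientation of $\Sigma_\fin$; the translation maneuver of Section \ref{translation} slides it along its axis to the barycenter of $\Sigma_\fin$; then step (4) bends it into $\chi_\fin$. Along the rotation and translation maneuvers the swimmer is (a slight perturbation of) a straight segment, so its curvature is as small as we like and the two disks condition holds trivially for the fixed $\rho$. I expect the main obstacle to be the straightening steps: producing the interpolating shape history \emph{and simultaneously} keeping the whole swept family within the two disks condition for a uniform radius, since the naive linear interpolation of the angle need not keep the curve embedded without an auxiliary decomposition of $[0,L]$ into pieces on which the tangent angle stays in a small cone; handling this patching, and the rigid-motion matching across the four pieces via the initial conditions in Theorem \ref{0001}, is the crux.
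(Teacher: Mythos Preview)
Your overall decomposition --- straighten $\chi_\init$ to a segment, move the segment to the right place, then un-straighten to $\chi_\fin$ --- is exactly the paper's strategy, and the concatenation and rate-independence bookkeeping are fine. Two points, however, are genuine gaps.

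\textbf{Segment-to-segment transfer.} Your sequence ``rotate to the orientation of $\Sigma_\fin$, then translate along the axis'' does not reach $\Sigma_\fin$ in general: after rotating $\Sigma_\init$ about its barycenter to the direction of $\Sigma_\fin$, the vector joining the two barycenters need not lie on that axis, so axial translation alone cannot close the gap. The paper uses \emph{rotation--translation--rotation}: first rotate so that the axis points towards the target barycenter, then translate along the axis to reach it, then rotate again to the final orientation. This is easy to fix, but as written your chain does not connect.

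\textbf{Straightening while keeping the two disks condition with the given $\rho$.} The theorem requires the two disks condition to hold for every $t$ with the \emph{same} radius $\rho$ from the hypotheses, so your ``shrink $\rho$ if necessary'' is not allowed. Your angle-interpolation $\vartheta(s,t)=(1-\lambda(t))\vartheta_\init(s)$ does keep the curvature bounded by $1/\rho$, but the two disks condition is a global embeddedness statement (Proposition~\ref{757}: the tubular map $h$ must be injective), and scaling the tangent angle has no reason to preserve it --- for a curve that winds substantially (total turning beyond $\pi/2$) you cannot put $\vartheta$ into $(-\pi/4,\pi/4)$ by a rigid rotation, so Lemma~\ref{110} is not available, and your ``short arc-length blocks, then patching'' is not a proof: you would have to straighten blocks one at a time while keeping the rest of the curve fixed and still globally embedded with tube radius $\rho$, which is precisely the hard part. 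The paper avoids this entirely by a different, geometric straightening: it pulls one end of the swimmer along its tangent and unwinds the curve from that end (Figures~\ref{f11}--\ref{f12}), a maneuver that manifestly preserves the tubular neighborhood of radius $\rho$ throughout. That idea --- straightening by pulling/unwinding rather than by interpolating the angle --- is the missing ingredient in your proposal.
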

\begin{proof}
To construct $\chi$, we divide the interval $[0,T]$ into three intervals $[0,\frac13T]$, $[\frac13T,\frac23T]$, and $[\frac23T,T]$. 
In the first interval we straighten $\chi_\init$\,, i.e., we construct $\chi$, satisfying the force and torque balance \eqref{105} 
and the two disks condition with radius $\rho$ on $[0,\frac13T]$, such that $\chi(s,0)=\chi_\init(s)$ and $\chi(s,\frac13T)=\Sigma _\init(s)$ 
for every $s\in[0,L]$, where $\Sigma_\init$ is the arc length parametrization of a suitable segment of length $L$, depending on $\chi_\init$\,.

The same construction, with time reversed, shows that there exists a segment $\Sigma_\fin$, depending on $\chi_\fin$\,, that can be
transferred onto $\chi_\fin$\,, i.e., there exists $\chi$ satisfying the force and torque balance \eqref{105} and the the two disks condition with radius $\rho$ on $[\frac23T,T]$, such that $\chi(s,\frac23T)=\Sigma _\fin(s)$ and $\chi(s,T)=\chi_\fin(s)$ for every $s\in[0,L]$.
 
Since, in general, $\Sigma _\fin$ does not coincide with $\Sigma _\init$\,, we use the interval $[\frac13T,\frac23T]$ to transfer $\Sigma _\init$ onto $\Sigma _\fin$\,.

We now describe the construction of $\chi$ on $[0,\frac13T]$. 
First of all, it is possible to find $\xi\in X_1$ such that  $\xi(s,0)=\chi_\init(s)$  for every $s\in[0,L]$ and $s\mapsto \xi(s,\frac13T)$ is affine on $[0,L]$. 
It is also possible to obtain that $\xi(\cdot,t)$ satisfies the two disks condition with radius $\rho>0$ for every $t\in[0,\frac13T]$.

The last requirement can be fulfilled in the following way. If at one end of the swimmer there is enough room, we pull it along the tangent and unwind it from its original shape obtaining a straight configuration, as illustrated in Fig.~\ref{f11}.
\begin{figure}[h]
\centering
\includegraphics[scale=1]{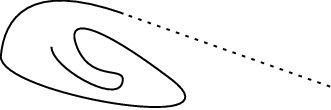}
\caption[Straightening the swimmer I.]{Straightening the swimmer I. The dashed line is the straightened configuration.}
\label{f11}
\end{figure}
\par If this is not the case, then we operate as in Fig.~\ref{f12}: the unwinding is achieved by pinching a point with maximal $x_1$-coordinate and pulling it to the right respecting the two disks condition.
\begin{figure}[ht]
\centering
\includegraphics[scale=.75]{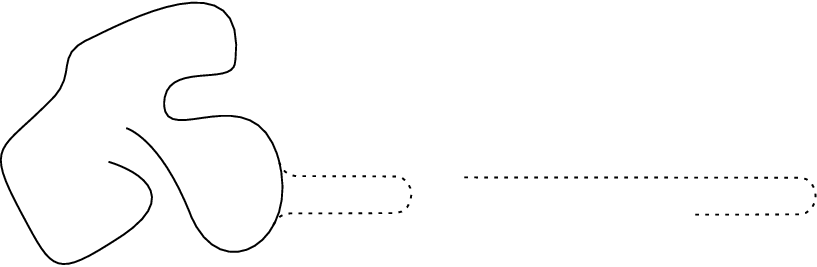}
\caption[Straightening the swimmer II.]{Straightening the swimmer II. The dashed lines represent the intermediate phases of the stretching procedure.}
\label{f12}
\end{figure}

We now compose $\xi$ with a time dependent rigid motion and define $\chi$ on $[0,L]\times[0,\frac13T]$ by \eqref{1001}. Clearly the curve
$\chi(\cdot,t)$ continues to satisfy the two disks condition with radius $\rho$ for every $t\in [0,\frac13T]$. Moreover the function
$\Sigma _\init(s):=\chi(s,\frac13T)$ is affine and $|\Sigma _\init'(s)|=1$ for every $s\in[0,L]$.
The vector $x$ and the rotation $R$ are chosen so that the equation of motion \eqref{1027} is satisfied (this is possible thanks to Theorem \ref{0001}), so that $\chi$ satisfies the force and torque balance \eqref{105}  in $[0,\frac13T]$.

Note that, while the affine map $\xi(\cdot,\frac13T)$ can be chosen freely, the corresponding map $\Sigma _\init$ depends on the superimposed rigid motion, which, in turn, depends on the data of the problem. Therefore, in this construction the location of the segment $\Sigma _\init$ cannot be prescribed.

On $[\frac23T, T]$ the function $\chi$ is defined in a similar way. To transfer $\Sigma _\init$ into $\Sigma _\fin$ in the time interval $[\frac13T,\frac23T]$ we show that, for a straight swimmer, it is possible to produce self-propelled motions achieving any prescribed translation along its axis and any prescribed rotation about its barycenter.

\par To summarize, the whole control process is organized as in Fig. \ref{fig2}.
\begin{equation*}
\chi_\init(\cdot)\xrightarrow{\text{straightening}}\Sigma_\init(\cdot)\xrightarrow{\text{rotation, translation, rotation}} \Sigma_\fin(\cdot)\xrightarrow{\text{straightening$^{-1}$}}\chi_\fin(\cdot).
\end{equation*}

\begin{figure}[ht]
\begin{center}
\centering{
\labellist
	\hair 2pt
	\pinlabel $\chi_\fin$ at 580 50 
	\pinlabel $\chi_\init$ at 18 70 
	\pinlabel $\Sigma_\fin$ at 395 100 
	\pinlabel $\Sigma_\init$ at 200 0 
	\pinlabel $\downarrow$ at 90 62 
	\pinlabel $\longrightarrow$ at 460 90 
	\pinlabel $\longrightarrow$ at 260 45 
	\endlabellist				
\includegraphics[scale=.6]{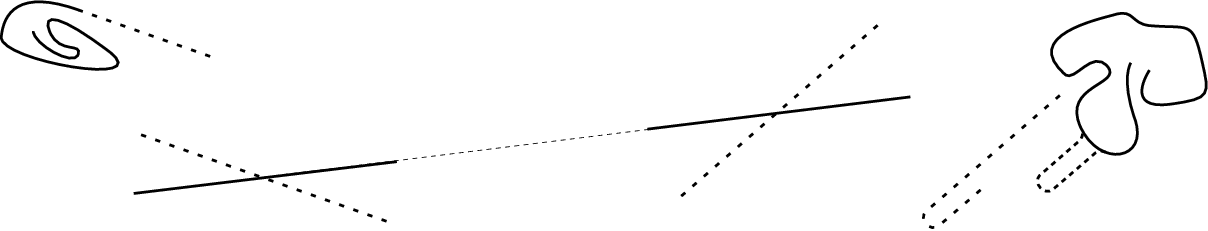}\\ 
}
\end{center}
\caption{Sketch of the control process.}
\label{fig2}
\end{figure}

\subsection{Translation}\label{translation}
\par In this subsection we describe how to translate a straight swimmer along its axis: since the problem is rate independent (see Remark \ref{rem}), it is not restrictive to work in the time interval $[0,1]$. The motion of the swimmer is obtained through the translation along the swimmer itself of a localized bump. In order to get a rectilinear motion, we have to assume that the bump satisfies some symmetry properties \eqref{0002}.

\par We can assume that the swimmer lies initially on the $x_1$-axis and that the initial parame\-trization is $\Sigma_\init(s)=se_1$\,.
Given $a\in\R{}$, we describe a self-propelled motion that transfers the segment $\Sigma_\init(s)=se_1$ into the segment $\Sigma_\fin(s)=(a+s)e_1$\,, $s\in[0,L]$. It is not restrictive to assume $a>0$.

\par As before, the motion will be first described through a function $\xi\in X_1$ satisfying the two disks condition with the prescribed radius $\rho$. Then, $\xi$ will be composed with a time dependent rigid motion in order to obtain $\chi$ satisfying also the force and torque balance.

\par For simplicity, we assume $\xi(0,t)=0$ for every $t\in[0,1]$. The function $\xi(s,t)$ will be better described by means of the angle $\vartheta(s,t)$ between its tangent line and the positive $x_1$-axis. This leads to the formula
\begin{equation}\label{121}
\xi(s,t)=\int_0^s \vec{\cos\vartheta(\sigma,t)}{\sin\vartheta(\sigma,t)}\de \sigma.
\end{equation}

\par The function $\vartheta(s,t)$ will be defined using a smooth function $\vartheta_0\colon \R{}\to(-\pi/4,\pi/4)$, with $\max_{s}|\vartheta_0'(s)|\leq1/\rho$ and support 
\begin{equation}\label{ts12}
\spt\vartheta_0=[-\ell,\ell],\qquad\text{where $\ell\in(0,L/2)$.}
\end{equation}
Given two Lipschitz continuous control functions $u_1:[0,1]\to[-1,1]$ and $u_2:[0,1]\to[\ell,L-\ell]$, we define 
\begin{equation}\label{120}
\vartheta(s,t):=u_1(t)\vartheta_0(s-u_2(t)),
\end{equation}
for every $(s,t)\in[0,L]{\times}[0,1]$.
Notice that for all $t\in[0,1]$ the function $\vartheta(\cdot,t)$ has support contained in $[u_2(t)-\ell,u_2(t)+\ell]$, which in turn is contained in $[0,L]$, and the curve $\xi(\cdot,t)$ satisfies the two disks condition with radius $\rho$ (see Lemma \ref{110}).
The parameters $u_1(t)$ and $u_2(t)$ represent the amplitude of the bump and the location of its midpoint at time $t$.

\par It is convenient to introduce the function 
\begin{equation*}\label{r100}
\xi_0(s,u_1,u_2):=\int_0^s \vec{\cos(u_1\vartheta_0(\sigma-u_2))}{\sin(u_1\vartheta_0(\sigma-u_2))}\,\de \sigma,
\end{equation*}
so that 
\begin{equation}\label{r101}
\xi(s,t)=\xi_0(s,u_1(t),u_2(t)).
\end{equation}
It follows that
\begin{equation*}\label{r102}
\xi'(s,t)=\xi_0'(s,u_1(t),u_2(t)),
\end{equation*}
where
\begin{equation}\label{r103}
\xi_0'(s,u_1,u_2)=\frac{\partial\xi_0}{\partial s}(s,u_1,u_2)=\vec{\cos(u_1\vartheta_0(s-u_2))}{\sin(u_1\vartheta_0(s-u_2))}.
\end{equation}
Therefore,
\begin{equation*}\label{r104}
\begin{split}
K_\xi(s,t) = & C_\tau\xi'(s,t)\otimes\xi'(s,t)+C_\nu(J\xi'(s,t))\otimes(J\xi'(s,t)) \\
= & C_\tau\xi_0'(s,u_1(t),u_2(t))\otimes\xi_0'(s,u_1(t),u_2(t)) \\  
& +C_\nu(J\xi_0'(s,u_1(t),u_2(t)))\otimes(J\xi_0'(s,u_1(t),u_2(t))).
\end{split}
\end{equation*}

\par It is convenient to introduce
\begin{equation}\label{r105}
K_0(s,u_1,u_2):=C_\tau\xi_0'(s,u_1,u_2)\otimes\xi_0'(s,u_1,u_2)+C_\nu(J\xi_0'(s,u_1,u_2))\otimes(J\xi_0'(s,u_1,u_2)),
\end{equation}
so that
\begin{equation}\label{r106}
K_\xi(s,t)=K_0(s,u_1(t),u_2(t)).
\end{equation}
We also have
\begin{equation*}\label{r107}
\dot\xi(s,t)=\xi_{0,1}(s,u_1(t),u_2(t))\dot u_1(t)+\xi_{0,2}(s,u_1(t),u_2(t))\dot u_2(t),
\end{equation*}
where
\begin{equation*}\label{r108}
\xi_{0,i}(s,u_1,u_2):=\frac{\partial\xi_0}{\partial u_i}(s,u_1,u_2).
\end{equation*}
A simple computation leads to
\begin{equation}\label{r109}
\xi_{0,1}(s,u_1,u_2)=\int_0^s \vec{-\sin(u_1\vartheta_0(\sigma-u_2))}{\cos(u_1\vartheta_0(\sigma-u_2))}\vartheta_0(\sigma-u_2)\,\de \sigma,
\end{equation}
\begin{equation}\label{r110}
\begin{split}
\xi_{0,2}(s,u_1,u_2)= & \int_0^s \vec{\sin(u_1\vartheta_0(\sigma-u_2))}{-\cos(u_1\vartheta_0(\sigma-u_2))}u_1\vartheta_0'(\sigma-u_2)\,\de \sigma \\
= & -\int_0^s \frac{\de}{\de\sigma}\vec{\cos(u_1\vartheta_0(\sigma-u_2))}{\sin(u_1\vartheta_0(\sigma-u_2))}\,\de \sigma \\
= & \vec{1-\cos(u_1\vartheta_0(s-u_2))}{-\sin(u_1\vartheta_0(s-u_2))},
\end{split}
\end{equation}
for every $s\in[0,L]$, $u_1\in[-1,1]$, and $u_2\in[\ell,L-\ell]$. Note that in the previous computation we have used the fact that $\vartheta_0(-u_2)=0$, since $\spt\vartheta_0\subseteq[-\ell, \ell]$.

\par Finally, we make the following symmetry assumption on the angle function $\vartheta_0$:
\begin{subequations}\label{0002}
\begin{eqnarray}
&&\vartheta_0\text{ is odd in $[-\ell,\ell]$}; \label{0002a} \\
&&\vartheta_0\text{ is even in $[-\ell,0]$ and in $[0,\ell]$}; \label{0002b} \\
&&\vartheta_0\text{ is odd in $[-\ell,-\ell/2]$, $[-\ell/2,0]$, $[0,\ell/2]$, and in $[\ell/2,\ell]$}. \label{0002c}
\end{eqnarray}
\end{subequations}
We say that a function $u\colon[a,b]\to\R{}$ is said to be even (resp.\@ odd) in $[a,b]$ if $u(x)=u(a+b-x)$ (resp.\@ $u(x)=-u(a+b-x)$) for every $x\in[a,b]$.
\par Figure \ref{f3} shows an example of a bump $\xi_0(s,t)$ whose angle function $\vartheta_0$ enjoys the properties listed above; notice that the parity of the vertical component of $\xi_0$ is reversed with respect to that of $\vartheta_0$\,.
\begin{figure}[htbp]
\centering
\includegraphics[scale=.5]{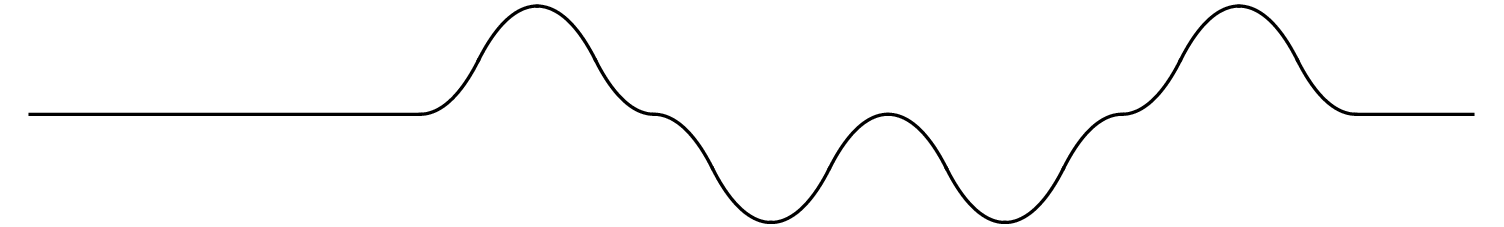}
\caption{Image of the function $\xi_0(s,t)$ for the translational motion.}
\label{f3}
\end{figure}

To exploit these symmetry properties, we repeatedly use the following lemma, whose elementary proof is omitted.
\begin{lemma}\label{220}
Let $u\colon[a,b]\to\R{}$ be an even (resp.\@ odd) function and let $c:=(a+b)/2$ be the middle point of $[a,b]$. Then, the integral function $U(x):=\int_c^x u(s)\,\de s$ is odd (resp.\@ even) in $[a,b]$.
\end{lemma}

\par We now compose $\xi$ defined in \eqref{r101} with a time dependent rigid motion and define $\chi$ on $[0,L]{\times}[0,1]$ by \eqref{1001}. 
The vector $x(t)$ and the rotation $R(t)$ are chosen so that $x(0)=0$ and $R(0)=I$ and the force and torque balance \eqref{105} is satisfied by $\chi$ at every time. 
We want to prove that this is possible with 
\begin{equation}\label{r111}
x_2(t)=0\quad\text{and}\quad R(t)=I, \qquad\text{for all $t\in[0,1]$},
\end{equation}
by a suitable choice of $x_1(t)$.
We shall see that this result follows from the symmetry assumptions \eqref{0002} and does not depend on the particular choice of the control functions $u_1(t),u_2(t)$.

\par Note that for every $t\in[0,1]$ we have $\spt\vartheta(\cdot,t)=[u_2(t)-\ell,u_2(t)+\ell]$. 
Since $\vartheta(s,t)=0$ for $s\in[0,u_2(t)-\ell]$, we obtain from \eqref{121} that $\xi(s,t)=se_1$ for $s\in[0,u_2(t)-\ell]$. 
Similarly, since $\vartheta(s,t)=0$ for every $s\in[u_2(t)+\ell,L]$, \eqref{121} implies that in this interval $\xi(\cdot,t)$ is the arc length parametrization of a segment parallel to the $x_1$-axis.
Therefore, the curve $\xi(\cdot,t)$ is the union of two segments and a connecting bump, corresponding to the restriction of the curve $\xi(\cdot,t)$ to the interval $[u_2(t)-\ell,u_2(t)+\ell]$. 

\par Notice that \eqref{0002a} yields $\int_{-\ell}^\ell \sin(u_1(t)\vartheta_0(s))\,\de s=0$, therefore, \eqref{121} and \eqref{120} imply that for every $t\in[0,1]$ the curve $s\mapsto\xi(s,t)$, $s\in[u_2(t)+\ell,L]$, parametrizes a segment lying on the $x_1$-axis.
Moreover, by a change of variables we have
$$\xi(u_2(t)+\ell,t)-\xi(u_2(t)-\ell,t)=\int_{-\ell}^\ell \vec{\cos(u_1(t)\vartheta_0(s))}{\sin(u_1(t)\vartheta_0(s))}\de s=
\left(\int_{-\ell}^\ell\cos(u_1(t)\vartheta_0(s))\,\de s\right)e_1.$$
These two remarks imply that
\begin{equation*}\label{904}
\xi(s,t)=\left(s-2l+\int_{-\ell}^\ell\cos(u_1(t)\vartheta_0(s))\,\de s\right)e_1,\qquad\text{for }u_2(t)+\ell\leq s\leq L.
\end{equation*}

\par Using \eqref{1001} and \eqref{r111}, the expression for $\chi$ reads $\chi(s,t)=x_1(t)e_1+\xi(s,t)$. 
From this we get $\chi'(s,t)=\xi'(s,t)$ and $\dot\chi(s,t)=\dot x_1(t)e_1+\dot\xi(s,t)$. It follows that the matrix $K_\chi$ defined in \eqref{0000} satisfies
\begin{equation*}\label{1111}
K_\chi(s,t)=K_\xi(s,t)=C_\tau\xi'(s,t)\otimes\xi'(s,t)+C_\nu(J\xi'(s,t))\otimes(J\xi'(s,t)).
\end{equation*}
The linear densities of force and moment (see \eqref{104}) are then given by
\begin{equation*}
\begin{split}
-f(s,t) = &\,\, K_\chi(s,t)\dot\chi(s,t)=\dot x_1(t)K_\xi(s,t)e_1+K_\xi(s,t)\dot\xi(s,t) \\
-m(s,t) = &\,\, \dot x_1(t)\langle K_\xi(s,t)e_1,J\xi(s,t)\rangle+\dot x_1(t)x_1(t)\scp{K_\xi(s,t)e_1}{e_2} \\
& \,\, +\langle K_\xi(s,t)\dot\xi(s,t),J\xi(s,t)\rangle+x_1(t)\langle K_\xi(s,t)\dot\xi(s,t),e_2\rangle
\end{split}
\end{equation*}
By plugging this information in \eqref{105}, we get
\begin{equation}\label{139}
\dot x_1(t)\int_0^L K_\xi(s,t)e_1\,\de s=-\int_0^L K_\xi(s,t)\dot\xi(s,t)\,\de s,
\end{equation}
where the right-hand side is $F^\sh(t)$ defined in \eqref{006}, and
\begin{equation}\label{921}
\begin{split}
\dot x_1(t)&\int_0^L \langle K_\xi(s,t)e_1,J\xi(s,t)\rangle\de s+\dot x_1(t)x_1(t)\int_0^L \scp{K_\xi(s,t)e_1}{e_2}\de s= \\
& -\int_0^L\langle K_\xi(s,t)\dot\xi(s,t),J\xi(s,t)\rangle\,\de s-x_1(t)\int_0^L\langle K_\xi(s,t)\dot\xi(s,t),e_2\rangle\,\de s.
\end{split}
\end{equation}
To solve simultaneously these equations for the unknown $\dot x_1(t)$, we will show that the second components of the integrals in \eqref{139} are zero, that the first component of the integral in the left-hand side in \eqref{139} is non zero, and  that all integrals in \eqref{921} are zero.

\par Let us start from the second components of \eqref{139}. 
For the left-hand side, we have
\begin{equation*}\label{ts10}
\begin{split}
\scp{K_\xi(s,t)e_1}{e_2}= & \scp{K_0(s,u_1(t),u_2(t))e_1}{e_2} \\
= & (C_\tau-C_\nu) \cos(u_1(t)\vartheta_0(s-u_2(t)))\sin(u_1(t)\vartheta_0(s-u_2(t))) 
\end{split} 
\end{equation*}
so it suffices to show that for every $u_1,u_2$
\begin{equation}\label{ts11}
\int_0^L \cos(u_1\vartheta_0(s-u_2))\sin(u_1\vartheta_0(s-u_2))\,\de s=0.
\end{equation}
By \eqref{ts12} and by changing variables $\sigma=s-u_2$, \eqref{ts11} becomes
\begin{equation*}\label{ts13}
\int_{-\ell}^{\ell} \cos(u_1\vartheta_0(\sigma))\sin(u_1\vartheta_0(\sigma))\,\de\sigma=0,
\end{equation*}
which holds true since the integrand is an odd function in $[-\ell,\ell]$, thanks to \eqref{0002a}.

\par For the second component of the right-hand side of \eqref{139}, we have
\begin{equation*}\label{ts14}
\begin{split}
\langle K_\xi(s,t)&\dot\xi(s,t),e_2\rangle= C_\tau \dot u_1(t)\sin(u_1(t)\vartheta_0(s-u_2(t)))\cdot \\
& \cdot\left[-\cos(u_1(t)\vartheta_0(s-u_2(t)))\int_0^s\sin(u_1(t)\vartheta_0(\sigma-u_2(t)))\vartheta_0(\sigma-u_2(t))\,\de\sigma\right. \\
& \quad\left. +\sin(u_1(t)\vartheta_0(s-u_2(t)))\int_0^s \cos(u_1(t)\vartheta_0(\sigma-u_2(t)))\vartheta_0(\sigma-u_2(t))\,\de\sigma\right] \\
& - C_\nu \dot u_2(t) \cos(u_1(t)\vartheta_0(s-u_2(t)))\sin(u_1(t)\vartheta_0(s-u_2(t))).
\end{split}
\end{equation*}
Therefore, we need to prove that for any $u_1,u_2$
\begin{equation}\label{ts15}
\int_0^L \sin(2u_1\vartheta_0(s-u_2))\left[\int_0^s \sin(u_1\vartheta_0(\sigma-u_2))\vartheta_0(\sigma-u_2)\,\de\sigma\right]\de s=0,
\end{equation}
\begin{equation}\label{ts16}
\int_0^L \sin^2(u_1\vartheta_0(s-u_2))\left[\int_0^s \cos(u_1\vartheta_0(\sigma-u_2))\vartheta_0(\sigma-u_2)\,\de\sigma\right]\de s=0.
\end{equation}
Again by \eqref{ts12} and by changing variables as before, \eqref{ts15} and \eqref{ts16} reduce to
\begin{equation}\label{ts17}
\int_{-\ell}^\ell \sin(2u_1\vartheta_0(s))\left[\int_{-\ell}^s \sin(u_1\vartheta_0(\sigma))\vartheta_0(\sigma)\,\de\sigma\right]\de s=0,
\end{equation}
\begin{equation}\label{ts18}
\int_{-\ell}^\ell \sin^2(u_1\vartheta_0(s))\left[\int_{-\ell}^s \cos(u_1\vartheta_0(\sigma))\vartheta_0(\sigma)\,\de\sigma\right]\de s=0.
\end{equation}
Since $\sin(2u_1\vartheta_0(s))$ is odd in $[-\ell,\ell]$ by \eqref{0002a}, equation \eqref{ts17} is equivalent to
\begin{equation}\label{ts19}
\int_{-\ell}^\ell \sin(2u_1\vartheta_0(s))\left[\int_{0}^s \sin(u_1\vartheta_0(\sigma))\vartheta_0(\sigma)\,\de \sigma\right]\de s=0.
\end{equation}
By \eqref{0002a} and by Lemma \ref{220} the function $s\mapsto\int_0^s \sin(u_1\vartheta_0(\sigma))\vartheta_0(\sigma)\,\de \sigma$ is odd in $[-\ell,\ell]$, therefore \eqref{ts19} is equivalent to
\begin{equation}\label{ts20}
\int_{0}^\ell \sin(2u_1\vartheta_0(s))\left[\int_{0}^s \sin(u_1\vartheta_0(\sigma))\vartheta_0(\sigma)\,\de \sigma\right]\de s=0,
\end{equation}
since its integrand is even in $[-\ell,\ell]$. 
Since $\sin(2u_1\vartheta_0(s))$ is even in $[0,\ell]$ by \eqref{0002b}, we have
\begin{equation*}\label{ts22}
\int_{0}^\ell \sin(2u_1\vartheta_0(s))\,\de s=2\int_0^{\ell/2} \sin(2u_1\vartheta_0(s))\,\de s=0,
\end{equation*}
where the last equality follows from the fact that $\sin(2u_1\vartheta_0(s))$ is odd in $[0,\ell/2]$ by \eqref{0002c}. 
This equality implies that \eqref{ts20} reduces to
\begin{equation}\label{ts21}
\int_{0}^\ell \sin(2u_1\vartheta_0(s))\left[\int_{\ell/2}^s \sin(u_1\vartheta_0(\sigma))\vartheta_0(\sigma)\,\de \sigma\right]\de s=0.
\end{equation}
By \eqref{0002b}, $\sin(2u_1\vartheta_0(s))$, and $\sin(u_1\vartheta_0(s))\vartheta_0(s)$ are even in $[0,\ell]$, hence the function $s\mapsto\int_{\ell/2}^s \sin(u_1\vartheta_0(\sigma))\vartheta_0(\sigma)\,\de \sigma$ is odd in $[0,\ell]$ by Lemma \ref{220}. This implies that \eqref{ts21} holds, since its integrand is odd in $[0,\ell]$. This concludes the proof of \eqref{ts17}.

\par  To prove \eqref{ts18} we notice that in the function $s\mapsto\cos(u_1\vartheta_0(s))\vartheta_0(s)$ is even in $[-\ell,0]$ by \eqref{0002b}. Hence,
\begin{equation}\label{ts23}
\int_{-\ell}^0 \cos(u_1\vartheta_0(\sigma))\vartheta_0(\sigma)\,\de \sigma=2\int_{-\ell/2}^{0} \cos(u_1\vartheta_0(\sigma))\vartheta_0(\sigma)\,\de \sigma=0,
\end{equation}
where the last equality follows from the fact that the function $s\mapsto\cos(u_1\vartheta_0(s))\vartheta_0(s)$ is odd in $[-\ell/2,0]$ by \eqref{0002c}. This implies that \eqref{ts18} is equivalent to
\begin{equation}\label{ts24}
\int_{-\ell}^\ell \sin^2(u_1\vartheta_0(s))\left[\int_{0}^s \cos(u_1\vartheta_0(\sigma))\vartheta_0(\sigma)\,\de \sigma \right]\de s=0.
\end{equation}
By \eqref{0002a}, the function $\cos(u_1\vartheta_0(\sigma))\vartheta_0(\sigma)$ is odd in $[-\ell,\ell]$, and therefore the function $s\mapsto\int_0^s \cos(u_1\vartheta_0(\sigma))\vartheta_0(\sigma)\,\de \sigma$ is even in $[-\ell,\ell]$ by Lemma \ref{220}. Since also $\sin^2(u_1\vartheta_0(s))$ is even in $[-\ell,\ell]$ by \eqref{0002a}, \eqref{ts24} is equivalent to
\begin{equation*}
\int_{0}^\ell \sin^2(u_1\vartheta_0(s))\left[\int_{0}^s \cos(u_1\vartheta_0(\sigma))\vartheta_0(\sigma)\,\de \sigma \right]\de s=0,
\end{equation*}
which, by \eqref{ts23}, is equivalent to
\begin{equation}\label{ts25}
\int_{0}^\ell \sin^2(u_1\vartheta_0(s))\left[\int_{\ell/2}^s \cos(u_1\vartheta_0(\sigma))\vartheta_0(\sigma)\,\de \sigma \right]\de s=0.
\end{equation}
By \eqref{0002b}, the function $\cos(u_1\vartheta_0(\sigma))\vartheta_0(\sigma)$ is even in $[0,\ell]$, and therefore the function $s\mapsto\int_{\ell/2}^s \cos(u_1\vartheta_0(\sigma))\vartheta_0(\sigma)\,\de \sigma$ is odd in $[0,\ell]$ by Lemma \ref{220}. Since also $\sin^2(u_1\vartheta_0(s))$ is even in $[0,\ell]$ by \eqref{0002b}, \eqref{ts25} holds because its integrand is odd in $[0,\ell]$. This concludes the proof of \eqref{ts18}.
Therefore, we have proved that the second component in \eqref{139} vanishes.

\par By the results just proved, \eqref{921} reduces to
\begin{equation}\label{ts26}
\dot x_1(t)\int_0^L \langle K_\xi(s,t)e_1,J\xi(s,t)\rangle\de s= -\int_0^L\langle K_\xi(s,t)\dot\xi(s,t),J\xi(s,t)\rangle\,\de s
\end{equation}
To prove that the left-hand side is zero, by \eqref{r105} and \eqref{r106} it is enough to show that
\begin{equation*}\label{ts27}
\int_0^L \scp{\xi_0'(s,t)}{e_1}\scp{\xi_0'(s,t)}{J\xi_0(s,t)}\,\de s=\int_0^L \scp{J\xi_0'(s,t)}{e_1}\scp{J\xi_0'(s,t)}{J\xi_0(s,t)}\,\de s=0,
\end{equation*}
which in turn is valid if we prove that, for any $u_1,u_2$ (after recalling \eqref{ts12} and performing the usual change of variables $s-u_2\to s$)
\begin{equation*}\label{ts28}
\int_{-\ell}^\ell \sin(2u_1\vartheta_0(s))\left[\int_{-\ell}^s \cos(u_1\vartheta_0(\sigma))\,\de \sigma\right]\de s=0,
\end{equation*}
\begin{equation*}\label{ts29}
\int_{-\ell}^\ell \sin^2(u_1\vartheta_0(s))\left[\int_{-\ell}^s \sin(u_1\vartheta_0(\sigma))\,\de \sigma \right]\de s=0,
\end{equation*}
\begin{equation*}\label{ts30}
\int_{-\ell}^\ell \cos^2(u_1\vartheta_0(s))\left[\int_{-\ell}^s \sin(u_1\vartheta_0(\sigma))\,\de \sigma \right]\de s=0.
\end{equation*}
To prove these equalities we can argue as in \eqref{ts17} and \eqref{ts18}.

\par To prove that the right-hand side of \eqref{ts26} is zero, besides the equalities already proved, we have to show that \begin{equation*}
\int_{-\ell}^\ell \cos(u_1\vartheta_0(s))\left[\int_{-\ell}^s \sin(u_1\vartheta_0(\sigma))\,\de \sigma\right]\de s=0,
\end{equation*}
\begin{equation*}
\int_{-\ell}^\ell \sin(u_1\vartheta_0(s))\left[\int_{-\ell}^s \cos(u_1\vartheta_0(\sigma))\,\de \sigma\right]\de s=0,
\end{equation*}
\begin{equation*}
\int_{-\ell}^\ell \cos^2(u_1\vartheta_0(s))\left[\int_{-\ell}^s \sin(u_1\vartheta_0(\sigma))\vartheta_0(\sigma)\,\de \sigma\right]\!\! \left[\int_{-\ell}^s \sin(u_1\vartheta_0(\sigma))\,\de \sigma\right]\de s=0,
\end{equation*}
\begin{equation*}
\int_{-\ell}^\ell \sin^2(u_1\vartheta_0(s))\left[\int_{-\ell}^s \sin(u_1\vartheta_0(\sigma))\vartheta_0(\sigma)\,\de \sigma\right]\!\! \left[\int_{-\ell}^s \sin(u_1\vartheta_0(\sigma))\,\de \sigma\right]\de s=0,
\end{equation*}
\begin{equation*}
\int_{-\ell}^\ell \!\!\sin(2u_1\vartheta_0(s)) \!\left[\int_{-\ell}^s \!\!\sin(u_1\vartheta_0(\sigma))\vartheta_0(\sigma)\,\de \sigma\right] \!\!\left[\int_{-\ell}^s \!\!\cos(u_1\vartheta_0(\sigma))\,\de \sigma\right]\de s=0,
\end{equation*}
\begin{equation*}
\int_{-\ell}^\ell \!\!\sin(2u_1\vartheta_0(s)) \!\left[\int_{-\ell}^s \!\!\cos(u_1\vartheta_0(\sigma))\vartheta_0(\sigma)\,\de \sigma\right] \!\!\left[\int_{-\ell}^s \!\!\sin(u_1\vartheta_0(\sigma))\,\de \sigma\right]\de s=0,
\end{equation*}
\begin{equation*}
\int_{-\ell}^\ell \sin^2(u_1\vartheta_0(s))\left[\int_{-\ell}^s \cos(u_1\vartheta_0(\sigma))\vartheta_0(\sigma)\,\de \sigma\right]\!\! \left[\int_{-\ell}^s \cos(u_1\vartheta_0(\sigma))\,\de \sigma\right]\de s=0,
\end{equation*}
\begin{equation*}
\int_{-\ell}^\ell \cos^2(u_1\vartheta_0(s))\left[\int_{-\ell}^s \cos(u_1\vartheta_0(\sigma))\vartheta_0(\sigma)\,\de \sigma\right]\!\! \left[\int_{-\ell}^s \cos(u_1\vartheta_0(\sigma))\,\de \sigma\right]\de s=0.
\end{equation*}
This can be done as in the previous proofs, using the symmetry assumptions \eqref{0002} together with Lemma \ref{220}.

\par We still need to verify that the first component in the left-hand side of \eqref{139} does not vanish.
Indeed, by using \eqref{r103}, \eqref{r105}, and \eqref{r106},
\begin{equation*}\label{ts38}
\begin{split}
\int_0^L \scp{K_\xi(s,t)e_1}{e_1}\de s = & \int_{-\ell}^\ell [C_\tau\cos^2(u_1\vartheta_0(s))+C_\nu\sin^2(u_1\vartheta_0(s))]\de s,
\end{split}
\end{equation*}
which is clearly greater than zero.
Therefore, \eqref{139} can be written in the following way
\begin{equation}\label{ts39}
\dot x_1(t)=\frac{F^\sh(t)}{b(u_1(t))}=a_1(u_1(t))\dot u_1(t)+a_2(u_1(t))\dot u_2(t),
\end{equation}
where we have set
\begin{subequations}\label{ts40}
\begin{eqnarray}
a_i(u_1) &:=& -\frac{b_i(u_1)}{b(u_1)}, \label{ts40a} \\
b(u_1) &:=& \int_0^L\scp{K_0(s,u_1,u_2)e_1}{e_1}\de s, \label{ts40b} \\
b_i(u_1) &:=& \int_0^L \scp{K_0(s,u_1,u_2)\xi_{0,i}(s,u_1,u_2)}{e_1}\de s, \label{ts40c}
\end{eqnarray}
\end{subequations}
since the right-hand sides in \eqref{ts40} are in fact independent of $u_2$.
Indeed, an easy computation recalling \eqref{r105}, \eqref{r109}, and \eqref{r110} leads to the following expressions
\begin{subequations}\label{e003}
\begin{eqnarray}
b(u_1)& =& 2C_\tau \ell +(C_\nu-C_\tau)\int_{-\ell}^\ell \sin^2(u_1\vartheta_0(s))\,\ds, \label{e003a} \\
b_1(u_1)& =& -C_\tau\int_{-\ell}^\ell \left[\int_{-\ell}^s\vartheta_0(\sigma)\sin(u_1\vartheta_0(\sigma))\dsigma\right]\ds \nonumber\\
&& -(C_\nu-C_\tau)\int_{-\ell}^\ell\sin^2(u_1\vartheta_0(s)) \left[\int_{-\ell}^s\vartheta_0(\sigma)\sin(u_1\vartheta_0(\sigma))\dsigma\right]\ds \label{e003b} \\
&& +\frac{C_\tau-C_\nu}{2}\int_{-\ell}^\ell\sin(2u_1\vartheta_0(s)) \left[\int_{-\ell}^s\vartheta(\sigma)\cos(u_1\vartheta_0(\sigma))\dsigma\right]\ds \nonumber \\
b_2(u_1)&=& 2C_\tau \ell -C_\tau \!\! \int_{-\ell}^\ell \!\! \cos(u_1\vartheta_0(s))\ds{+} (C_\nu-C_\tau) \!\! \int_{-\ell}^\ell \!\! \sin^2(u_1\vartheta_0(s))\ds. \label{e003c}
\end{eqnarray}
\end{subequations}

\par Notice that from \eqref{139} and \eqref{ts39}
\begin{equation}\label{ts50}
F^\sh(t)=[b_1(u_1(t))\dot u_1(t)+b_2(u_1(t))\dot u_2(t)]e_1.
\end{equation}

\par Assume that $u_1(0)=0$. Since $x_1(0)=0$ and $\xi(0,0)=0$ (recall that we have assumed $x(0)=0$ and $\xi(0,t)=0$ for all $t\in[0,1]$), the initial condition $\chi(\cdot,0)=\Sigma_\init(\cdot)$ is satisfied. 

\par Assume also that $u_1(1)=0$. Since $\xi(0,1)=0$, the final condition $\chi(\cdot,1)=\Sigma_\fin(\cdot)$ is satisfied provided $x_1(1)=a$. Therefore we have to show that we can choose the Lipschitz controls $u_1:[0,1]\to[-1,1]$ and $u_2:[0,1]\to[\ell,L-\ell]$ in such a way that the corresponding solution of \eqref{ts39} satisfying the initial conditions $x_1(0)=0$ satisfies also the final condition $x_1(1)=a$.

\par Equation \eqref{ts39} shows that $x_1(1)$ is the integral of the differential form $a_1(u_1)\de u_1+a_2(u_1)\de u_2$ along the oriented curve $t\mapsto\vec{u_1(t)}{u_2(t)}$.

\par We shall prove that there exists $\delta\in(0,1)$ such that
\begin{equation}\label{ts5}
\frac{\de a_2}{\de u_1}(0)=0\qquad\text{and}\qquad
\frac{\de a_2}{\de u_1}(u_1)<0,\qquad\text{for $0<u_1<\delta$.}
\end{equation}

For every $\gamma\leq\delta$, let us consider the rectangle $\cR_\gamma:=[0,\gamma]{\times}[\ell,L-\ell]$. 
If $(u_1^\gamma(t),u_2^\gamma(t))$ is a clockwise parametrization of $\partial\cR_\gamma$ with $u_1^\gamma(0)=u_1^\gamma(1)=0$, then the corresponding solutions $x_1^\gamma$ to \eqref{ts39} with initial condition $x_1^\gamma(0)=0$ satisfy the condition 
\begin{equation*}\label{ts6}
0<x_1^\gamma(1)\leq x_1^\delta(1).
\end{equation*}
If $0<a\leq x_1^\delta(1)$, by continuity there exists $\gamma\in(0,\delta]$ such that $x_1^\gamma(1)=a$.
If $a>x_1^\delta(1)$, then we can write $a=nb$, with $n\in\mathbb{N}$ and $b\in(0,x_1^\delta(1)]$, and fix $\gamma$ such that $x_1^\gamma(1)=b$. 
We then extend $u_1$ and $u_2$ by $1$-periodicity, and we achieve the equality $x_1(1)=a$ by choosing as control $(u_1(t),u_2(t))=(u_1^\gamma(nt),u_2^\gamma(nt))$.

\par We remark that this elementary argument, based on the non-integrability of the differential form $a_1(u_1)\de u_1+a_2(u_1)\de u_2$, is a particular case of a well know result in Geometric Control Theory, namely Chow's Theorem, see, e.g., \cite[Theorem 3.18]{Coron}.

\par We now prove \eqref{ts5}. By \eqref{e003a} and \eqref{e003c} it is easy to compute 
\begin{equation}\label{ts42}
\begin{split}
\frac{\de b}{\de u_1}(u_1) = & (C_\nu-C_\tau)\int_{-\ell}^\ell \sin(2u_1\vartheta_0(s))\vartheta_0(s)\,\ds, \\
\frac{\de b_2}{\de u_1}(u_1)= & C_\tau\int_{-\ell}^\ell\sin(u_1\vartheta_0(s))\vartheta_0(s)\ds+ (C_\nu-C_\tau)\int_{-\ell}^\ell\sin(2u_1\vartheta_0(s))\vartheta_0(s)\ds,
\end{split}
\end{equation}
\begin{equation}\label{ts43}
b(0)=2C_\tau\ell,\qquad b_2(0)=0,\qquad 
\frac{\de b}{\de u_1}(0)=0,\qquad \frac{\de b_2}{\de u_1}(0)=0,
\end{equation}
which implies the first equality in \eqref{ts5}, since
\begin{equation}\label{ts41}
b^2(u_1)\frac{\de a_2}{\de u_1}(u_1)=-\frac{\de b_2(u_1)}{\de u_1}b(u_1)+b_2(u_1)\frac{\de b(u_1)}{\de u_1}.
\end{equation}
To establish the result on the sign of $\frac{\de a_2}{\de u_1}(u_1)$, it is enough to study the derivative of the right-hand side of \eqref{ts41}, which is equal to 
\begin{equation}\label{ts44}
-\frac{\de^2 b_2}{\de u_1^2}(u_1)b(u_1)+b_2(u_1)\frac{\de^2 b}{\de u_1^2}(u_1).
\end{equation}
From \eqref{ts42} we obtain
\begin{equation*}\label{ts45}
\begin{split}
\frac{\de^2 b_2}{\de u_1^2}(u_1)= & C_\tau\int_{-\ell}^\ell\cos(u_1\vartheta_0(s))\vartheta_0^2(s)\ds+ 2(C_\nu-C_\tau)\int_{-\ell}^\ell\cos(2u_1\vartheta_0(s))\vartheta_0^2(s)\ds,
\end{split}
\end{equation*}
and therefore
\begin{equation*}\label{ts46}
\frac{\de^2 b_2}{\de u_1^2}(0)=  (2C_\nu-C_\tau)\int_{-\ell}^\ell \vartheta_0^2(s)\ds.
\end{equation*}
Evaluating \eqref{ts44} at $u_1=0$ gives (recall the second equality in \eqref{ts43})
\begin{equation*}\label{ts47}
\sign\left(\frac{\de^2 a_2}{\de u_1^2}(0)\right)=\sign\left(-2C_\tau \ell(2C_\nu-C_\tau)\int_{-\ell}^\ell \vartheta_0^2(s)\ds\right)<0,
\end{equation*}
which proves the second part of claim \eqref{ts5}.

\subsection{Rotation}\label{rotation}
\par In this subsection we describe how to rotate a straight swimmer about its center in the time interval $[0,1]$.  This will be obtained in three steps. In the first one we deform symmetrically the initial segment into the shape in Fig.\@ \ref{fR1}, with two parallel straight terminal parts; by symmetry the deformation process will produce a rotation of an angle $\varphi_0$ (that we will not estimate) about the midpoint. In the second step we propagate bumps on the rectilinear parts as described below in order to achieve a rotation of a prescribed angle $\varphi$. In the third step, we straighten back the now rotated configuration in Fig.\@ \ref{fR1} into a straight one by reverting the process in step one: this will produce a rotation of angle $-\varphi_0$ about the midpoint, so that at the end of the process the segment will be rotated by the angle $\varphi$.
\begin{figure}[htbp]
\centering
\includegraphics[scale=.5]{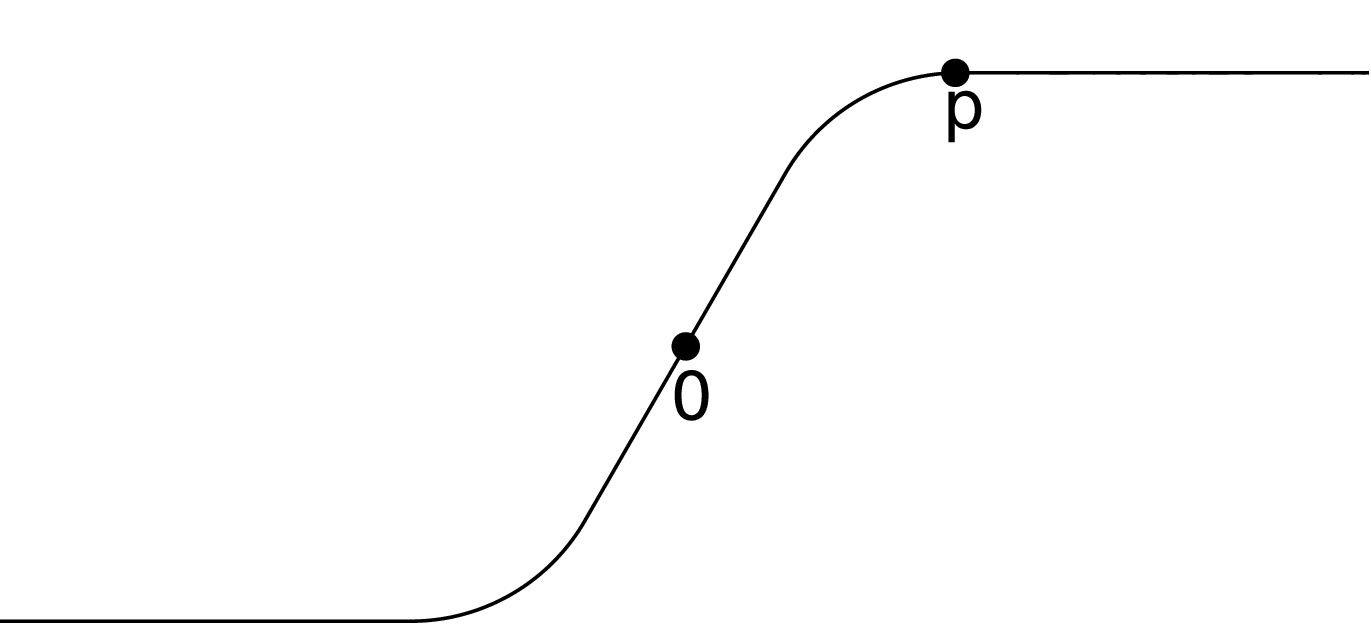}
\caption{Rotation: the configurations $\xi(s,1/3)=\xi(s,2/3)$.}
\label{fR1}
\end{figure}

\par Without loss of generality, in this section it is convenient to assume that the length of the swimmer is $2L$ and to parametrize all curves in the interval $[-L,L]$. We take $\Sigma_\init(s)=se_1$ and $\Sigma_\fin(s)=se(\varphi)$, for $s\in[-L,L]$, where $e(\varphi)=(\cos\varphi,\sin\varphi)$, $\varphi$ being the angle of rotation. 
As before, the motion will be first described through a function $\xi\in X_1$ satisfying the two disks condition with radius $\rho$. 
Then, we will consider the function $\chi(s,t)$ defined by \eqref{1001}, where $x(t)$ and $R(t)$ satisfy the equation of motion \eqref{1027}. The initial and final conditions on $\chi$ are
\begin{equation}\label{711}
\chi(s,0)=se_1\,,\qquad \chi(s,1)=se(\varphi),
\end{equation}
for all $s\in[-L,L]$. 

\par We also assume that $\xi(s,0)=se_1$ and that $\xi$ satisfies
\begin{equation}\label{707}
\xi(s,t)=-\xi(-s,t)
\end{equation}
for every $s\in[-L,L]$ and for every $t\in[0,1]$. It follows that
\begin{equation}\label{170}
\xi'(s,t)=\xi'(-s,t),\qquad\qquad\dot\xi(s,t)=-\dot\xi(-s,t),
\end{equation}
which implies that the force density $f(s,t)$ in \eqref{104} is odd with respect to $s$, so that $F(t)=\int_{-L}^{L} f(s,t)\,\de s=0$ for all $t\in[0,1]$.

\par The quantities introduced in \eqref{709a}, \eqref{006}, and \eqref{1028} are now defined by integration over the interval $[-L,L]$. The symmetry properties \eqref{707} and \eqref{170} imply also that the vector $b(t)$ introduced in \eqref{709} and the vector $F^\sh(t)$ defined in \eqref{006} vanish. As a consequence, the vectors $\bar b(t)$ and $v(t)$ introduced in \eqref{1028} are zero and $\bar c(t)=1/c(t)$. Therefore, the equations of motion \eqref{1027} read $\dot x(t)=0$ and 
\begin{equation}\label{712}
\dot\theta(t)=\omega(t):=\frac{M^\sh(t)}{c(t)},
\end{equation}
where $\theta(t)$ is the angle of the rotation $R(t)$.
Together with the initial conditions at time $t=0$, this implies that $x(t)=0$ for every $t\in[0,1]$, $\theta(0)=0$, and
\begin{equation*}
\chi(s,t)=R(t)\xi(s,t).
\end{equation*}
Therefore, the final condition in \eqref{711} is equivalent to
\begin{equation*}
\theta(1)\equiv\varphi \mod 2\pi.
\end{equation*}

\par The first step will take place in the time interval $[0,1/3]$. 
The curve $\xi(\cdot,1/3)$ is the one represented in Fig.\@ \ref{fR1}. The main feature of this curve, besides being odd, is that $\xi(s,1/3)=p+(s-L/2)e_1$, for $s\in[L/2,L]$, where $p=\xi(L/2,1/3)$ and  $p_2>0$. 
The angle $\varphi_0$ mentioned at the beginning of the section is then defined by $\varphi_0=\theta(1/3)$.

\par The second step will take place in the time interval $[1/3,2/3]$. 
As in the case of pure translations, the overall rotation of angle $\varphi$ will be achieved by iterating the cyclic motions described below. 

\par During each cycle, we deform the rectilinear parts of the swimmer with the same bumps we used for the translation, see Fig.\@ \ref{fR2}. 
\begin{figure}[htbp]
\centering
\includegraphics[scale=.3]{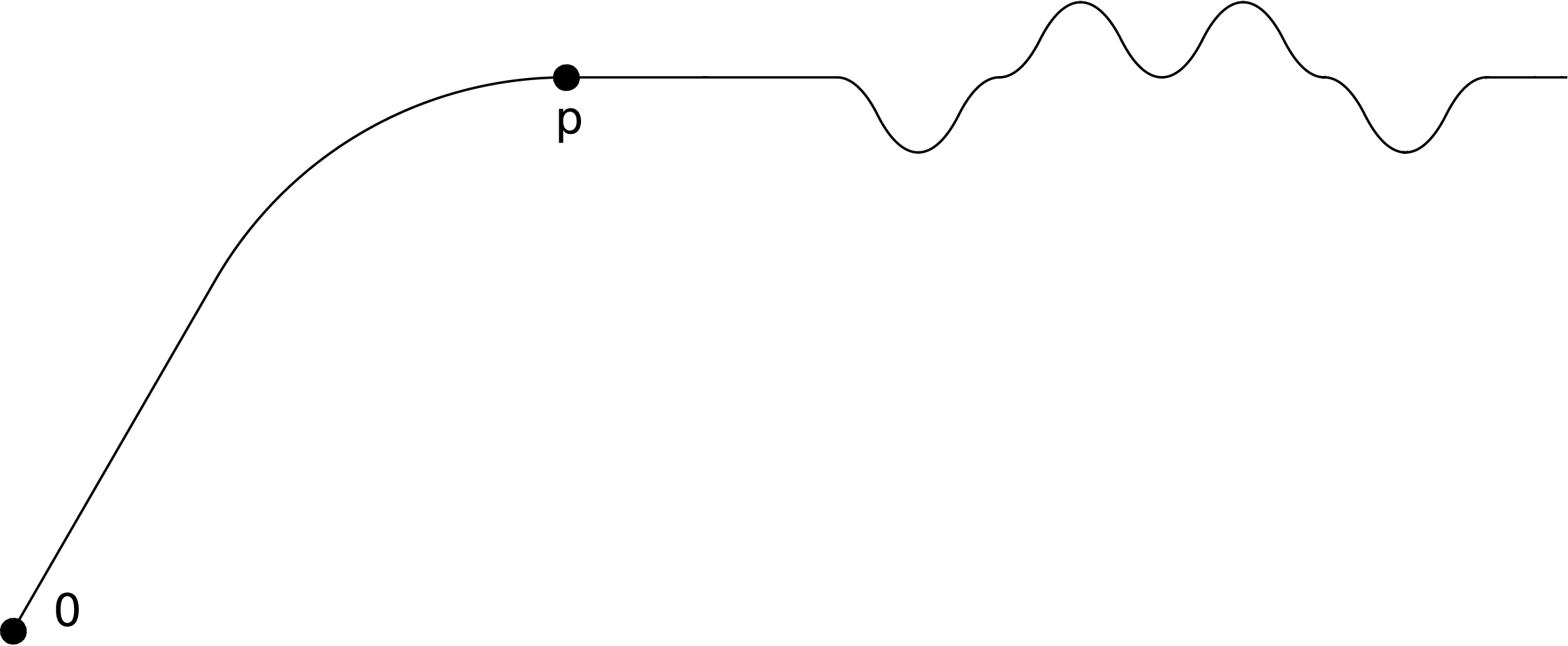}
\caption{Rotation: the right half of the swimmer is shown, $s\in[0,L]$.}
\label{fR2}
\end{figure}
They will be created at the ends of the swimmer, will travel towards its center, and will be destroyed before entering the curvilinear part.

\par To describe the geometry of the bumps, we use the angle function $\vartheta(s,t)$ considered in \eqref{120}, where now the Lipschitz controls $u_1$ and $u_2$ take values in $[-1,1]$ and $[L/2+\ell,L-\ell]$, with $\ell\in(0,L/4)$.
The corresponding function $\xi(s,t)$ is defined by $\xi(s,t)=\xi(s,1/3)$ for $s\in[0,L/2]$ and $t\in[1/3,2/3]$, and by $\xi(s,t)=\xi_0(s,u_1(t),u_2(t))$, where
\begin{equation*}\label{ts70}
\xi_0(s,u_1,u_2):=p+\int_{L/2}^s \vec{\cos(u_1\vartheta_0(\sigma-u_2))}{\sin(u_1\vartheta_0(\sigma-u_2))}\de \sigma.
\end{equation*}
\par Using \eqref{ts50}, the force $F^\sh(t)$ generated during the translation of the bumps is given by 
\begin{equation*}\label{997}
F_\rig^\sh(t)=
[b_1(u_1(t))\dot u_1(t)+b_2(u_1(t))\dot u_2(t)]
e_1,\qquad\text{for every $t\in[1/3,2/3]$.}
\end{equation*}
By the symmetries introduced in \eqref{707}, the global torque defined in \eqref{006} can be computed as
\begin{equation*}\label{999}
\begin{split}
M^\sh(t)= & -2\int_{L/2}^L \langle J(\xi(s,t)-p),K_\xi(s,t)\dot\xi(s,t)\rangle\,\de s+2\scp{Jp}{F_\rig^\sh(t)} \\ 
= & -2p_2[b_1(u_1(t))\dot u_1(t)+b_2(u_1(t))\dot u_2(t)]
\end{split}
\end{equation*}
where the integral, which represents the torque with respect to the point $p$, vanishes, as explained in Section \ref{translation}.

\par To compute $\dot\theta(t)$ in \eqref{712}, it is convenient to write
\begin{equation*}\label{ts71}
c(t)=b^*(u_1(t)),
\end{equation*}
where, by \eqref{709b}, \eqref{707}, and \eqref{170}
\begin{equation*}\label{ts72}
b^*(u_1):=2C_\tau\int_{L/2}^L \scp{\xi_0'(s,u_1,u_2)}{J\xi_0(s,u_1,u_2)}^2\de s+2C_\nu\int_{L/2}^L \scp{\xi_0'(s,u_1,u_2)}{\xi_0(s,u_1,u_2)}^2\de s;
\end{equation*}
it is easy to see that the right-hand side above is independent of $u_2$.
Equation \eqref{712} becomes now
\begin{equation*}\label{ts73}
\dot\theta(t)=a_1^*(u_1(t))\dot u_1(t)+ a_2^*(u_1(t))\dot u_2(t),
\end{equation*}
where
\begin{equation*}\label{ts74}
a_i^*(u_1):=\frac{b_i(u_1)}{b^*(u_1)}.
\end{equation*}

\par Now, with the same strategy used for proving
\eqref{ts5}, we can show that there exists $\delta\in(0,1)$ such that
\begin{equation*}\label{ts75}
\frac{\de a_2^*}{\de u_1}(0)=0\qquad\text{and}\qquad
\frac{\de a_2^*}{\de u_1}(u_1)<0,\qquad\text{for $0<u_1<\delta$.}
\end{equation*}
We can now conclude step two as in Section \ref{translation} and we get $\theta(2/3)=\theta(1/3)+\varphi=\varphi_0+\varphi$.

\par The third step will take place in the time interval $[2/3,1]$. We now define $\xi(s,t)=\xi(s,1-t)$ for $s\in[-L,L]$ and for $t\in[2/3,1]$. Since this motion is the same as in step one with time reversed, the rotation angle $\theta(1)-\theta(2/3)$ will be equal to $-\varphi_0$\,, hence $\theta(1)=\varphi$.
\end{proof}

\section{Existence of an optimal swimming strategy}

\par 
In this section we prove Theorem \ref{010} about the existence of an energetically optimal swimming strategy. The result is achieved by proving that a minimum problem for the power expended \eqref{007} has a solution.

Let us recall the definition of power expended:
\begin{equation}\label{200}
\cP(\chi):=\int_0^L\!\!\int_0^T \scp{-f(s,t)}{\dot\chi(s,t)}\,\de s\de t=\int_0^L\!\!\int_0^T \scp{K_\chi(s,t)\dot\chi(s,t)}{\dot\chi(s,t)}\,\de s\de t.
\end{equation}
Up to a change of coordinates, it is possible to represent $K_\chi(s,t)$ in diagonal form, with entries $C_\tau$ and $C_\nu$\,. Since $0<C_\tau<C_\nu$\,, the matrix $K_\chi(s,t)$ is positive definite and its lower eigenvalue is $C_\tau$. It follows that
\begin{equation}\label{1008}
\cP(\chi)
\geq C_\tau \int_0^L\!\!\int_0^T \norm{\dot\chi(s,t)}^2\de s\de t.
\end{equation}

\par For every $\rho>0$ let $X_1^\rho$ be the set of all functions $\chi \in X_1$ (see \eqref{836}) such that 
for every $t\in[0,T]$ the curve $\chi(\cdot,t)$ satisfies the external disks condition with radius $\rho$ (see Definition \ref{755}).
\begin{theorem}\label{010} Let $\rho>0$ and let $\chi_\init$\,, $\chi_\fin\in H^2(0,L;\R2)$, with  $|\chi_\init'(s)|=|\chi_\fin'(s)|=1$ for every $s\in[0,L]$. Assume that $\chi_\init$ and $\chi_\fin$ satisfy the two disks condition with radius $\rho$ (see Definition \ref{755}). Then the minimum problem
\begin{equation}\label{180}
\min\{\cP(\chi):\chi\in X_1^\rho, \text{ \emph{\eqref{105} holds}},\ \chi(\cdot,0)=\chi_\init(\cdot),\ \chi(\cdot,T)=\chi_\fin(\cdot)\}
\end{equation}
has a solution.
\end{theorem}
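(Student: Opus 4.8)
The plan is to run the direct method of the calculus of variations on the functional $\cP$ over the admissible class in \eqref{180}. First I would check the class is nonempty: Theorem~\ref{300} produces a $\chi\in X_1$ that satisfies \eqref{105}, the boundary conditions, and the two disks condition with radius $\rho$ at every time, hence $\chi\in X_1^\rho$. Take then a minimizing sequence $(\chi_n)$. Since $\cP(\chi_n)$ is bounded, \eqref{1008} gives $\norma{\dot\chi_n}_{\Lp2{}(0,T;\Lp2{}(0,L))}\leq C$; because $\chi_n(\cdot,0)=\chi_\init$ is fixed, \eqref{3101} yields a uniform bound on $\norma{\chi_n}_{\Cl0{}([0,T];\Lp2{}(0,L))}$, hence on $\supess_t\norma{\chi_n(\cdot,t)}_{\Lp2{}(0,L)}$. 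On the other hand, the two disks condition forces the curvature of $\chi_n(\cdot,t)$ to be bounded by $1/\rho$ (this is exactly the estimate $\kappa\leq1/\rho$ obtained in the proof of Proposition~\ref{757}), and since $\norm{\chi_n'}\equiv1$ this means $\norm{\chi_n''(s,t)}\leq1/\rho$, so $\norma{\chi_n''(\cdot,t)}_{\Lp2{}(0,L)}\leq\sqrt L/\rho$. Thus $(\chi_n)$ is bounded in $X$.

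By the separability and reflexivity of $\Hp2{}(0,L)$, up to a subsequence (not relabelled) $\chi_n\overset{*}{\wto}\chi$ in $\Lp\infty{}(0,T;\Hp2{}(0,L))$ and $\dot\chi_n\wto\dot\chi$ in $\Lp2{}(0,T;\Lp2{}(0,L))$, so $\chi\in X$. Moreover, the Aubin--Lions--Simon compactness theorem, applied with $\Hp2{}(0,L)\ccc\Cl1{}([0,L])\subset\Lp2{}(0,L)$ and the above bounds, gives along a further subsequence $\chi_n\to\chi$ strongly in $\Cl0{}([0,T];\Cl1{}([0,L]))$. In particular $\norm{\chi'(s,t)}=1$ for all $(s,t)$, hence $\chi\in X_1$, and the boundary conditions $\chi(\cdot,0)=\chi_\init$, $\chi(\cdot,T)=\chi_\fin$ pass to the limit.

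Next I would verify that $\chi$ lies in the admissible class. The two disks condition is \emph{closed} under $\Cl1{}$-convergence of the curve: condition (a) of Definition~\ref{755} is equivalent to the family of (non-strict) inequalities $\norm{\xi(\sigma)-\xi(s)\mp\rho J\xi'(s)}\geq\rho$ for all $s,\sigma\in[0,L]$, and condition (b) to analogous inequalities for the half disks $B^\pm$, whose centers $\xi(0),\xi(L)$ and axes $\xi'(0),\xi'(L)$ depend continuously on the curve --- passing to a further subsequence one may assume that the side chosen for each half disk stabilizes. Passing to the limit in these inequalities shows that each $\chi(\cdot,t)$ satisfies the two disks condition with radius $\rho$, so $\chi\in X_1^\rho$. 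For the force and torque balance, note that by \eqref{0000} and $\chi_n'\to\chi'$ in $\Cl0{}([0,L]{\times}[0,T])$ one has $K_{\chi_n}\to K_\chi$ uniformly on $[0,L]{\times}[0,T]$, with $\norm{K_{\chi_n}}\leq C_\nu$. Fixing $\varphi\in\Lp2{}(0,T;\R2)$ and using the symmetry of $K_{\chi_n}$,
\begin{equation*}
0=\int_0^T\scp{\int_0^L K_{\chi_n}(s,t)\dot\chi_n(s,t)\,\de s}{\varphi(t)}\,\de t=\int_0^T\!\!\int_0^L\scp{\dot\chi_n(s,t)}{K_{\chi_n}(s,t)\varphi(t)}\,\de s\,\de t;
\end{equation*}
since $K_{\chi_n}\varphi\to K_\chi\varphi$ strongly in $\Lp2{}((0,L){\times}(0,T))$ while $\dot\chi_n\wto\dot\chi$ weakly there, the right-hand side converges to $\int_0^T\scp{\int_0^L K_\chi\dot\chi\,\de s}{\varphi}\,\de t$, and arbitrariness of $\varphi$ gives \eqref{105a} for a.e.\ $t$. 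Equation \eqref{105b} follows in the same way, testing against a scalar $\psi\in\Lp2{}(0,T)$ and using in addition $J\chi_n\to J\chi$ uniformly, so that $K_{\chi_n}(J\chi_n)\psi\to K_\chi(J\chi)\psi$ strongly in $\Lp2{}((0,L){\times}(0,T))$.

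Finally, lower semicontinuity: writing $\scp{K_{\chi_n}\dot\chi_n}{\dot\chi_n}=\scp{K_{\chi_n}(\dot\chi_n-\dot\chi)}{\dot\chi_n-\dot\chi}+2\scp{K_{\chi_n}(\dot\chi_n-\dot\chi)}{\dot\chi}+\scp{K_{\chi_n}\dot\chi}{\dot\chi}$, the first term is $\geq0$ ($K_{\chi_n}$ is positive semidefinite), the second tends to $0$ because $K_{\chi_n}\dot\chi\to K_\chi\dot\chi$ strongly in $\Lp2{}$ and $\dot\chi_n-\dot\chi\wto0$, and the third tends to $\cP(\chi)$ by dominated convergence; hence $\cP(\chi)\leq\liminf_n\cP(\chi_n)$, and since $\chi$ is admissible, $\chi$ solves \eqref{180}. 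I expect the main obstacle to be the extraction of the a-priori $\Hp2{}$-bound: this is precisely where the two disks condition (the key hypothesis of the theorem) is used, via the curvature bound $\norm{\chi_n''}\leq1/\rho$, which is what upgrades the plain $X$-bound to compactness in $\Cl0{}([0,T];\Cl1{}([0,L]))$ and thereby allows passing to the limit in the coefficient $K_{\chi_n}$ of the nonlinear constraint \eqref{105}; the closedness of $X_1^\rho$ under this convergence is a second, related delicate point.
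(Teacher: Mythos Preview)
Your argument is correct and follows the same overall strategy as the paper (direct method, with the key a~priori $H^2$-bound coming from the curvature estimate $|\chi_n''|\leq1/\rho$ supplied by the two disks condition), but you implement two of the steps with different tools. For compactness, the paper works pointwise in $t$: from weak $H^1(0,T;L^2)$ convergence it extracts, for each fixed $t$, weak $H^2(0,L)$ and hence strong $C^1([0,L])$ convergence of $\chi_k(\cdot,t)$, and then uses dominated convergence to get $K_{\chi_k}\to K_\chi$ strongly in $L^2((0,L)\times(0,T))$. You instead invoke Aubin--Lions--Simon with $H^2\subset\subset C^1\subset L^2$ to obtain strong convergence in $C^0([0,T];C^1([0,L]))$ directly; this is cleaner and gives the stronger uniform convergence of $K_{\chi_n}$ that you then use. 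For lower semicontinuity, the paper cites the Ioffe--Olech theorem, whereas you give the elementary three-term decomposition $\scp{K_{\chi_n}\dot\chi_n}{\dot\chi_n}=\scp{K_{\chi_n}(\dot\chi_n-\dot\chi)}{\dot\chi_n-\dot\chi}+2\scp{K_{\chi_n}\dot\chi}{\dot\chi_n-\dot\chi}+\scp{K_{\chi_n}\dot\chi}{\dot\chi}$, which is self-contained and perfectly adequate here since $K_{\chi_n}\to K_\chi$ strongly. Your treatment of the closedness of the two disks condition (rewriting (a) as closed inequalities and handling the side choice in (b) by pigeonhole along a subsequence) is also a bit more explicit than the paper's, which simply asserts that one can ``pass to the limit'' in the condition.
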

\begin{proof} 
We first observe that the set of motions $\chi$ on which we are mimimizing is nonempty by Theorem \ref{300}.
Let us consider a minimizing sequence $(\chi_k)_k$. By \eqref{1008} there exists a
constant $M<+\infty$ such that,
\begin{equation}\label{1801}
\int_0^L\!\!\int_0^T \norm{\dot\chi_k(s,t)}^2\de s\de t\leq M
\end{equation}
for every $k$.
Notice that the external disks condition with radius $\rho$ gives the estimate
\begin{equation}\label{1802}
\norm{\chi_k^{\prime\prime}(s,t)}\leq 1/\rho
\end{equation}
for every $t\in [0,T]$ and for a.e.\ $s\in [0,L]$.

We now show that $\chi_k(\cdot,t)$ is bounded in $L^2(0,L)$ uniformly with respect to $k$ and $t$.
Since $\chi_k(\cdot,0)=\chi_\init(\cdot)$, for every $s\in [0,L]$ we have
$$
\norm{\chi_k(s,t)}^2\leq2\norm{\chi_\init(s)}^2+2T\int_0^T \norm{\dot\chi_k(s,t)}^2\de t.
$$
From this inequality and from \eqref{1801} we get
\begin{equation}\label{1803}
\sup_{t\in[0,T]} \norma{\chi_k(\cdot,t)}_{\Lp2{}(0,L)}^2\leq 2\norma{\chi_\init}_{\Lp2{}(0,L)}^2+2TM.
\end{equation}
By an elementary interpolation inequality we deduce from \eqref{1802} and \eqref{1803} that
\begin{equation}\label{1804}
\sup_{t\in[0,T]} \norma{\chi_k(\cdot,t)}_{\Hp2{}(0,L)}\leq C,
\end{equation}
for a suitable constant $C<+\infty$ independent of $k$.

By \eqref{1801} and \eqref{1803} the sequence  $(\chi_k)_k$ is bounded in $\Hp1{}(0,T;\Lp2{}(0,L))$. Therefore there exist a subsequence, not relabeled, and a function $\chi:[0,L]{\times}[0,T]\to\R2 $
such that
\begin{eqnarray}
&\label{1811}
\chi\in\Hp1{}(0,T;\Lp2{}(0,L)),
\\
&\label{181}
\chi_k\wto\chi\quad\text{weakly in $\Hp1{}(0,T;\Lp2{}(0,L))$.}
\end{eqnarray}
Since $\Hp1{}(0,T;\Lp2{}(0,L))$ is continuously embedded into $\Cl0{}([0,T];\Lp2{}(0,L))$ and for every
$t\in[0,T]$ the function
$\xi\mapsto \xi(t)$ is continuous from $\Cl0{}([0,T];\Lp2{}(0,L))$ into $\Lp2{}(0,L)$, from \eqref{181} we deduce
that
$$
\chi_k(\cdot,t)\wto\chi(\cdot,t)\qquad\text{weakly in $\Lp2{}(0,L)$}
$$
for every $t\in[0,T]$. Then \eqref{1804} gives that
\begin{subequations}\label{18**}
\begin{eqnarray}
&\label{1810}
\chi(\cdot,t)\in \Hp2{}(0,L)\quad\text{for every $t\in[0,T]$,}
\\
&\label{1805}
\chi_k(\cdot,t)\wto\chi(\cdot,t)\quad\text{weakly in $\Hp2{}(0,L)$ for every $t\in[0,T]$,}
\\
&\label{1806}\displaystyle
\sup_{t\in[0,T]} \norma{\chi(\cdot,t)}_{\Hp2{}(0,L)}\leq C.
\end{eqnarray}
\end{subequations}

By \eqref{1811} and \eqref{1806} we have $\chi\in X$. Since the embedding of $\Hp2{}(0,L)$ into $\Cl1{}([0,L])$ is compact, from \eqref{1805} we deduce that $\chi_k(\cdot,t)\to\chi(\cdot,t)$ strongly in $\Cl1{}([0,L])$ for every $t\in[0,T]$. This allows us to pass to the limit in the equalities $\norm{\chi_k^\prime(s,t)}=1$, $\chi_k(s,0)=\chi_\init(s)$, $\chi_k(s,T)=\chi_\fin(s)$, and in the external disks condition with radius $\rho$. We conclude that $\chi\in X_1^\rho$ and that $\chi(s,0)=\chi_\init(s)$, and $\chi(s,T)=\chi_\fin(s)$.

\par Let us verify that also the force and torque balance \eqref{105} passes to the limit. Equality \eqref{105a} for $\chi_k$ reads
\begin{equation}\label{1809}
\int_0^L K_{\chi_k}(s,t)\dot\chi_k(s,t)\,\de s=0.
\end{equation}
Since $\chi_k'(\cdot,t)$ converges  to $\chi'(\cdot,t)$ strongly in $\Cl1{}([0,L])$ for every $t\in[0,T]$, by \eqref{0000} and  \eqref{1806} we can apply the Dominated Convergence Theorem and we obtain 
\begin{equation}\label{1807}
K_{\chi_k}\to K_\chi \quad\text{strongly in $\Lp2{}(0,T;\Lp2{}(0,L))$.}
\end{equation}
By \eqref{181} we have also
\begin{equation}\label{1808}
\dot\chi_k\wto\dot\chi\quad\text{weakly in $\Lp2{}(0,T;\Lp2{}(0,L))$.}
\end{equation}
By \eqref{200} and by the Ioffe-Olech semicontinuity theorem (see, for instance, \cite[Theorem 2.3.1]{Butt})
we have
\begin{equation}\label{1813}
\cP(\chi)\leq\liminf_{k\to+\infty}\cP(\chi_k).
\end{equation}

Let now $\varphi\in \Cl0c(0,T)$ be a test function. By \eqref{1809}-\eqref{1808} we have
$$0=\int_0^T\!\!\int_0^L \varphi(t)K_{\chi_k}(s,t)\dot\chi_k(s,t)\,\de s\de t\to
\int_0^T\!\!\int_0^L \varphi(t)K_{\chi}(s,t)\dot\chi(s,t)\,\de s\de t
$$
Since this equality holds for every $\varphi\in \Cl0c(0,T)$, we conclude that
$$\int_0^L K_{\chi}(s,t)\dot\chi(s,t)\,\de s=0$$
for almost every $t\in[0,T]$. This proves 
\eqref{105a}. The proof of \eqref{105b} is analogous.
 Since $(\chi_k)_k$ is a minimizing sequence, we deduce from \eqref{1813} that $\chi$ is a minimizer of 
 \eqref{180}.
\end{proof}

\bigskip

\noindent {\bf Acknowledgments.} This material is based on work supported by the Italian 
Ministry of Education, University, and
Research under the Projects
PRIN 2008
``Variational Problems with
Multiple Scales" and
PRIN 2010-11 ``Calculus of Variations"
and by the European Research Council through the ERC Advanced Grant 340685\_MicroMotility.
The work of M.M.\@ was partially supported by grant FCT-UTA\_CMU/MAT/0005/2009 ``Thin Structures, Homogenization, and Multiphase Problems''.


\begin{thebibliography}{99}

\bibitem{ADSH2011} {\sc F.\@ Alouges, A.\@ DeSimone, L.\@ Heltai}: \emph{Numerical strategies for stroke optimization of axisymmetric microswimmers}. Math.\@ Mod.\@ Meth.\@ Appl.\@ Sci.\@   \textbf{21} (2011), 361--397.

\bibitem{ADSL2008} {\sc F.\@ Alouges, A.\@ DeSimone, A.\@ Lefebvre}: \emph{Optimal strokes for low Reynolds number swimmers: an example}. J.\@ Nonlinear\@ Sci.\@  \textbf{18} (2008), 277--302.

\bibitem{ADSL2009} {\sc F.\@ Alouges, A.\@ DeSimone, A.\@ Lefebvre}: \emph{Optimal strokes for axisymmetric microswimmers}. Eur.\@ Phys.\@ J.\@ E \textbf{28} (2009), 279--284.

\bibitem{AHMDS2012} {\sc M.\@ Arroyo, L.\@ Heltai, D.\@ Mill\'{a}n, A.\@ DeSimone}: \emph{Reverse engineering the euglenoid movement}. PNAS \textbf{109} (2012), 17874--17879.

\bibitem{Bressan2008} {\sc A.\@ Bressan}: \emph{Impulsive control of Lagrangian systems and locomotion in fluids}. Discrete Contin.\@ Dyn.\@ Syst.\@ \textbf{20} (2008), 1--35.

\bibitem{Butt} {\sc G. \@ Buttazzo}: \emph{Semicontinuity, Relaxation and Integral Representation in the Calculus of Variations}. Pitman Res. Notes Math. Ser. 203, Longman, Harlow, 1989.

\bibitem{ChambrionMunnier2011}  {\sc T. Chambrion, A. Munnier}: \emph{Locomotion and control of a self-propelled shape-changing body in a fluid.} J.\@ Nonlinear\@ Sci.\@  \textbf{21} (2011), 325--385.

\bibitem{Childress1981} {\sc S.\@ Childress}: \emph{Mechanics of Swimming and Flying}. Cambridge Studies in Mathematical Biology, vol.\@ 2, Cambridge University Press, Cambridge, 1981.

\bibitem{Coron} {\sc J.-M.\@ Coron}: \emph{Control and nonlinearity}. Mathematical Surveys and Monographs, vol.\@ 136, American Mathematical Society, Providence, RI, USA, 2007.

\bibitem{DMDSM11} {\sc G.\@ Dal Maso, A.\@ DeSimone, M.\@ Morandotti}: \emph{An existence and uniqueness result for the motion of self-propelled micro-swimmers}. SIAM J.\@ Math.\@ Anal.\@ \textbf{43}, pp. 1345-1368.

\bibitem{DSHAL12}  {\sc A.\@ DeSimone, L.\@ Heltai, F.\@ Alouges, A.\@ Lefebvre-Lepot}: \emph{Computing optimal strokes for low Reynolds number swimmers}. In: \emph{Natural locomotion in fluids and on surfaces: swimming, flying, and sliding}, {\sc S. Childress} et al. (eds.), IMA Volumes in Mathematics and its Applications No. 155, Springer Verlag (2012) .

\bibitem{DoCarmo} {\sc M.P.\@ Do Carmo}: \emph{Differential Geometry of Curves and Surfaces}. Prentice Hall Inc., Upper Saddle River, New Jersey, 1976.

\bibitem{FRKHJ2010} {\sc B.M.\@ Friedrich, I.H.\@ Riedel-Kruse, J.\@ Howard, F.\@ J\"ulicher}: \emph{High precision tracking of sperm swimming fine structure provides strong test of resistive force theory}, J.\@ Exp.\@ Biol.\@ \textbf{213} (2010), 1226--1234.

\bibitem{Galdi1999} {\sc G.P.\@ Galdi}: \emph{On the steady self-propelled motion of a body in a viscous incompressible fluid}. Arch.\@ Ration.\@ Mech.\@ Anal.\@ \textbf{148} (1999), 53--88.

\bibitem{GH1955}  {\sc G.\@ Gray,  G.J.\@ Hancock}: \emph{The propulsion of sea-urchin spermatozoa}, J.\@ Exp.\@ Biol., \textbf{32} (1955), 802--814.

\bibitem{Hale1980} {\sc J.K.\@ Hale}, \emph{Ordinary Differential Equations}, second edition, Robert E. Krieger Publishing Co., Huntington, N.Y., 1980.

\bibitem{HappelBrenner1983} {\sc J.\@ Happel, H.\@ Brenner}, \emph{Low Reynolds Number Hydrodynamics with special applications to particulate media}. Martinus Nijhoff Publishers, The Hague, 1983.

\bibitem{JohnsonBrokaw1979} {\sc R. E.\@ Johnson and C.\@ J.\@ Brokaw}: \emph{Flagellar hydrodynamics. A comparison between Resistive-Force Theory and Slender-Body Theory}. Biophys.\@ J.\@ \textbf{25} (1979), 113--127.

\bibitem{KoillerEhlersMontgomery1996}  {\sc J.\@ Koiller, K.\@ Ehlers, R.\@ Montgomery}: \emph{Problems and progress in microswimming}. J.\@ Nonlinear Sci.\@ \textbf{6} (1996), 507--541.

\bibitem{LP09} {\sc E. Lauga},  {\sc T.R. Powers}, \emph{The hydrodynamics of swimming microorganisms}, Rep.\@ Progr.\@ Phys.\@ 72 (2009), no.\@ 9.

\bibitem{Lighthill1952} {\sc M.J.\@ Lighthill}: \emph{On the squirming motion of nearly spherical deformable bodies through liquids at very small Reynolds numbers}. Comm.\@ Pure Appl.\@ Math.\@ \textbf{5} (1952), 109--118.

\bibitem{M11} {\sc M.\@ Morandotti}: \emph{Self-propelled micro-swimmers in a Brinkman fluid}. Journal of Biological Dynamics \textbf{6} Iss.\@ sup1 (2012), pp.\@ 88--103.

\bibitem{PK74} {\sc O.\@  Pironneau}, {\sc D.\@ F.\@ Katz}: \emph{Optimal swimming of flagellated micro-organisms}. J.\@ Fluid Mech.\@ \textbf{66} (1974), 391--415.

\bibitem{Purcell77} {\sc E.\@ M.\@ Purcell}: \emph{Life at low Reynolds number}. American Journal of Physics \textbf{45} (1977), 3--11.

\bibitem{SanMartinTakahashiTucsnak2007} {\sc J.\@ San Mart\'in, T.\@ Takahashi, M.\@ Tucsnak}: \emph{A control theoretic approach to the swimming of microscopic organisms}. Quart.\@ Appl.\@ Math.\@ \textbf{65} (2007), 405--424.

\bibitem{Taylor1951} {\sc G. I.\@ Taylor}: \emph{Analysis of the swimming of microscopic organisms}. Proc.\@ Roy.\@ Soc.\@ London, Ser.\@ A.\@ \textbf{209} (1951), 447--461.

\end{thebibliography}
\end{document}